\numberwithin{equation}{section}
\newtheorem{theo}{Theorem}[section]
\newtheorem{lem}{Lemma}[section]
\newtheorem{pro}{Proposition}[section]
\newtheorem{cor}{Corollary}[section]
\newtheorem{defi}{Definition}[section]
\font\uj=cmssbx10 scaled \magstep1
\title[A way to cross double resonance]{A way to cross double resonance}
\author{Chong-Qing Cheng}
\address{Department of mathematics, Nanjing Univerisity, Nanjing 210093, China}
\email{chengcq@nju.edu.cn.}
\begin{document}
\maketitle
\begin{abstract} For typical perturbations of convex integrable Hamiltonian system with three degrees of freedom, a path of diffusion is established to cross strong double resonant point. Together with the uniform hyperbolicity of invariant cylinders got in \cite{C15}, one obtains a transition chain along which one is able to construct diffusion orbits suggested in \cite{A66}.
\end{abstract}
\begin{spacing}{0.5}
\tableofcontents
\end{spacing}
\renewcommand\contentsname{Index}

\section{Introduction and the main result}
\setcounter{equation}{0}
To construct diffusion orbits in nearly integrable Hamiltonian system with three degrees of freedom, one has to handle the problem of double resonance as noticed by Arnold himself \cite{A66}. To be more precise, we study small perturbations of integrable Hamiltonian system
\begin{equation}\label{Eq1}
H(p,q)=h(p)+\epsilon P(p,q),\qquad (p,q)\in\mathbb{R}^3\times\mathbb{T}^3,
\end{equation}
for which we assume that $h$ is convex so that the Hessian matrix $\frac{\partial^2h}{\partial p^2}$ is positive definite and $H,P\in C^r$ with $r\ge 6$. Diffusion orbits are usually constructed along resonant paths. Given an  irreducible integer vector $k'\in\mathbb{Z}^3\backslash\{0\}$, one obtains a resonant path
\begin{equation}\label{resonantpath}
\Gamma_{k'}=\{p\in h^{-1}(\tilde E): \langle k',\partial h(p)\rangle=0\}
\end{equation}
where $\tilde E>\min h$. Along that path, there are points which satisfy another resonant condition, i.e. there exists irreducible integer vector $k''$, independent of $k'$, such that $\langle k'',\partial h(p'')\rangle=0$ holds at that point. We call it strong double resonance if the norm $|k''|$ is not large.

After one step of KAM iteration plus a linear coordinate transformation we obtain a normal form, denoted by $\Phi^*H$. For this normal form the coordinates are still denoted by $(p,q)$. For the Hamiltonian equation produced by $\Phi^*H$, if we introduce another transformation (we call it homogenization)
$$
\tilde G_{\epsilon}=\frac 1{\epsilon}\Phi^*H, \qquad \tilde y=\frac 1{\sqrt{\epsilon}}\Big(p-p''\Big), \qquad \tilde x=q, \qquad s=\sqrt{\epsilon}t,
$$
where $\tilde x=(x,x_3)$, $\tilde y=(y,y_3)$, $x=(x_1,x_2)$, $y=(y_1,y_2)$, the equation still has its own Hamiltonian as the following (cf. \cite{C15}):
\begin{equation}\label{Hamiltonian}
\tilde G_{\epsilon}=\frac{\omega_3}{\sqrt{\epsilon}}y_3+\frac 12\langle\tilde A\tilde y, \tilde y\rangle
-V(x)+\sqrt{\epsilon}\tilde R_{\epsilon}(\tilde x,\tilde y),
\end{equation}
where $\omega_3=\partial_{p_3}h(p'')$, the matrix $\tilde A=\frac{\partial^2h}{\partial p^2}(p'')$ is positive definite, $\tilde R_{\epsilon}(\tilde x,\tilde y)$ is bounded in $C^{r-2}$-topology if $|y|\le K$ with $K$ being independent of $\epsilon$. In the new coordinates $(p,q)\to\Phi(p,q)$ we used to get the normal form $\Phi^*H$, one has $\partial_{p_1}h(p'')=\partial_{p_2}h(p'')=0$. So, it follows from the fact $h({p''})>\min h$ that $\omega_3\ne 0$. The energy level set $\tilde G_{\epsilon}^{-1}(0)$ is uniquely determined by the energy level set $H^{-1}(\tilde E)$. The function $\tilde G_{\epsilon}$ produces a Lagrangian through Legendre transformation
$$
\tilde L_{\epsilon}=\frac 12\langle\tilde A^{-1}(\dot{\tilde x}-\tilde v),(\dot{\tilde x}-\tilde v)\rangle+V(x)+\sqrt{\epsilon}\tilde R'_{\epsilon}(\dot{\tilde x},\tilde x)
$$
where $\tilde v=(0,0,\sqrt{\epsilon}^{-1}\omega_3)$. If $V$ has a unique minimal point, which is assumed at $x=0$ up to a translation of coordinate, and if one ignores the term $\tilde R'_{\epsilon}$, the Mather set of $\tilde L_{\epsilon}$ is a periodic orbit $\tilde x_0(t)=(0,0,x_{3,0}+\frac{\omega_3}{\sqrt{\epsilon}}t)$.

Let $\tilde\alpha$ be the $\alpha$-function of $\tilde L_{\epsilon}$, we are concerned about the classes $\tilde c$ so that $\tilde\alpha(\tilde c)=0$. As the orbit $(\tilde x_0(t),\dot{\tilde x}_0(t))$ supports a minimal measure with non-zero rotation vector for the Lagrangian $\tilde L_{\epsilon}$ if the small term $\sqrt{\epsilon}\tilde R'_{\epsilon}$ is ignored, one has $\min\tilde{\alpha}<0$. Because $\tilde G_{\epsilon}$ is autonomous, $\tilde{\alpha}^{-1}(0)$ is the boundary of a convex set, homeomorphic to a 2-sphere in $\mathbb{R}^3$. Let $e_i\in\mathbb{R}^3$ be the vector whose other two entries are equal to zero except the $i$-th entry which is equal to one, we define
$$
\tilde{\mathbb{F}}_0=\{\tilde c\in H^1(\mathbb{T}^3,\mathbb{R}):\tilde\alpha(\tilde c)=0,\langle\rho(\mu_{\tilde c}),e_i\rangle=0,\forall\ i=1,2\},
$$
where $\rho(\mu_{\tilde c})$ denotes the rotation vector of the minimal measure $\mu_{\tilde c}$ for the class $\tilde c$. $\tilde{\mathbb{F}}_0$ is a flat, by translation, we can assume it is located in the plain $c_3=0$. Restricted in a neighborhood of $\tilde{\mathbb{F}}_0$ the set $\tilde{\alpha}^{-1}(0)$ can be considered as the graph of some Lipschitz function $c\to c_3(c)$ if we use the notation $\tilde c=(c,c_3)$ and $c=(c_1,c_2)$. Now we are ready to state the first main result of this paper.
\begin{theo}\label{mainresult}
For the Hamiltonian $\tilde G_{\epsilon}$ of $($\ref{Hamiltonian}$)$ there is a residual set $\mathfrak{V}\subset C^r(\mathbb{T}^2,\mathbb{R})$ with $r\ge 5$, for each $V\in\mathfrak{V}$ certain number $E_V>0$ exists such that  each contour line $($circle$)$ $(c_3^{-1}(E\sqrt{\epsilon}), E\sqrt{\epsilon})\subset H^1(\mathbb{T}^3,\mathbb{R})$ with $E\in (0,E_V)$ establishes a relation of first cohomology equivalence, namely, any two classes on the same circle are equivalent. These circles make up an annulus-like neighborhood of $\tilde{\mathbb{F}}_0$ in $\tilde{\alpha}^{-1}(0)$.
\end{theo}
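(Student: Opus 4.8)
\emph{Sketch of the argument.}
The plan is to work first with the principal part $\tilde G_\epsilon^0=\frac{\omega_3}{\sqrt\epsilon}y_3+\frac12\langle\tilde A\tilde y,\tilde y\rangle-V(x)$ of $(\ref{Hamiltonian})$, obtained by dropping $\sqrt\epsilon\tilde R_\epsilon$, to analyse it completely on the critical level $\{\tilde\alpha^0=0\}$, and then to restore $\tilde G_\epsilon$ by a stability argument. For $\tilde G_\epsilon^0$ the variable $x_3$ is cyclic, so $y_3$ is a first integral, and after completing the square in the form $\langle\tilde A\tilde y,\tilde y\rangle$ one sees that on each level $\{y_3=\text{const}\}$ the flow is, up to an affine symplectic change of variables, a mechanical system of two degrees of freedom in $(x,y)$ with potential $-V$ together with a decoupled linear rotator in $(x_3,y_3)$. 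If $V$ has a unique non-degenerate minimum, taken at $0$, then $\mathcal O:=\{x=0,y=0\}\times\{\text{rotator}\}$ is a hyperbolic periodic orbit on the critical level; passing to the Jacobi--Maupertuis metric identifies the action-minimizing orbits that move in $x$ with minimal closed geodesics and, in the limit, with minimal homoclinic loops at $\mathcal O$. This produces the picture I will use: for $\tilde c\in\tilde{\mathbb F}_0$ the Aubry set is $\mathcal O$ and its projection to $\mathbb{T}^2$ is the single point $x=0$; for $\tilde c$ on a contour circle $\Gamma_E=(c_3^{-1}(E\sqrt\epsilon),E\sqrt\epsilon)$ with $E>0$ small the Aubry set is a minimal periodic orbit $\gamma_g$ in a primitive class $g=g(\tilde c)\in H_1(\mathbb{T}^2,\mathbb Z)$, projecting to a non-contractible embedded closed curve, with $g(\tilde c)$ locally constant along $\Gamma_E$; on $\partial\tilde{\mathbb F}_0$ the Aubry set is $\mathcal O$ together with a minimal homoclinic loop $\sigma_g$, again projecting to an embedded essential curve. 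That the $\Gamma_E$ are indeed circles sweeping out an annular collar of $\tilde{\mathbb F}_0$ follows from the convexity of $\{\tilde\alpha^0\le 0\}$ and from $\tilde{\mathbb F}_0$ being a flat face on which $c_3(c)$ vanishes.

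I then fix the residual set $\mathfrak V$, using parametrized transversality (Ma\~n\'e--genericity) for perturbations of $V$ in $C^r$, so that simultaneously: $V$ has a unique non-degenerate minimum at $0$; only finitely many homotopy classes $g$ — those admitting a sufficiently short minimal homoclinic loop at $\mathcal O$ — are relevant, and for each of them the minimal periodic orbit $\gamma_g$ and the minimal homoclinic loop $\sigma_g$ are unique, hyperbolic, transverse and project to embedded closed curves; and the finitely many ``vertex'' classes, where two such objects would be simultaneously minimal (on $\Gamma_E$ or on $\partial\tilde{\mathbb F}_0$), are in general position. A number $E_V>0$ is fixed so small that for every $E\in(0,E_V)$ the circle $\Gamma_E$ lies in the region where only these classes occur and the arc decompositions of $\Gamma_E$ and of $\partial\tilde{\mathbb F}_0$ are radially consistent: the radial segment joining the interior of an arc $I\subset\Gamma_E$ carrying class $g$ to $\partial\tilde{\mathbb F}_0$ meets only the arc of $\partial\tilde{\mathbb F}_0$ carrying the same $g$, crossing no vertex. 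The bulk of the analytic effort lies here — controlling the minimal periodic orbits, the minimal homoclinic loops and their critical values as $\tilde c$ varies near $\partial\tilde{\mathbb F}_0$, that is, the local behaviour of the $\alpha$-function and of the Aubry sets of a mechanical system near its critical energy — and I expect the genericity of the homoclinic loops (all embedded and transverse at once, with the right finiteness and a uniform $E_V$) to be the hardest point.

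With $\mathfrak V$ and $E_V$ fixed, I prove that the collar $\tilde{\mathbb F}_0\cup\bigcup_{0<E<E_V}\Gamma_E$ is a single first cohomology equivalence class; since the relation is transitive and this set is connected, any two classes on a given $\Gamma_E$ are then equivalent. Given $\tilde c,\tilde c'\in\Gamma_E$, route a path inside the critical level from $\tilde c$ radially inward through an interior point of an arc of $\partial\tilde{\mathbb F}_0$ into the interior of $\tilde{\mathbb F}_0$, across it, and back out to $\tilde c'$. Along this path the Aubry set is, in turn, a single periodic orbit $\gamma_g$, then $\mathcal O\cup\sigma_g$ at $\partial\tilde{\mathbb F}_0$, then $\mathcal O$ inside $\tilde{\mathbb F}_0$; on a Poincar\'e section $\{x_3=\text{const}\}$ each of these projects to a point or to an embedded non-contractible closed curve, hence \emph{does not separate the section}, so by upper semi-continuity the Ma\~n\'e set stays in a non-separating tubular neighbourhood of it for nearby path parameters, and Mather's variational mechanism — in the $c$-equivalence form of Bernard and of Cheng--Yan — yields the connecting orbits needed to move $\tilde c$ along the path. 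The simplest instance is moving freely inside the interior of $\tilde{\mathbb F}_0$ (constant, point-like Aubry set); the delicate instance, which is literally ``the way to cross double resonance'', is the transition across $\partial\tilde{\mathbb F}_0$, where the periodic orbit dissolves into the homoclinic loop $\sigma_g$ at the hyperbolic $\mathcal O$: one must check that the Ma\~n\'e set does not acquire the whole homoclinic tangle — it does not, because $\sigma_g$ is the unique, transverse, embedded minimizing loop — so the configuration remains a single essential curve and the mechanism survives. Routing through $\tilde{\mathbb F}_0$ is precisely what lets us bypass the vertices of $\Gamma_E$, where two periodic orbits in independent classes coexist and their union \emph{does} separate the section.

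Finally I restore $\sqrt\epsilon\tilde R_\epsilon$. The $\alpha$-function is Lipschitz in the Lagrangian for the $C^0$-action norm, the Aubry and Ma\~n\'e sets are upper semi-continuous, $\mathcal O$ and the $\gamma_g$ persist by normal hyperbolicity, and all the genericity conditions on $V$ are open; hence for $\epsilon$ small the full Hamiltonian $\tilde G_\epsilon$ still has a flat $\tilde{\mathbb F}_0$ with the stated rotation-vector property, contour circles with the same arc-and-vertex combinatorics, and non-separating Aubry/Ma\~n\'e sets along the routes above. The construction therefore goes through for $\tilde G_\epsilon$ itself, which proves the theorem and determines $E_V$.
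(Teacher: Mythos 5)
Your overall architecture (flat $\tilde{\mathbb{F}}_0$, contour circles, a section on which the Ma\~n\'e set is small, hence $\mathbb{V}_c^{\bot}$ large, hence $c$-equivalence; then restore $\sqrt{\epsilon}\tilde R_{\epsilon}$ by upper semi-continuity) matches the paper, and your routing through the interior of $\tilde{\mathbb{F}}_0$ is a legitimate variant of the paper's route, which instead stays on each circle $\Gamma_\lambda$ and never touches the flat. But there is a genuine gap at exactly the point you flag and then dismiss in one clause: the claim that the Ma\~n\'e set of a class $c$ on (or near) $\partial\mathbb{F}_0$ ``does not acquire the whole homoclinic tangle \ldots because $\sigma_g$ is the unique, transverse, embedded minimizing loop.'' Uniqueness, transversality and embeddedness of the minimizing homoclinic concern the \emph{Aubry} set; the Ma\~n\'e set consists of all semi-static curves and can strictly contain the Aubry set --- for $c\in\partial\mathbb{F}_0$ it can a priori cover all of $\mathbb{T}^2$, in which case no section $\Sigma_c$ meets it in a set with small first homology and the $c$-equivalence mechanism fails at the crossing point (and, in the paper's version, on the nearby contour circles, since the non-covering on the annulus is deduced from non-covering on $\partial\mathbb{F}_0$ by upper semi-continuity). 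The paper assumes your transversality/uniqueness conditions as hypotheses ({\bf H1}), ({\bf H2}) and still has to devote the bulk of Section 3 to excluding $\mathcal{N}(c)=\mathbb{T}^2$: Lemma \ref{lemma3.1} (orbits passing near the hyperbolic fixed point with $\lambda_1<\lambda_2$ cannot both enter and leave along the weak eigendirection), Theorems \ref{th3.1}, \ref{pro3.3}, \ref{pro3.4} (a covering Ma\~n\'e set forces a semi-static curve asymptotic to the fixed point along the \emph{strong} direction $\pm\Lambda_{2,x}$), the counting argument that only four orbits, hence at most four classes in $\partial\mathbb{F}_0$, can have this property, and finally a further $C^r$-small non-negative bump supported near the fixed point (Theorem \ref{thm3.1}) that destroys these four. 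That last perturbation, not parametrized transversality for periodic/homoclinic orbits, is what actually produces the residual set $\mathfrak{V}$ of the statement; your proposal contains no substitute for it.

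Two smaller inaccuracies. First, on a contour circle the Aubry set is not always a periodic orbit in a primitive class: for most classes the rotation vector is irrational (Denjoy or invariant-circle case), and near the vertices the homology is a non-primitive combination $k_ig_i+k_{i+1}g_{i+1}$; the paper handles the irrational and rational cases separately (Theorem \ref{th3.1} versus Theorem \ref{pro3.3}) precisely because the ``unique closed minimizer'' picture is unavailable in the irrational case. Second, ``all the genericity conditions on $V$ are open'' is not right: conditions such as $A(g^{\perp})+A(-g^{\perp})>0$ for all $g$ simultaneously, and the non-covering of $\mathcal{N}(c)$ for all $c\in\partial\mathbb{F}_0$, are countable intersections of open dense conditions, which is why the conclusion is residual rather than open-dense; the persistence under $\sqrt{\epsilon}\tilde R_{\epsilon}$ is then obtained from compactness of $\partial\mathbb{F}_{0,G_{\epsilon}}$ plus upper semi-continuity, as you say, but applied to the non-covering property rather than to openness of the generic conditions.
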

The cohomology equivalence for autonomous case was introduced in \cite{LC}. Roughly speaking,
two cohomological classes $c,c'$ are said to be equivalent if there is a continuous path $\Gamma:[0,1]\to H^1(M,\mathbb{R})$ such that $\Gamma(0)=c$, $\Gamma(1)=c'$, $\alpha(\Gamma(s))=\mathrm{constant}$, and for $|s-s'|\ll 1$ $\langle\Gamma(s)-\Gamma(s'),g\rangle=0$ holds for each $g\in H_1(\mathcal{N}(\Gamma(s))|_{\Sigma_{s}},\mathbb{Z})$ where $\Sigma_{s}$ is a suitable section of the configuration manifold. In this case, the Aubry set for $\Gamma(s)$ is connected to that for $\Gamma(s')$ by local minimal orbit, which we call type-$c$. Another mechanism to get local connecting orbit appeared much earlier in \cite{A64}. Since it stays in a small neighborhood of homoclinic orbit, we call it of type-$h$.

It is shown in \cite{C15} that normally hyperbolic invariant cylinders (NHICs) extend to $\sqrt{\epsilon}^{1+d}$-neighborhood of the flat $\tilde{\mathbb{F}}_0$. As $\sqrt{\epsilon}\gg\sqrt{\epsilon}^{1+d}$ if $\epsilon\ll 1$, it follows from Theorem \ref{mainresult} that these NHICs are connected by pathes of cohomology equivalence. Therefore, one obtains candidates of generalized transition chain.

A generalized transition chain is a path $\Gamma:[0,1]\to H^1(M,\mathbb{R})$ such that  $\alpha(\Gamma(s))$ keeps constant and for $|s-s'|\ll 1$ the Aubry set $\tilde{\mathcal{A}}(\Gamma(s))$ is connected to $\tilde{\mathcal{A}}(\Gamma(s'))$ by local minimal orbit either of type-$c$ or of type-$h$. Usually, a chain exists along resonant channel. Recall the resonant relation $k'$ in (\ref{resonantpath}), which determines a resonant path in 
\begin{equation}\label{resonantpathinc}
\tilde{\mathbb{W}}_{k',\tilde E}=\Big\{\tilde c\in H^1(\mathbb{T}^3,\mathbb{R}):\alpha_H(\tilde c)=\tilde E,\langle\rho(\mu_{\tilde c}),k'\rangle=0\Big\},
\end{equation}
where the $\alpha$-function and the $\beta$-function are determined by $H$.
If $\epsilon=0$, $\tilde{\mathbb{W}}_{k',\tilde E}=\Gamma_{k'}$ if we regard them as sets in $\mathbb{R}^3$. The relation $p=\tilde c$ holds along each $\tilde c$-minimal orbit. For small $\epsilon>0$, $\tilde{\mathbb{W}}_{k',\tilde E}\subset\alpha^{-1}(\tilde E)$ becomes a narrow channel, the action variable $p$ along any orbit in $\tilde{\mathcal{A}}(\tilde c)$ remains in a small neighborhood of $p=\tilde c$. A diffusion orbit is obtained if it connects $\tilde{\mathcal{A}}(\tilde c)$ to $\tilde{\mathcal{A}}(\tilde c')$, the orbit intersects a small neighborhood of the torus $p=\tilde c$ and of the torus $p=\tilde c'$.

Let $B_{\tilde E}\subset\mathbb{R}^3$ be a ball centered at the origin such that $H^{-1}(\tilde E)\subset B_{\tilde E}$.

\begin{theo}\label{mainresult2}
For the Hamiltonian $H$ of $($\ref{Eq1}$)$ and a resonant path in $\tilde{\mathbb{W}}_{k',\tilde E}$ of $($\ref{resonantpathinc}$)$, there exists a cusp-residual set $\mathfrak{P}\subset C^r(B\times\mathbb{T}^3)$ or $\mathfrak{P}\subset C^r(\mathbb{T}^3)$ with $r\ge 6$, such that for each $\epsilon P\in\mathfrak{P}$, the Hamiltonian flow $\Phi_H^t$ admits a generalized transition chain along the path in $\tilde{\mathbb{W}}_{k',\tilde E}$.
\end{theo}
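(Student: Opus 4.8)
\textit{Proof strategy.} The plan is to build the chain piecewise: decompose the prescribed resonant path in $\tilde{\mathbb{W}}_{k',\tilde E}$ of $(\ref{resonantpathinc})$ into a finite collection of \emph{single-resonance segments} together with finitely many short \emph{double-resonance windows} surrounding the strong double resonant points. Since a double resonance at $p''$ is called strong only when $|k''|$ stays bounded, and $H^{-1}(\tilde E)$ is compact, there are only finitely many strong double resonant points on the path; away from small neighbourhoods of these the path meets only single resonances or double resonances with $|k''|$ large, and the latter are crossed by the same argument as a single resonance because the size of the relevant resonant harmonic is controlled by $|k''|$. On each single-resonance segment I would run the a priori unstable machinery, on each double-resonance window I would invoke Theorem \ref{mainresult} together with the extension of the NHICs from \cite{C15}, and finally I would glue the local connecting orbits into one continuous path $\Gamma\colon[0,1]\to H^1(\mathbb{T}^3,\mathbb{R})$ with $\alpha_H\circ\Gamma\equiv\tilde E$.

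For a single-resonance segment one performs one more step of averaging of the harmonics resonant with $k'$, producing after rescaling a rotator weakly coupled to a pendulum-type system whose invariant cylinders are normally hyperbolic. Along such a cylinder the minimal measures $\mu_{\tilde c}$ and the Aubry sets $\tilde{\mathcal A}(\tilde c)$ vary continuously with $\tilde c$, and two nearby classes are joined by a local minimal orbit of type-$c$ via the cohomology-equivalence construction of \cite{LC} carried out along the cylinder; at the finitely many parameters where the cylinder structure bifurcates, i.e.\ where the channel is cut by the pendulum separatrix, one instead uses a local minimal orbit of type-$h$ shadowing a non-degenerate homoclinic orbit in the spirit of \cite{A64}. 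The non-degeneracy of these homoclinic orbits, the simplicity of the Aubry sets along the channel, and the transversality that makes $\tilde{\mathbb{W}}_{k',\tilde E}$ a genuine curve of classes are each open--dense conditions on $\epsilon P$, and intersecting over the finitely many segments and the countably many auxiliary requirements keeps us in a residual set.

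On a double-resonance window I pass to the homogenised coordinates of $(\ref{Hamiltonian})$, in which the perturbation contributes the effective potential $V$ on $\mathbb{T}^2$; a residual condition on $\epsilon P$ forces $V$ into the set $\mathfrak V$ of Theorem \ref{mainresult} (unique non-degenerate minimum, etc.), and this is exactly where the genericity hypothesis of the present theorem is consumed: since $\epsilon$ must be small and, for each small $\epsilon$, an admissible $P$ must avoid a closed nowhere-dense obstacle whose size shrinks with $\epsilon$, the resulting set is cusp-residual rather than residual. Theorem \ref{mainresult} then gives that the contour circles $(c_3^{-1}(E\sqrt{\epsilon}),E\sqrt{\epsilon})$ with $E\in(0,E_V)$ consist of mutually cohomology-equivalent classes and fill an annular neighbourhood of $\tilde{\mathbb{F}}_0$ of size $\asymp\sqrt{\epsilon}$; by \cite{C15} the NHICs of the adjacent single-resonance segments persist into the $\sqrt{\epsilon}^{\,1+d}$-neighbourhood of $\tilde{\mathbb{F}}_0$, and since $\sqrt{\epsilon}\gg\sqrt{\epsilon}^{\,1+d}$ the class at which an incoming cylinder ends and the class at which the outgoing cylinder begins both lie on these circles. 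Hence one routes the path from the end of the incoming cylinder onto a contour circle, around the circle past $\tilde{\mathbb{F}}_0$, and back onto the outgoing cylinder, every step a type-$c$ connection; any residual single- or sub-resonances met inside the window are handled by the single-resonance argument at a finer scale.

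Concatenating the finitely many segments and windows produces a path $\Gamma$ with $\alpha_H(\Gamma(s))\equiv\tilde E$ along which, for $|s-s'|\ll 1$, $\tilde{\mathcal A}(\Gamma(s))$ and $\tilde{\mathcal A}(\Gamma(s'))$ are joined by a local minimal orbit of type-$c$ or type-$h$, i.e.\ a generalized transition chain. I expect the genuine difficulty to be the matching at the edge of each double-resonance window: one must certify that the cohomology-equivalence circles supplied by Theorem \ref{mainresult} actually reach the classes carried by the NHICs of \cite{C15}, that is, that there is no annulus of classes near $\tilde{\mathbb{F}}_0$ for which neither the type-$c$ nor the type-$h$ mechanism is available. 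This overlap, together with the bookkeeping that makes a single cusp-residual set of $P$ work simultaneously for every segment, every strong double resonant point, and every scale $\sqrt{\epsilon}^{\,1+d}\ll\sqrt{\epsilon}$, is the crux of crossing double resonance.
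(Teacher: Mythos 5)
Your decomposition of the path into single-resonance segments and double-resonance windows, the use of Theorem \ref{mainresult} together with the NHICs of \cite{C15} and the scale comparison $\sqrt{\epsilon}\gg\sqrt{\epsilon}^{\,1+d}$ to route the chain around each strong double resonant point, and the cusp-residual bookkeeping all match the architecture of Section 4 of the paper; the ``matching at the edge of the window'' that you single out as the crux is exactly the Overlap Property established in Section 3.5, so that part is in order. The genuine gap is in your treatment of genericity along the single-resonance segments. You assert that the relevant non-degeneracies ``are each open--dense conditions on $\epsilon P$, and intersecting over the finitely many segments and the countably many auxiliary requirements keeps us in a residual set.'' But the requirements are not countable: each invariant torus on a cylinder $\tilde\Pi_i$ --- and these form a continuum of parameters --- imposes its own condition, namely that the minimum set of the corresponding barrier function $u^-_{l,\sigma}-u^+_{r,\sigma}$, restricted to a section and off a neighbourhood of the Aubry set, be non-empty and totally disconnected (this variational condition, not geometric transversality of a single homoclinic orbit, is what Definition \ref{chaindef1} actually demands). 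A naive intersection of open--dense sets indexed by a continuum gives nothing.

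The paper closes this gap quantitatively: the invariant circles on each cylinder are parameterized by the enclosed area $\sigma$ with the $\tfrac 12$-H\"older estimate (\ref{holder}), which propagates to the weak KAM solutions via (\ref{completeeq10}); consequently the family of barrier functions $\{B^*_{c(\sigma)}\}$ has finite box dimension in $C^0$, and one constructs finite grids of explicit perturbations (bump functions supported in tubular neighbourhoods of the semi-static curves, as in (\ref{completeeq5})--(\ref{completeeq12})) such that, after discarding a set of parameters of measure $O(\mu)$, the oscillation of the perturbed barrier functions on every small disk is bounded below \emph{simultaneously for all} $\sigma$. Without this uniform-in-$\sigma$ argument, or an equivalent one, the residual set you invoke does not exist and the chain cannot be certified at the uncountably many classes carried by the cylinders. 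A secondary omission: the theorem asserts two statements, genericity in $C^r(B\times\mathbb{T}^3)$ and Ma\~n\'e genericity in $C^r(\mathbb{T}^3)$; the latter is not a corollary of the former and requires the separate construction of Section 4.3, since one is only permitted to perturb the potential.
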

Although it appears complicated, using variational method to construct diffusion orbits along a generalized transition chain has become somehow routine work, similar to what we have done in \cite{CY1,CY2,LC}. We shall do it in another paper.

Since the announcement of Mather, it has been widely known as a difficult problem how to cross double resonance  (see \cite{M03}). Along the path suggested by Mather, one tries to follow a cylinder with hole such that the method for {\it a priori} unstable system can be applied (see \cite{KZ,Mar}). In this case, one is involved into the dynamics in the zero energy level set. Although it appeared at first view a trouble that the hyperbolic fixed point breaks invariant cylinder, the intersection of stable and unstable manifold of the fixed point produces a ``stochastic" layer which connects many pieces of cylinder. It allows us to design a completely different way to cross double resonance, certain orbit moves in the ``stochastic" layer connecting one cylinder to another. As one does not touch the zero energy level set along the way, the new method has its application in the problem to cross multiple resonance in {\it a priori} stable system with arbitrary degrees of freedom \cite{CX}. In the way available to handle systems with more than three degrees of freedom, certain singularities merges at zero energy level set during the process of reduction of order. It prevents one from touching the zero energy level.

The proof of Theorem \ref{mainresult} relies on well understanding of the dynamics around the fixed point of the averaged system, we shall do it in Section 2. The proof is completed in Section 3. Together with the result obtained in \cite{C15}, we obtain Theorem \ref{mainresult2} in Section 4.

\section{The dynamics around fixed point}
\setcounter{equation}{0}

Restricted on the energy level $\tilde G_{\epsilon}^{-1}(0)$ (corresponding to the energy level $H^{-1}(E)$), around a double resonant point $p''$, we get an equivalent $2\pi\frac{\sqrt{\epsilon}}{\omega_3}$-periodic Hamiltonian with two degrees of freedom
\begin{equation}\label{mainsystems}
G_{\epsilon}=\frac 12\langle Ay,y\rangle -V(x)+\sqrt{\epsilon}R_{\epsilon}(x,y,\tau),\qquad (x,y)\in\mathbb{T}^2\times\mathbb{R}^2
\end{equation}
where $\tau=\frac{\sqrt{\epsilon}}{\omega_3}x_3$, $G_{\epsilon}$ solves the equation $\tilde G_{\epsilon}(x,\frac{\omega_3}{\sqrt{\epsilon}}\tau,y,-\frac{\sqrt{\epsilon}}{\omega_3} G_{\epsilon})=0$, $\omega_3=\partial_{p_3}h(p'')\ne 0$ and the matrix $A$ is positive definite. If we denote the Hamiltonian with two degrees of freedom by
\begin{equation}\label{averagesystem}
G=\frac 12\langle Ay,y\rangle -V(x),
\end{equation}
then $G_{\epsilon}=G+\sqrt{\epsilon}R_{\epsilon}$ is a time-periodic small perturbation of $G$. For the Hamiltonian $G$, the minimal point of $V$, denoted by $x_0$, determines a constant solution $(x,y)=(x_0,0)$. This point supports a minimal measure of the Lagrangian $L$ which is obtained from $G$ by Legendre transformation
$$
L=\frac 12\langle A^{-1}\dot x,\dot x\rangle+V(x).
$$
The rotation vector of this minimal measure is zero. Let $\alpha_0$ denote the $\alpha$-function for $L$, it reaches its minimum at a set including the zero class $0\in H^1(\mathbb{T}^2,\mathbb{R})$. We denote this set by
$$
\mathbb{F}_0=\{c\in H^1(\mathbb{T}^2,\mathbb{R}):\alpha_0(c)=\min\alpha_0\}.
$$
This set is usually a full dimensional flat of the $\alpha$-function.

\subsection{Flat of the $\alpha$-function}
By the definition, a subset is called a flat of certain $\alpha$-function if, restricted on this set, the $\alpha$-function is affine, and no longer affine on any set properly containing the flat. As $\alpha$-function is convex with super-linear growth, each flat is a convex and bounded set. Given an $n$-dimensional flat $\mathbb{F}$, a subset in $\partial \mathbb{F}$ is called an edge if it is contained in a $(n-1)$-dimensional hyperplane. Since each flat is convex, each edge is also convex.

\begin{theo}\label{flatthm1}
Given a Tonelli Lagrangian $L\in C^2(T\mathbb{T}^n,\mathbb{R})$ and a first cohomology class $c_0\in H^1(\mathbb{T}^n,\mathbb{R})$, if the minimal measure is supported on a hyperbolic fixed point, then there exists an $n$-dimensional flat $\mathbb{F}_0\subset H^1(\mathbb{T}^n,\mathbb{R})$ such that for each $c\in\mathbb{F}_0$, the $c$-minimal measure is supported on this point.
\end{theo}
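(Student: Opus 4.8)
The plan is to use the variational (Mather/Mañé) characterization of the $\alpha$-function together with the fact that a hyperbolic fixed point is an isolated, exponentially attracting-repelling object for the linearized flow, so that small perturbations of the cohomology class cannot move the minimizing measure off the point. Write the fixed point as $x_*$, so that $dV(x_*)=0$ where $V$ is the potential whose Legendre-dual enters $L$, and recall that the minimal measure for $c_0$ is $\delta_{(x_*,0)}$. The first observation is that, because the measure is a single point, $\alpha_0(c_0)=-L(x_*,0)+\langle c_0,0\rangle=-L(x_*,0)$ and this value is realized by the closed (indeed constant) curve sitting at $x_*$; for any other class $c$, the same constant curve gives the a priori bound $\alpha_0(c)\le -L(x_*,0)$, so $\alpha_0(c)\le\alpha_0(c_0)$ for \emph{all} $c$ near $c_0$. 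Since $\alpha_0$ is convex, once we show the reverse inequality on a neighborhood we will have that $\alpha_0$ is \emph{constant} — hence affine — on that neighborhood, which is exactly the statement that $c_0$ lies in the interior of an $n$-dimensional flat $\mathbb F_0$, and the constant curve at $x_*$ will be the $c$-minimal measure for every such $c$.

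So the crux is the lower bound: for $c$ close enough to $c_0$, every $c$-minimal measure is still supported on $(x_*,0)$, equivalently $\alpha_0(c)\ge -L(x_*,0)$. Here I would invoke hyperbolicity of the fixed point for the Euler--Lagrange (equivalently Hamiltonian) flow. Being a hyperbolic rest point, $(x_*,0)$ is a non-degenerate critical point of the ``mechanical'' action: there is a neighborhood $U$ of $x_*$ and a constant $\kappa>0$ such that any loop in $U$ that is not the constant loop has $c_0$-action at least $\kappa$ above that of the constant loop (this is the standard statement that a hyperbolic equilibrium is a strict local minimizer of the free-period action functional $\int (L - \eta_{c_0}) + T\,\alpha_0(c_0)$ among nearby loops, coming from positivity of the second variation / existence of a local Lyapunov function built from the stable and unstable subspaces). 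Combined with a Mañé-type a priori compactness estimate — minimizing measures for classes in a bounded set have uniformly bounded support in the $y$-variable, so their projected supports that stray outside $U$ must pay a definite amount of action — one gets: for $|c-c_0|$ small, any measure charging the complement of $U$ has action strictly exceeding $-L(x_*,0)$, hence is not minimal; and inside $U$ the only invariant measure with zero-ish rotation that can be minimal is $\delta_{(x_*,0)}$ by the strict local-minimizer property. Tracking the perturbation $\langle c-c_0,\rho\rangle$ in the action only shifts things by $O(|c-c_0|)$, which is dominated by the gap $\kappa$. This yields $\alpha_0(c)=-L(x_*,0)$ on a full-dimensional neighborhood.

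The main obstacle, and the step deserving the most care, is the uniformity in the lower bound: one must rule out minimizing measures whose support has a small projection near $x_*$ but lives on a long, slow orbit spiraling along the stable/unstable manifolds (a near-homoclinic excursion) which could in principle have action barely above the fixed point's. This is where hyperbolicity is essential rather than cosmetic: the exponential dichotomy forces any such excursion to accumulate a definite positive action cost, uniform in the length of the excursion, so no sequence of such measures can have action approaching $-L(x_*,0)$ — and a limit measure would then be forced by weak-$*$ compactness to be supported on the chain-recurrent set near $x_*$, which for a hyperbolic rest point is just the point itself. Formalizing this uniform cost is the one genuinely technical point; once it is in hand, convexity of $\alpha_0$ upgrades ``$\alpha_0 \le \alpha_0(c_0)$ everywhere and $=\alpha_0(c_0)$ on a neighborhood'' to the existence of the $n$-dimensional flat $\mathbb F_0$, and the identification of $\delta_{(x_*,0)}$ as the minimal measure throughout $\mathbb F_0$ is then immediate from the equality case in the action inequality.
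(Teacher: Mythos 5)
Your proposal has a genuine gap, and also a sign slip worth flagging first. Testing the variational problem with the invariant measure $\delta_{(x_*,0)}$ gives $\alpha_0(c)\ge -L(x_*,0)$ for \emph{every} $c$ (it is an upper bound for $\min_\mu A_c(\mu)$, hence a lower bound for $\alpha_0$), not $\alpha_0(c)\le -L(x_*,0)$ as you write; the nontrivial direction is the opposite inequality, namely that no other invariant measure does better. That part is repairable. What is not repairable is the overall strategy: you aim to show that $\alpha_0$ is constant on a full neighborhood of $c_0$, i.e.\ that $c_0$ lies in the \emph{interior} of the flat. The hypothesis does not give this. The Mather set for $c_0$ being the fixed point does not prevent the Aubry/Ma\~n\'e set from containing a minimal homoclinic orbit $\gamma$ with $[\gamma]=g\ne 0$ and zero $(L-\eta_{c_0})$-action (this is exactly the situation the paper studies later for $c_0\in\partial\mathbb{F}_0$). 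In that case, for any $c$ with $\langle c-c_0,g\rangle>0$, no matter how small $|c-c_0|$ is, the fixed point is no longer $c$-minimal, so $\alpha_0$ is not constant on any neighborhood of $c_0$. Your ``uniform action gap'' is precisely what fails here: long periodic (or near-homoclinic) orbits shadowing $\gamma$ carry invariant measures that charge the complement of $U$ with mass $O(1/T)$ and have $c_0$-action excess $A(\gamma)/T$ over the fixed point; when $A(\gamma)=0$ this excess is beaten by the perturbation term $\langle c-c_0,g\rangle/T$ for every $c\ne c_0$ in the bad half-space, so weak-$*$ compactness and chain recurrence near the rest point do not rescue the argument.

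The paper's proof is built exactly to get around this. It first proves the key inequality $A(g)+A(-g)>0$ for the minimal homoclinic actions (using the conjugate weak KAM pair and the fact that, near the hyperbolic fixed point, the barrier $h^{\infty}(0,\cdot)+h^{\infty}(\cdot,0)$ has the origin as its unique zero), concludes that the homology classes of zero-action homoclinics generate a cone $\mathrm{span}_{\mathbb{R}_+}\mathbb{G}_0$ properly contained in a half-space, and then \emph{tilts} the cohomology class: for $c$ small with $-c$ in a dual cone $\mathbb{C}_0$, every such homoclinic acquires strictly positive $c$-action and the Aubry set collapses to the singleton, so by upper semi-continuity of the Ma\~n\'e set a whole neighborhood of that $c$ lies in the flat. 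The $n$-dimensional flat is produced around this tilted class, not around $c_0$. To fix your argument you would need either to add this cone construction or to assume additionally that no minimal homoclinic has zero $c_0$-action --- which is assuming $c_0\in\mathrm{int}\,\mathbb{F}_0$, i.e.\ begging the question.
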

\begin{proof}
By translation one can assume that the fixed point is at $(x,\dot x)=(0,0)$, by adding a closed 1-form and a constant to the Lagrangian, one can assume $c_0=0$ and $L(0,0)=0$.

To each closed curve $\xi$: $[-T,T]\to\mathbb{T}^n$ with $\xi(-T)=\xi(T)$ a first homology class $[\xi]=g\in H(\mathbb{T}^n,\mathbb{Z})$ is associated. We consider the quantity
$$
A(g)=\liminf_{T\to\infty}\inf_{\stackrel {\xi(-T)=\xi(T)}{\scriptscriptstyle [\xi]=g}} \int_{-T}^TL(d\xi(t))dt.
$$
By the condition assumed on $L$, one has that $A(g)\ge0$ for any $g\ne 0$. There exist at least $n+1$ irreducible classes $g_i\in H(\mathbb{T}^n,\mathbb{Z})$ and $n+1$ minimal homoclinic orbits $d\gamma_i$ such that $A([g_i])=A(\gamma_i)$ \cite{Be}. Clearly, $H_1(\mathbb{T}^n,\mathbb{Z})$ can be generated by
the homology classes of all minimal homoclinic curves over $\mathbb{Z}_+$.

Let us abuse the notation $g$ to denote homology class $g\in H_1(\mathbb{T}^n,\mathbb{Z})$ or to denote a point $g\in\mathbb{Z}^n$. For each curve $\bar\gamma_T$: $[-T,T]\to\mathbb{R}^n$ with $\bar\gamma_T(-T)=0$ and $\bar\gamma_T(T)=g$, one has
$$
A(g)=\liminf_{T\to\infty}\inf_{\stackrel {\bar\xi(-T)=0}{\scriptscriptstyle \bar\xi(T)=g}} \int_{-T}^TL(\dot{\bar\xi}(t),\bar\xi(t))dt.
$$
We claim that
\begin{equation}\label{flateq1}
A(g)+A(-g)>0.
\end{equation}
To prove this claim, let us recall the definition of $h^{\infty}$ introduced in \cite{M93}
$$
h^{\infty}(\bar x,\bar x')=\liminf_{T\to\infty}\inf_{\stackrel {\bar\xi(0)=\bar x}{\scriptscriptstyle \bar\xi(T)=\bar x'}} \int_{0}^TL(\dot{\bar\xi}(t),\bar\xi(t))dt.
$$
but here the configuration coordinate are in the universal covering space, $x,x'\in\mathbb{R}^n$, not on the torus.
It is a backward weak KAM if we think it as the function of $x'$ and is a forward weak KAM of $x$. By the definition, we have
$$
A(g)+A(-g)=h^{\infty}(0,g)+h^{\infty}(g,0).
$$

The quantity $A(g)$ is achieved may not by a curve connecting the origin to $g\in\mathbb{Z}^n$, but by the conjunction of several curves  $\bar\gamma_1\ast\bar\gamma_2\ast\cdots\ast\bar\gamma_m$. Let $\bar\gamma_i$: $\mathbb{R}\to\mathbb{R}^n$ denote a curve ($i=1,\cdots,m$), the conjunction implies that $\bar\gamma_i(-\infty)=\bar\gamma_{i-1}(\infty)$. Let $\gamma_i=\pi_{\infty}\bar\gamma_i$, where $\pi_{\infty}:\mathbb{R}^n\to\mathbb{T}^n$ denotes the standard projection, then, $\gamma_1,\cdots,\gamma_m$ are minimal homoclinic curves so that $g=\sum_{i=1}^m[\gamma_i]$. Let $g_i=\sum_{j=1}^i[\gamma_j]$, both functions $h^{\infty}(0,x)$ and $h^{\infty}(x,g_i)$ are differentiable along the curve $\bar\gamma_i$, the derivative $\partial_2h^{\infty}(0,x)=\partial_1h^{\infty}(x,g_i)$ determines the speed of $\dot{\bar\gamma}_i(t)$ if $x=\bar\gamma_i(t)$. So, some constant $C_i$ exists such that
\begin{equation}\label{flateq2}
h^{\infty}(0,\bar\gamma_i(t))=h^{\infty}(\bar\gamma_i(t),g_i)+C_i,\qquad \forall\ t,
\end{equation}
Let $t\to\infty$, one sees that $C_i=h^{\infty}(0,g_i)$.

Clearly, both functions $h^{\infty}(g_i,x)$ and $h^{\infty}(x,g_i)$ are $L$-dominate function. Restricted on a small ball $B_{\delta}(g_i)$ centered at $g_i$, the projection of these functions $h^{\infty}(\pi_{\infty}g_i,\pi_{\infty}x)$ and $-h^{\infty}(\pi_{\infty}x,\pi_{\infty}g_i)$ are conjugate pair of weak KAM. As $h^{\infty}(g_i,g_i)=0$, one has (see Theorem 5.1.2 in \cite{Fa})
\begin{equation}\label{flateq2.1}
h^{\infty}(x,g_i)\le h^{\infty}(g_i,0)-h^{\infty}(x,0), h^{\infty}(0,x)-h^{\infty}(0,g_i)\le h^{\infty}(g_i,x).
\end{equation}
If
$$
A(g)+A(-g)=h^{\infty}(0,g)+h^{\infty}(g,0)=0,
$$
substituting $h^{\infty}(x,g_i)$ in (\ref{flateq2.1}) by the expression in (\ref{flateq2}) one sees that
$$
h^{\infty}(0,\bar\gamma_m(t))+h^{\infty}(\bar\gamma_m(t),0)=0, \qquad \forall\ t\in\mathbb{R}.
$$
Let $t\to -\infty$ one obtains that $h^{\infty}(0,g_{m-1})+h^{\infty}(g_{m-1},0)=0$. Repeating this procedure, one obtains that
$$
h^{\infty}(0,\bar\gamma_1(t))+h^{\infty}(\bar\gamma_1(t),0)=0, \qquad \forall\ t\in\mathbb{R}.
$$
Because the point $\{(x,\dot x)=0\}$ is hyperbolic, certain large number $t_0>0$ and suitably small number $\delta>0$ exist such that $\bar\gamma(t)\in B_{\delta}(0)$ if $t<-t_0$ and
$$
h^{\infty}(0,\bar\gamma_1(t))+h^{\infty}(\bar\gamma_1(t),0)>0, \qquad \forall\ t<-t_0,
$$
as $\{0\}$ is the unique minimal point of the barrier function in $B_{\delta}(0)$. This contradiction proves the formula (\ref{flateq1}).

Let
$$
\mathbb{G}_0=\{g\in H_1(\mathbb{T}^n,\mathbb{Z}):\exists\ \gamma: \mathbb{R}\to\mathbb{T}^n\ s.t. \ [\gamma]=g,\ A(\gamma)=0\}.
$$
$\mathbb{G}_0$ is said to generate a rational direction $g\in\mathbb{Z}^n$ over $\mathbb{Z}_+$ if there exist $k,k_i\in\mathbb{Z}_+$ and $g_i\in \mathbb{G}_0$ such that
$$
kg=\sum k_ig_i.
$$
It is an immediate consequence of the formula (\ref{flateq1}) that once $\mathbb{G}_0$ generates a rational direction $g\in\mathbb{Z}^n$ over $\mathbb{Z}_+$, then it can not generate the direction $-g$ over $\mathbb{Z}_+$. Therefore, the set
$$
\text{\rm span}_{\mathbb{R}_+}\mathbb{G}_0=\{\Sigma a_ig_i:\ g_i\in\mathbb{G}_0,\ a_i\ge 0\}
$$
is a cone properly restricted in half space. Thus, there exists an $n$-dimensional cone $\mathbb{C}_0$ such that
$$
\langle c,g\rangle >0,\qquad \forall\ c\in\mathbb{C}_0,\ g\in \text{\rm span}_{\mathbb{R}_+}\mathbb{G}_0.
$$

Since the minimal measure for zero cohomology class is supported on the fixed point, $\tilde{\mathcal{N}}(0)$ is composed of those minimal homoclinic orbits along which the action equals zero. According to the upper semi-continuity of Ma\~n\'e set in cohomology class, any minimal measure $\mu_c$ is supported by a set lying in a small neighborhood of these homoclinic orbits if $|c|$ is very small. Consequently. we have $\rho(\mu_c)\in\text{\rm span}_{\mathbb{R}_+}\mathbb{G}_0$, where $\rho(\mu_c)$ denotes the rotation vector of $\mu_c$.

Let us consider a cohomology class $c$ such that $-c\in\mathbb{C}_0$ and $|c|\ll 1$. We claim that the $c$-minimal measure is also supported on the fixed point. Indeed, if it is not true, we would have positive average action of $L$: $ A(\mu_c)>0$, since the minimal measure for zero class is assumed unique and supported on the fixed point. By the choice of $c$ one has that $\langle c,\rho(\mu_c)\rangle<0$. Thus, one obtains
$$
A_c(\mu_c)=A(\mu_c)-\langle c,\rho(\mu_c)\rangle>0=A_c(\mu),
$$
it deduces absurdity. For this class $c$, the action of the Lagrangian $L_c=L-\langle c,\dot x\rangle$ along any minimal homoclinic curve $\gamma$ is positive,
$$
A(\gamma)-\langle c,[\gamma]\rangle >0,
$$
namely, the Aubry set for this class is also a singleton. Consequently, $\mu_{c'}$ is also supported on this point if $c'$ is sufficiently close to $c$. This verifies the existence of $n$-dimensional flat.
\end{proof}

\subsection{Around the flat of minimum}
In this section we restrict ourselves to the special case that the system has two degrees of freedom. The task of this section is to study the structure of the Mather sets as well as of the Ma\~n\'e sets in a neighborhood of the fixed point of the Hamiltonian defined in (\ref{averagesystem}), which we rewrite here
$$
G=\frac 12\langle Ay,y\rangle -V(x).
$$
If $x_0$ is a non-degenerate minimal point of $V$, the flat $\mathbb{F}_0$ is a 2-dimensional disk, the matrix
$$
\left (\begin{matrix}0 & A\\
\partial^2_xV & 0
\end{matrix}\right )
$$
has 4 real eigenvalues $\pm\lambda_1,\pm\lambda_2$. Up to translation of coordinates, it is generic that

({\bf H1}): {\it The potential $V$ attains its minimum at $x=0$ only, the Hessian matrix of $V$ at $x=0$ is positive definite. All eigenvalues are different: $-\lambda_2<-\lambda_1<0<\lambda_1<\lambda_2$}.

This hypothesis leads to some hyperbolicity of minimal homoclinic orbits. For each $c\in\mathrm{int}\mathbb{F}_0$, the Aubry set is a singleton which is fixed point.  Let us consider the case: for $c\in\partial\mathbb{F}_0$ the Aubry set $\mathcal{A}(c)=\cup_{t\in\mathbb{R}}\zeta(t)$, where $\zeta$: $\mathbb{R}\to M$ is a minimal homoclinic curve. By the assumption {\bf H1}, the fixed point $z=(x,y)=0$ has its locally stable manifold $W^+$ as well as the locally unstable manifold $W^-$. They intersect each other transversally at the origin. As each homoclinic orbit entirely stays in the stable as well as in the unstable manifolds, along such orbit their intersection can not be transversal in the standard definition, but transversal module the curve:
$$
T_xW^-\oplus T_xW^+=T_xH^{-1}(E)
$$
holds for $x$ is on minimal homoclinic curves. Without danger of confusion, we call the intersection transversal also.

If we denote by $\Lambda^+_i=(\Lambda_{xi},\Lambda_{yi})$ the eigenvector corresponding to the eigenvalue $\lambda_i$, where $\Lambda_{xi}$ and $\Lambda_{yi}$ are for the $x$- and $y$-coordinate respectively, then the eigenvector for $-\lambda_i$ will be  $\Lambda^-_i=(\Lambda_{xi},-\Lambda_{yi})$. it is also a generic condition that

({\bf H2}): {\it with each $g\in H_1(\mathbb{T}^2,\mathbb{Z})$, there is at most one minimal orbit associated, the stable manifold intersects the unstable manifold transversally along each minimal homoclinic orbit. Each minimal homoclinic orbit approaches to the fixed point along the direction $\Lambda_1$: $\dot\gamma(t)/\|\dot\gamma(t)\| \to\Lambda_{x1}$ as $t\to\pm\infty$.}

As the next step, let us study the shape of the disk $\mathbb{F}_0$. Clearly, $0\in\mathrm{int}\mathbb{F}_0$. We divide the boundary of $\mathbb{F}_0$ into two parts $\partial\mathbb{F}_0=\partial^*\mathbb{F}_0\cup (\partial\mathbb{F}_0\backslash \partial^*\mathbb{F}_0)$, where
\begin{equation*}
\partial^*\mathbb{F}_0=\{c\in\partial\mathbb{F}_0: \ \mathcal{M}(c)\backslash\{x=0\}\neq\varnothing \},
\end{equation*}
We use $\mu_c$ to denote the minimal measure which is not supported on the fixed point. As the configuration space is a $2$-torus, all minimal measures, except the one supported on the fixed point, share the same rotation vector, denoted by $\rho(\mu_c)$.

The set $\partial^*\mathbb{F}_0$ may be non-empty. Here is an example:
$$
L=\frac 12\dot x_1^2+\frac {\lambda^2}2\dot x_2^2+V(x)
$$
where $|\lambda|\ne 1$, the potential satisfies the following conditions: $x=0$ is the minimal point of $V$ only; there exist two numbers $d>d'>0$ such that for any closed curve $\gamma$: $[0,1]\to\mathbb{T}^2$ passing through the origin with $[\gamma]\ne 0$ one has
$$
\int_0^1V(\gamma(s))ds\ge d;
$$
$V=d'+(x_2-a)^2$ when it is restricted a neighborhood of circle $x_2=a$ with $a\ne 0$ mod 1. In this case, $\partial\mathbb{F}_0\cap\{c_2=0\}=\{c_1=\pm\sqrt{2d'}\}$. Indeed,
$$
L\pm c_1\dot x_1=\frac 12(\dot x_1\pm c_1)^2+\frac{\lambda^2}2\dot x_2^2+V(x)-\frac 12c_1^2,
$$
the Mather set for $c=(\pm\sqrt{2d'},0)$ consists of the point $x=0$ and the periodic curve $x(t)=(x_{1,0}\mp\sqrt{2d'}t,a)$.

Clearly, the set $\partial^*\mathbb{F}_0$ is closed with respect to $\mathbb{F}_0$. If it is non-empty, the existence of infinitely many $\bar M$-minimal homoclinic orbits has been proved in \cite{Zhe,Zm2}. These orbits are associated with different homological classes. If $\partial^*\mathbb{F}_0=\varnothing$, there are at least three minimal homoclinic orbits to the fixed point.

The existence of homoclinic orbit to some Aubry set is closely related to the existence of the flat of the $\alpha$-function.
\begin{lem}\label{flatlem1}
Given $c,c'\in\mathbb{F}$, let $c_{\lambda}=\lambda c+(1-\lambda)c'$. Then
$$
\tilde{\mathcal{A}}(c)\cap\tilde{\mathcal{A}}(c')=\tilde{\mathcal{A}}(c_{\lambda}), \qquad \forall\ \lambda\in (0,1).
$$
\end{lem}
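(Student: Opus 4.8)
The hypothesis is used only through the following observation: because $\mathbb{F}$ is a flat, the $\alpha$-function is affine on the segment joining $c$ and $c'$, so $\alpha(c_\lambda)=\lambda\alpha(c)+(1-\lambda)\alpha(c')$, and hence the associated critical Lagrangians $\hat L_c:=L-\langle c,\dot x\rangle+\alpha(c)$ obey the pointwise identity $\hat L_{c_\lambda}=\lambda\hat L_c+(1-\lambda)\hat L_{c'}$ on $T\mathbb{T}^n$. The plan is to argue through Ma\~n\'e's description of the Aubry set: $\tilde{\mathcal A}(c)$ consists of the points $(x,\dot x)$ lying on the $\hat L_c$-static curves, where $\gamma:\mathbb{R}\to\mathbb{T}^n$ is $c$-semi-static if $\int_a^b\hat L_c(\gamma)=\Phi_c(\gamma(a),\gamma(b))$ for all $a<b$ and $c$-static if moreover $\Phi_c(\gamma(a),\gamma(b))+\Phi_c(\gamma(b),\gamma(a))=0$, $\Phi_c$ being the Ma\~n\'e potential $\Phi_c(x,y)=\inf_{T>0}\inf_{\zeta(0)=x,\ \zeta(T)=y}\int_0^T\hat L_c$, which is finite, Lipschitz and satisfies $\Phi_c(x,y)+\Phi_c(y,x)\ge\Phi_c(x,x)\ge0$. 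From the displayed identity one reads off the convexity bound $\Phi_{c_\lambda}(x,y)\ge\lambda\Phi_c(x,y)+(1-\lambda)\Phi_{c'}(x,y)$ (an infimum of a convex combination dominates the convex combination of the infima); the reverse bound is false in general, and that asymmetry is the whole difficulty.

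The inclusion $\tilde{\mathcal A}(c_\lambda)\subseteq\tilde{\mathcal A}(c)\cap\tilde{\mathcal A}(c')$ I expect to be purely formal. Let $\gamma$ be an $\hat L_{c_\lambda}$-static curve and fix $a<b$. The static identity $\Phi_{c_\lambda}(\gamma(a),\gamma(b))+\Phi_{c_\lambda}(\gamma(b),\gamma(a))=0$, together with the convexity bound applied to both ordered pairs and the inequality $\Phi_c(u,v)+\Phi_c(v,u)\ge0$, forces $\Phi_c(\gamma(a),\gamma(b))+\Phi_c(\gamma(b),\gamma(a))=0$ and likewise for $c'$. Substituting $-\Phi_c(\gamma(b),\gamma(a))=\Phi_c(\gamma(a),\gamma(b))$, etc., into the convexity bound for the pair $(\gamma(b),\gamma(a))$ then yields $\Phi_{c_\lambda}(\gamma(a),\gamma(b))\le\lambda\Phi_c(\gamma(a),\gamma(b))+(1-\lambda)\Phi_{c'}(\gamma(a),\gamma(b))$, so with the convexity bound this is an equality. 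Finally, $\int_a^b\hat L_{c_\lambda}(\gamma)=\Phi_{c_\lambda}(\gamma(a),\gamma(b))$ (semi-static), $\int_a^b\hat L_{c_\lambda}(\gamma)=\lambda\int_a^b\hat L_c(\gamma)+(1-\lambda)\int_a^b\hat L_{c'}(\gamma)$ (pointwise identity), $\int_a^b\hat L_c(\gamma)\ge\Phi_c(\gamma(a),\gamma(b))$ and the analogue for $c'$ together force $\int_a^b\hat L_c(\gamma)=\Phi_c(\gamma(a),\gamma(b))$ and $\int_a^b\hat L_{c'}(\gamma)=\Phi_{c'}(\gamma(a),\gamma(b))$; so $\gamma$ is both $c$-static and $c'$-static, and its lift lies in $\tilde{\mathcal A}(c)\cap\tilde{\mathcal A}(c')$. (A free byproduct, since every $c_\mu$ with $\mu\in(0,1)$ is a convex combination of $c$ and any other $c_\lambda$ and conversely, is that $\tilde{\mathcal A}(c_\lambda)$ is independent of $\lambda\in(0,1)$.)

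For the reverse inclusion take $(x,\dot x)\in\tilde{\mathcal A}(c)\cap\tilde{\mathcal A}(c')$; since the orbit through a phase point is unique, the $c$-static curve and the $c'$-static curve carrying $(x,\dot x)$ are the same curve $\gamma$, which is thus simultaneously $c$-static and $c'$-static. Semi-staticity for $c_\lambda$ is again formal: $\int_a^b\hat L_{c_\lambda}(\gamma)=\lambda\Phi_c(\gamma(a),\gamma(b))+(1-\lambda)\Phi_{c'}(\gamma(a),\gamma(b))$, and this quantity both dominates $\Phi_{c_\lambda}(\gamma(a),\gamma(b))$ trivially (as $\Phi_{c_\lambda}$ is an infimum over curves from $\gamma(a)$ to $\gamma(b)$, with $\gamma|_{[a,b]}$ one competitor) and is dominated by it by the convexity bound, so equals it. What is left is the static identity $\Phi_{c_\lambda}(\gamma(b),\gamma(a))=-\int_a^b\hat L_{c_\lambda}(\gamma)=\lambda\Phi_c(\gamma(b),\gamma(a))+(1-\lambda)\Phi_{c'}(\gamma(b),\gamma(a))$: the inequality $\ge$ is the convexity bound, and the real content is to produce curves from $\gamma(b)$ to $\gamma(a)$ whose $\hat L_{c_\lambda}$-action tends to that value — that is, curves which are nearly optimal for $c$ and for $c'$ at the same time. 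The plan is to splice the orbit arc $\gamma|_{[b,b+S]}$, a nearly minimizing connection from $\gamma(b+S)$ to $\gamma(a-S)$ constrained to a small neighbourhood of the common set $\tilde{\mathcal A}(c)\cap\tilde{\mathcal A}(c')$, and the orbit arc $\gamma|_{[a-S,a]}$; as $S\to\infty$ the endpoints of the middle piece run into the $\alpha$- and $\omega$-limit sets of $\gamma$, which again lie in $\tilde{\mathcal A}(c)\cap\tilde{\mathcal A}(c')$ and near which both $\Phi_c$ and $\Phi_{c'}$ are small and controlled, while along $\gamma$ the potential is additive on ordered arcs; so the $\hat L_c$-action of the spliced curve tends to $-\int_a^b\hat L_c(\gamma)$, its $\hat L_{c'}$-action to $-\int_a^b\hat L_{c'}(\gamma)$, hence its $\hat L_{c_\lambda}$-action to $-\int_a^b\hat L_{c_\lambda}(\gamma)$, which is the inequality $\le$ we need. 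In the two-degrees-of-freedom situation of this section this common set is tiny — it contains the hyperbolic fixed point $z=0$ and is otherwise built from minimal homoclinic orbits and at most one periodic orbit — so the middle connection can be written explicitly through a neighbourhood of $z=0$, making this step concrete.

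The step I expect to be the genuine obstacle is this last one: transferring optimality backward along a static curve, i.e. selecting a single return path that is near-optimal for $c$ and for $c'$ together. The rest — the whole ``$\subseteq$'' direction and the semi-static half of ``$\supseteq$'' — follows formally from the identity $\hat L_{c_\lambda}=\lambda\hat L_c+(1-\lambda)\hat L_{c'}$ and convexity of the Ma\~n\'e potential.
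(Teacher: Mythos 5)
Your inclusion $\tilde{\mathcal A}(c_\lambda)\subseteq\tilde{\mathcal A}(c)\cap\tilde{\mathcal A}(c')$ is complete and correct, and so is the semi-static half of the reverse inclusion; both do follow formally from the pointwise identity $\hat L_{c_\lambda}=\lambda\hat L_c+(1-\lambda)\hat L_{c'}$ (this is where the flat hypothesis enters, via $\alpha(c_\lambda)=\lambda\alpha(c)+(1-\lambda)\alpha(c')$) together with the convexity bound for the Ma\~n\'e potential. The gap is exactly where you locate it, and it is a real one: the static identity $\Phi_{c_\lambda}(\gamma(b),\gamma(a))\le\lambda\Phi_c(\gamma(b),\gamma(a))+(1-\lambda)\Phi_{c'}(\gamma(b),\gamma(a))$ for a curve that is simultaneously $c$- and $c'$-static. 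Your splicing sketch does not close it. The difficulty is that a minimizing sequence of return curves for $\Phi_c(\gamma(b),\gamma(a))$ and one for $\Phi_{c'}(\gamma(b),\gamma(a))$ need not be interchangeable: for a test curve $\zeta:[0,T]\to\mathbb{T}^n$ the two actions differ by $\langle c'-c,[\zeta]\rangle+(\alpha(c)-\alpha(c'))T$, which depends on the homotopy class and the transit time of $\zeta$, so the $c$-optimal and the $c'$-optimal returns can be genuinely different (in the pendulum picture they wind in opposite directions). Asserting that the middle connection can be ``constrained to a small neighbourhood of $\tilde{\mathcal A}(c)\cap\tilde{\mathcal A}(c')$, near which both potentials are small and controlled'' assumes precisely what must be proved, namely that one single return path is simultaneously near-optimal for $c$ and for $c'$. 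What is true and cheap is that every invariant measure carried by $\overline{\{(\gamma,\dot\gamma)\}}$ is simultaneously $c$-, $c'$- and $c_\lambda$-minimizing, so the limit sets of $\gamma$ meet $\tilde{\mathcal M}(c_\lambda)$; but that only places $\gamma$ in the Ma\~n\'e set of $c_\lambda$, and a $c_\lambda$-semi-static orbit can still be heteroclinic between distinct $c_\lambda$-static classes.

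Be aware that the paper does not carry out this step either: its entire proof is the identity $[A_{c_\lambda}(\gamma|_I)]=\lambda[A_c(\gamma|_I)]+(1-\lambda)[A_{c'}(\gamma|_I)]$ for Massart's action functional, whose nonnegativity and whose vanishing-characterization of the Aubry set are imported from [Mas]; with such a functional both inclusions are one-line consequences, and all of the analytic content you are struggling with is packaged inside the cited result (the equality $\tilde{\mathcal A}(c_t)=\tilde{\mathcal A}(c_0)\cap\tilde{\mathcal A}(c_1)$ for $\alpha$ affine on the segment is Massart's theorem). So you should either invoke that result outright, or complete the splicing in the restricted setting where the lemma is actually applied in Section 2: there $\overline{\gamma}$ contains the hyperbolic fixed point, $\hat L_c$, $\hat L_{c'}$ and $\hat L_{c_\lambda}$ all vanish at it, and the return from $\gamma(b)$ to $\gamma(a)$ can be taken as $\gamma|_{[b,b+S]}$, a short arc near the fixed point, then $\gamma|_{[a-S',a]}$, whose $c$- and $c'$-actions converge simultaneously to the right values because $\int_{-\infty}^{\infty}\hat L_c(\gamma)\,dt=\int_{-\infty}^{\infty}\hat L_{c'}(\gamma)\,dt=0$ for a homoclinic in both Aubry sets. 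As written, the general statement is announced rather than proved.
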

\begin{proof}
Using argument in \cite{Mas}, for any curve $\gamma$: $\mathbb{R}\to M$, we have
$$
[A_{c_{\lambda}}(\gamma|_{I})]=\lambda [A_c(\gamma|_{I})]+(1-\lambda)[A_{c'}(\gamma|_{I})],\qquad \forall\
I\subset\mathbb{R}.
$$
As both $\lambda>0$ and $1-\lambda>0$, one has that $[A_c(\gamma)]=[A_{c'}(\gamma)]=0$ if $[A_{c_{\lambda}}(\gamma)]=0$.
\end{proof}
\begin{lem}\label{flatlem2}
Let $\mathbb{F}_0$ be a 2-dimensional flat, the Mather set is a singleton for each class in the interior of $\mathbb{F}_0$, let $\mathbb{E}_i$ be an edge of $\mathbb{F}_0$, then
$$
\mathcal{A}(c')\supsetneq\mathcal{A}(c)
$$
holds for $c'\in\partial\mathbb{F}_0$ $(\partial\mathbb{E}_i)$ and $c\in\mathrm{int}\mathbb{F}$ $(\mathrm{int} \mathbb{E}_i)$ respectively.
\end{lem}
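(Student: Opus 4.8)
The plan is to establish the inclusion $\mathcal{A}(c)\subseteq\mathcal{A}(c')$ first and then to upgrade it by exhibiting an orbit lying in $\mathcal{A}(c')$ but not in $\mathcal{A}(c)$. For the inclusion I would invoke Lemma \ref{flatlem1}: in both situations $c$ lies in the relative interior of a convex set on which $\alpha_0$ is constant (the flat $\mathbb{F}_0$, respectively the edge $\mathbb{E}_i\subset\mathbb{F}_0$), so one may pick a point $c''$ in that set with $c=\lambda c'+(1-\lambda)c''$, $\lambda\in(0,1)$; since $\alpha_0$ is affine on $[c',c'']\subset\mathbb{F}_0$, Lemma \ref{flatlem1} (applied with the flat taken to be $\mathbb{F}_0$) gives $\tilde{\mathcal{A}}(c)=\tilde{\mathcal{A}}(c')\cap\tilde{\mathcal{A}}(c'')$, hence $\mathcal{A}(c)\subseteq\mathcal{A}(c')$. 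Running the same identity for two classes in $\mathrm{int}\,\mathbb{E}_i$ also shows that $\mathcal{A}$ is constant along $\mathrm{int}\,\mathbb{E}_i$, so it suffices to treat one representative $c$ in each case.

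For the case $c\in\mathrm{int}\,\mathbb{F}_0$, where $\mathcal{A}(c)$ reduces to the hyperbolic fixed point $(0,0)$, I would argue by contradiction, supposing $\mathcal{A}(c')=\{(0,0)\}$ as well. Then $\mathcal{M}(c')=\{(0,0)\}$, and in fact the Ma\~n\'e set satisfies $\tilde{\mathcal{N}}(c')=\{(0,0)\}$: the $\alpha$- and $\omega$-limit sets of any $c'$-semistatic curve lie in $\mathcal{A}(c')=\{(0,0)\}$, so such a curve is homoclinic to $(0,0)$, hence $c'$-static, hence contained in $\mathcal{A}(c')$ --- impossible unless it is the fixed point itself. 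By upper semicontinuity of the Ma\~n\'e set in the cohomology class, $\tilde{\mathcal{N}}(c'')$ then stays inside a fixed neighbourhood $U$ of $(0,0)$ for all $c''$ close to $c'$; shrinking $U$ so that its only flow-invariant subset is $\{(0,0)\}$ (possible by the hyperbolicity of $(0,0)$ guaranteed by (H1)), one gets $\mathcal{M}(c'')\subseteq\tilde{\mathcal{N}}(c'')\subseteq U$, whence $\mathcal{M}(c'')=\{(0,0)\}$ and $\alpha_0(c'')=\min\alpha_0$ for all such $c''$. This would put a whole neighbourhood of $c'$ inside $\mathbb{F}_0$, contradicting $c'\in\partial\mathbb{F}_0$; therefore $\mathcal{A}(c')\supsetneq\mathcal{A}(c)$.

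For the case $c\in\mathrm{int}\,\mathbb{E}_i$, $c'\in\partial\mathbb{E}_i$, I would work with the face structure of $\mathbb{F}_0$. Writing $F_g=\{\bar c\in\mathbb{F}_0:\langle\bar c,g\rangle=A(g)\}$ for $g\in H_1(\mathbb{T}^2,\mathbb{Z})\setminus\{0\}$, one has $\langle\bar c,g\rangle\le A(g)$ throughout $\mathbb{F}_0$, so $F_g$ is a face of dimension $\le 1$, and a minimal orbit in class $g$ --- homoclinic to $(0,0)$, or periodic --- lies in $\mathcal{A}(\bar c)$ exactly when $\bar c\in F_g$. By the previous case, $\mathcal{A}(c)\supsetneq\{(0,0)\}$ already for $c\in\mathrm{int}\,\mathbb{E}_i\subset\partial\mathbb{F}_0$, so some such orbit is active along the edge and $\mathbb{E}_i=F_{g_i}$ for the corresponding class $g_i$ (a face of dimension $\le 1$ meeting $\mathrm{int}\,\mathbb{E}_i$ must equal $\mathbb{E}_i$). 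Using that $\mathbb{F}_0$ is cut out by the inequalities $\langle c,g\rangle\le A(g)$, the maximality of the edge forces, at the endpoint $c'$, some constraint with $F_{g'}\neq\mathbb{E}_i$ to become active, i.e.\ $c'\in F_{g'}\setminus\mathbb{E}_i$; the structural theory of minimal orbits then produces a genuine minimal orbit $\eta_{g'}\in\mathcal{A}(c')$ witnessing it --- a minimal homoclinic to $(0,0)$ if $c'\notin\partial^*\mathbb{F}_0$, and a minimal periodic orbit entering $\mathcal{M}(c')$ if $c'\in\partial^*\mathbb{F}_0$. Since $c\notin F_{g'}$ for $c\in\mathrm{int}\,\mathbb{E}_i$ (the smallest face of $\mathbb{F}_0$ through such a $c$ is $\mathbb{E}_i$, which is not contained in $F_{g'}$), $\eta_{g'}$ lies in $\mathcal{A}(c')\setminus\mathcal{A}(c)$, and $\mathcal{A}(c')\supsetneq\mathcal{A}(c)$.

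The hard part will be the edge case: verifying that $\mathbb{F}_0$ is exactly $\bigcap_{g\ne 0}\{c:\langle c,g\rangle\le A(g)\}$, and then converting the abstract binding constraint at an endpoint of $\mathbb{E}_i$ into an actual minimal orbit that sits in $\mathcal{A}(c')$ but not in $\mathcal{A}(c)$ for $c\in\mathrm{int}\,\mathbb{E}_i$. This rests on the existence and uniqueness theory of minimal homoclinic, periodic, and heteroclinic orbits at the critical energy level under (H1)--(H2), in the spirit of \cite{Be,Zhe,Zm2}, together with a careful accounting of which of these orbits enter $\mathcal{A}(c)$ as $c$ sweeps the face structure of $\mathbb{F}_0$; the most delicate sub-cases --- an edge $\mathbb{E}_i$ contained in $\partial^*\mathbb{F}_0$, or $\partial\mathbb{F}_0$ being tangent to the supporting line of $\mathbb{E}_i$ at the endpoint --- are handled by localizing near the hyperbolic invariant orbit active along $\mathbb{E}_i$ and reducing once more to the situation of the first case.
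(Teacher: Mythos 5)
Your reduction to the inclusion $\mathcal{A}(c)\subseteq\mathcal{A}(c')$ via Lemma \ref{flatlem1} is exactly how the paper sets things up, and your treatment of the case $c\in\mathrm{int}\,\mathbb{F}_0$, $c'\in\partial\mathbb{F}_0$ is a valid (and arguably cleaner) variant of the paper's: you collapse the Ma\~n\'e set to the fixed point, invoke upper semicontinuity and hyperbolicity to force $\mathcal{M}(c'')=\{0\}$ and hence $\alpha(c'')=\min\alpha$ for a whole neighbourhood of $c'$, contradicting $c'\in\partial\mathbb{F}_0$; the paper instead derives the contradiction from the action inequality $-\alpha(c'')=A(\mu_{c''})-\langle c'',\rho(\mu_{c''})\rangle\ge A(\mu_{c'})=-\alpha(c')$ versus $\alpha(c'')>\alpha(c')$ after using the same upper semicontinuity to kill the rotation vector. (Two small caveats: you should justify, as the paper does, that for the mechanical system the Aubry set --- not merely the Mather set --- is the singleton $\{0\}$ on $\mathrm{int}\,\mathbb{F}_0$, via the null-homology argument for a full-dimensional flat; and the step ``semi-static homoclinic to the unique static class $\Rightarrow$ static'' should be stated as the fact you are using.)

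The genuine gap is in the edge case $c'\in\partial\mathbb{E}_i$, $c\in\mathrm{int}\,\mathbb{E}_i$, which is the harder half of the lemma and which you yourself flag as ``the hard part.'' Your plan rests on two unproved claims: that at an endpoint $c'$ of $\mathbb{E}_i=F_{g_i}$ some \emph{other} constraint $F_{g'}$ with $F_{g'}\neq\mathbb{E}_i$ becomes active, and that an active constraint is witnessed by an actual minimal orbit sitting in $\mathcal{A}(c')$. Neither is automatic: even granting $\mathbb{F}_0=\bigcap_{g}\{\langle c,g\rangle\le A(g)\}$, the endpoint of an edge of this (possibly infinite-sided) convex set may be an accumulation point of constraints $g_n$ with $|g_n|\to\infty$, none of which is tight at $c'$, so no single class $g'$ with $c'\in F_{g'}\setminus\mathbb{E}_i$ need exist; and passing from tightness of $\langle c',g'\rangle\le A(g')$ to an orbit of homology $g'$ inside $\mathcal{A}(c')$ requires decomposing the minimizer of $A(g')$ into homoclinics of zero $c'$-action and checking each is static --- none of which appears in your sketch. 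The paper avoids this face-structure machinery entirely: it takes $c^*\in\partial\mathbb{F}_0\setminus\mathbb{E}_i$ arbitrarily close to $c'$, applies the already-proved strict inclusion together with Lemma \ref{flatlem1} to produce a curve $\zeta\subset\mathcal{A}(c^*)\setminus\mathcal{A}(c_0)$, shows by an explicit action/homology comparison (using $\alpha(c)=\alpha(c^*)=0$ and $\langle c^*-c,g(\mathbb{E}_i)\rangle\ne0$) that $[\zeta]\neq g(\mathbb{E}_i)$, so that $\zeta$ stays uniformly far from $\mathcal{A}(c)$, and then lets $c^*\to c'$ and keeps an accumulation point outside $\mathcal{A}(c)$. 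You would need either to carry out this limiting argument or to actually prove the two claims above (including your deferred sub-cases $\mathbb{E}_i\subset\partial^*\mathbb{F}_0$ and the tangential endpoint); as written the edge case is a programme, not a proof.
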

\begin{proof}
Since the Mather set is a singleton for each $c\in\text{\rm int}\mathbb{F}_0$, each orbit in the Aubry set is either the fixed point itself, or a homoclinic orbit to the point with null first homology. Denote by $[\gamma]$ the first homology class of the homoclinic curve $\gamma$ in the Aubry set, then
$$
\int_{-\infty}^{\infty}L(d\gamma(t))dt-\langle c,[\gamma]\rangle=0
$$
holds for all $c\in\text{\rm int}\mathbb{F}_0$. It follows that $\langle c-c',[\gamma]\rangle=0$ for $c,c'\in\text{\rm int}\mathbb{F}_0$. As $\mathbb{F}_0$ shares the same dimension of the configuration space, $[\gamma]=0$. For classical mechanical system of (\ref{averagesystem}), the Aubry set has to be the same as the Mather set for $c\in\text{\rm int}\mathbb{F}_0$. For a class $c'\in\partial\mathbb{F}_0\backslash \partial^*\mathbb{F}_0$, the Aubry set $\mathcal{A}(c')$ contains at least one minimal homoclinic curve with non-zero first homology. Otherwise, for a class $c'\notin\mathbb{F}_0$ very close to $c$, the homology of the Ma\~n\'e set is trivial, the same as that for $c$. It is guaranteed by the upper semi-continuity of Ma\~n\'e set in cohomology class. It follows that $\langle c,\rho(\mu_c)\rangle =\langle c',\rho(\mu_c')\rangle=0$ and
$$
-\alpha(c')=A(\mu_{c'})-\langle c',\rho(\mu_c')\rangle\ge A(\mu_c)=-\alpha(c).
$$
However, as $c'\notin\mathbb{F}_0$, one has $\alpha(c')>\alpha(c)$. The contradiction verifies our claim. If $c'\in\partial^*\mathbb{F}_0$, the certain $c'$-minimal measure $\mu_{c'}$ exists with $\rho(\mu_{c'})\ne 0$. In both cases, $\mathcal{A}(c')\supsetneq\mathcal{A}(c)$ if $c\in\text{\rm int}\mathbb{F}_0$.

Let $\mathbb{E}_i$ be an edge. For $c\in\text{\rm int}\mathbb{E}_i$, the Aubry set contains one or more homoclinic curves, all of them share the same homology class, denoted by $g(\mathbb{E}_i)$ which is of course non-zero. If $\mathcal{M}(c)$ contains other curves, these curves also share the same rotation vector as $\langle c-c',g(\mathbb{E}_i)\rangle=0$ holds for $c,c'\in\text{\rm int}\mathbb{E}_i$.

Let $c'\in\partial\mathbb{E}_i$ and $c\in\text{\rm int}\mathbb{E}_i$, one chooses $c^*\in\partial\mathbb{F}_0 \backslash\mathbb{E}_i$ arbitrarily close to $c'$. As the straight line connecting $c$ to $c^*$ passes through the interior of $\mathcal{F}_0$, we obtain from Lemma \ref{flatlem1} that $\mathcal{A}(c)\cap \mathcal{A}(c^*)=\mathcal{A}(c_0)$ with $c_0\in int\mathbb{E}_i$. For any curve $\zeta$ contained in $\mathcal{A}(c^*)\backslash\mathcal{A}(c_0)$, it follows from the formulation
$$
0=\int (L(d\zeta(t))-\langle c^*,\dot\zeta\rangle)dt=\int (L(d\zeta(t))-\langle c,\dot\zeta\rangle)dt+\langle c-c^*,[\zeta]\rangle
$$
that $\langle c-c^*,[\zeta]\rangle\neq0$ holds. We claim $[\zeta]\neq g(\mathbb{E}_i)$. Let us assume the contrary and consider the case that $\zeta$ is a homoclinic curve and $\mathcal{A}(c)$ contains a homocilinic curve $\gamma$. In this case, by assuming that $\alpha(c)=0$ for $c\in\mathbb{F}_0$, we have
$$
\int_{-\infty}^{\infty} L(d\zeta)dt -\langle c^*,[\zeta]\rangle=0, \qquad \int_{-\infty}^{\infty} L(d\gamma)dt -\langle c,g(\mathbb{E}_i)\rangle=0.
$$
Since the class $c^*$ is not on the straight line containing $\mathbb{E}_i$, we have $\langle c^*-c,g(\mathbb{E}_i)\rangle\ne0$. If $\langle c^*-c,g(\mathbb{E}_i)\rangle>0$ we would have
$$
\int_{-\infty}^{\infty} L(d\gamma)dt-\langle c^*,[\gamma]\rangle=\int_{-\infty}^{\infty} L(d\gamma)dt-\langle c,[\gamma]\rangle- \langle c^*-c,g(\mathbb{E}_i)\rangle<0
$$
If $[\zeta]=g(\mathbb{E}_i)$ and $\langle c^*-c,g(\mathbb{E}_i)\rangle<0$ we would have
$$
\int_{-\infty}^{\infty} L(d\zeta)dt-\langle c,[\zeta]\rangle=\int_{-\infty}^{\infty} L(d\gamma)dt-\langle c^*,[\zeta]\rangle+ \langle c^*-c,g(\mathbb{E}_i)\rangle<0
$$
Both cases are absurd as $\alpha(c)=\alpha(c^*)=0$. Because $[\zeta]\neq g(\mathbb{E}_i)$, some $x^*\in\mathcal{A}(c^*)$ remains far away from $\mathcal{A}(c)$. Let $c^*\to c'$, the accumulation point of these points does not fall into $\mathcal{A}(c)$, it implies  $\mathcal{A}(c')\supsetneq\mathcal{A}(c)$. The proof is similar if $\xi$ as well as $\gamma$ is a curve lying in the Mather set.
\end{proof}

Recall the definition of $G_m$ in the section 2: a first homology class $g\in G_m$ if and only if there exists a minimal homoclinic orbit $d\gamma$ such that $[\gamma]=g$. Let $G_{m,c}\subset G_m$ be defined such that $g\in G_{m,c}$ if and only if there exists a minimal homoclinic orbit $d\gamma$ in $\tilde{\mathcal{A}}(c)$ such that $[\gamma]=g$. We say that there are $k$-types of minimal homoclinic orbits in $\tilde{\mathcal{A}}(c)$ if $G_{m,c}$ contains exactly $k$ elements. For an edge we define $G_{m,\mathbb{E}_i}=G_{m,c}$ for each $c\in\mathrm{int} \mathbb{E}_i$, from the proof of Lemma \ref{flatlem2} one can see that it makes sense.
\begin{theo}\label{flatthm3}
Let $\mathbb{F}_0$ be a two dimensional flat, $\mathcal{M}(c_0)$ is a singleton for $c_0\in int\mathbb{F}_0$. Let $\mathbb{E}_i$ denote an edge of $\mathbb{F}_0$ $($not a point$)$, then

1, either $\mathbb{E}_i\cap\partial^*\mathbb{F}_0 =\varnothing$ or $\mathbb{E}_i\subset\partial^*\mathbb{F}_0$;

2, if $\mathbb{E}_i\cap\partial^*\mathbb{F}_0 =\varnothing$, then $G_{m,\mathbb{E}_i}$ contains exactly one element, if $\mathbb{E}_i\subset\partial^*\mathbb{F}_0$, all curves in $\mathcal{M}(\mathbb{E}_i)\backslash\{0\}$ have the same rotation vector;

3, if $c\in\partial\mathbb{E}_i$ and $c\notin\partial^*\mathbb{F}_0$ then $G_{m,c}$ contains exactly two elements;

4, if $\mathbb{E}_i, \mathbb{E}_j\subset\partial^*\mathbb{F}_0$, then either $\mathbb{E}_i$ and $\mathbb{E}_j$ are disjoint, or $\mathbb{E}_i=\mathbb{E}_j$;

5, if $\mathbb{E}_i\subset\partial^*\mathbb{F}_0$, $\mathcal{M}(c)=\mathcal{M}(c')$ holds for $c\in\partial \mathbb{E}_i$ and $c'\in int \mathbb{E}_i$.
\end{theo}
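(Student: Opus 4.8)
The plan is to prove all five assertions by first pinning down the common structure along the relative interior of an edge and then reading off the boundary behaviour by upper semicontinuity. Normalise $\alpha\equiv 0$ on $\mathbb{F}_0$ and write $n_i$ for the outward conormal of the line carrying $\mathbb{E}_i$. The basic preliminary I would establish is that $\tilde{\mathcal{A}}(c)$ is constant on $\mathrm{int}\,\mathbb{E}_i$: given $c,c'\in\mathrm{int}\,\mathbb{E}_i$ pick $c''\in\mathrm{int}\,\mathbb{E}_i$ with $c$ strictly between $c'$ and $c''$, apply Lemma \ref{flatlem1} along that segment to get $\tilde{\mathcal{A}}(c')\cap\tilde{\mathcal{A}}(c'')=\tilde{\mathcal{A}}(c)$, hence $\tilde{\mathcal{A}}(c)\subseteq\tilde{\mathcal{A}}(c')$, and conclude by symmetry. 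Since any invariant probability measure carried by an Aubry set is a minimal measure for the corresponding class, the Mather set is constant on $\mathrm{int}\,\mathbb{E}_i$ as well; call these $\tilde{\mathcal{A}}(\mathbb{E}_i)$, $\tilde{\mathcal{M}}(\mathbb{E}_i)$. A minimal homoclinic $\gamma$ in $\tilde{\mathcal{A}}(\mathbb{E}_i)$ satisfies $\int L(d\gamma)-\langle c,[\gamma]\rangle=0$ for every $c\in\mathrm{int}\,\mathbb{E}_i$, so $[\gamma]\parallel n_i$; together with the argument in the proof of Lemma \ref{flatlem2} this gives $G_{m,\mathbb{E}_i}=\{g(\mathbb{E}_i)\}$ with $g(\mathbb{E}_i)\parallel n_i$. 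Finally $\partial\alpha(c)\subseteq\mathbb{R}_{\ge 0}n_i$ for $c\in\mathrm{int}\,\mathbb{E}_i$, and $\partial\alpha(c)$ is the convex hull of the rotation vectors of $c$-minimal measures, so $c\in\partial^*\mathbb{F}_0$ iff $\partial\alpha(c)\ne\{0\}$ iff $\tilde{\mathcal{M}}(\mathbb{E}_i)$ is non-trivial; in particular this holds simultaneously at all points of $\mathrm{int}\,\mathbb{E}_i$. Parts 1 and 2 follow on relative interiors, the passage to the whole edge using that $\partial^*\mathbb{F}_0$ is closed in $\mathbb{F}_0$ (and, at the endpoints, Part 4).

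Next I would treat Parts 4 and 5. Let $\mathbb{E}_i\subseteq\partial^*\mathbb{F}_0$ and $c\in\partial\mathbb{E}_i$. Upper semicontinuity of the Aubry set and the constancy above give $\tilde{\mathcal{A}}(\mathbb{E}_i)\subseteq\tilde{\mathcal{A}}(c)$, so a minimal measure carried by $\tilde{\mathcal{M}}(\mathbb{E}_i)$, being an invariant measure on $\tilde{\mathcal{A}}(c)$, is $c$-minimal; hence $\tilde{\mathcal{M}}(\mathbb{E}_i)\subseteq\tilde{\mathcal{M}}(c)$, and since this measure is non-trivial its rotation vector $\rho_i\parallel n_i$ coincides with the common rotation vector $\rho(\mu_c)$ of all non-trivial $c$-minimal measures. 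Conversely any non-trivial $c$-minimal measure $\mu$ then has $\rho(\mu)=\rho_i$ and $\int L\,d\mu=\beta(\rho_i)$; since $\rho_i\in\partial\alpha(c')$ for $c'\in\mathrm{int}\,\mathbb{E}_i$, this forces $\int L\,d\mu-\langle c',\rho(\mu)\rangle=-\alpha(c')$, i.e. $\mu$ is $c'$-minimal, so $\tilde{\mathcal{M}}(c)\subseteq\tilde{\mathcal{M}}(\mathbb{E}_i)$; equality is Part 5. For Part 4, two edges $\mathbb{E}_i,\mathbb{E}_j\subseteq\partial^*\mathbb{F}_0$ sharing a point $c$ but unequal would, by the first half of this argument applied to each, put into $\tilde{\mathcal{M}}(c)$ two non-trivial minimal measures with the distinct rotation vectors $\rho_i\parallel n_i$ and $\rho_j\parallel n_j$, contradicting the fact that on $\mathbb{T}^2$ all non-trivial $c$-minimal measures share one rotation vector.

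For Part 3, let $c\in\partial\mathbb{E}_i$ with $c\notin\partial^*\mathbb{F}_0$. The part of $\partial\mathbb{F}_0$ meeting $\mathbb{E}_i$ at $c$ on the other side cannot be a strictly convex arc (along it the conormal, hence by the preliminary the non-zero integer homology class of the homoclinics in the Aubry set, would vary continuously), and it cannot be an edge inside $\partial^*\mathbb{F}_0$ (Part 4 together with closedness of $\partial^*\mathbb{F}_0$), so $c$ is the common endpoint of two edges $\mathbb{E}_i,\mathbb{E}_j$, both disjoint from $\partial^*\mathbb{F}_0$, with $n_i\ne\pm n_j$. Upper semicontinuity and the preliminary give $g(\mathbb{E}_i),g(\mathbb{E}_j)\in G_{m,c}$, so $\#G_{m,c}\ge 2$. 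For the opposite bound I would first show that any minimal homoclinic $\gamma$ in $\tilde{\mathcal{A}}(c)$ has $[\gamma]$ in the planar cone $C=\mathbb{R}_{\ge0}g(\mathbb{E}_i)+\mathbb{R}_{\ge0}g(\mathbb{E}_j)$: since $A([\gamma])=\langle c,[\gamma]\rangle$ while $A(g)\ge\langle c+v,g\rangle$ for every $v$ in the tangent cone of $\mathbb{F}_0$ at $c$, $[\gamma]$ must lie in the dual cone, which is $C$. Moreover all such $\gamma$ are calibrated by one weak KAM solution of $\tilde{\mathcal{A}}(c)$, so their configuration projections do not cross, and by {\bf H2} each approaches the hyperbolic fixed point tangentially to $\Lambda_{x1}$. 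Combining these with the observation that, for $g\in G_{m,c}$ and an integer multiple $kg=k_ig(\mathbb{E}_i)+k_jg(\mathbb{E}_j)$ with $k_i,k_j>0$, the subadditivity inequality $A(kg)\le A(k_ig(\mathbb{E}_i))+A(k_jg(\mathbb{E}_j))$ is in fact an equality (both sides equal $\langle c,kg\rangle$), one is forced into a decomposition of a minimizing configuration along the minimal homoclinics of classes $g(\mathbb{E}_i)$ and $g(\mathbb{E}_j)$; the non-crossing and the tangency to $\Lambda_{x1}$ then make $\gamma$ coincide near the fixed point with one of these two homoclinics, hence everywhere. So a minimal homoclinic with homology interior to $C$, or a proper positive multiple of $g(\mathbb{E}_i)$ or $g(\mathbb{E}_j)$, is impossible, and $G_{m,c}=\{g(\mathbb{E}_i),g(\mathbb{E}_j)\}$.

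The step I expect to be the real obstacle is precisely this upper bound $\#G_{m,c}\le 2$ in Part 3: turning the non-crossing property, the tangency condition {\bf H2} and the hyperbolicity in {\bf H1} into the rigidity that only the two edge classes survive needs a careful local analysis near the hyperbolic fixed point — controlling how the weak and strong eigen-directions $\Lambda_1,\Lambda_2$ organise the homoclinic loops — together with a genuinely global non-crossing argument on $\mathbb{T}^2$. Everything else reduces to convex analysis of $\alpha$ and $\beta$ on $\mathbb{F}_0$ plus two standard inputs: upper semicontinuity of the Aubry and Ma\~n\'e sets in the cohomology class, and the fact that every invariant probability measure supported on an Aubry set is a minimal measure.
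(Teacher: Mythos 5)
Your treatment of conclusions 1, 2, 4 and 5 is correct and runs along the same lines as the paper's: constancy of the Aubry set on $\mathrm{int}\,\mathbb{E}_i$ (via Lemma \ref{flatlem1}, equivalently the result of Massart the paper cites), the identity $\langle c-c',[\gamma]\rangle=0$ forcing the homology of homoclinics and the rotation vectors of nontrivial measures to be ``parallel to the conormal'' of the edge, and the non-crossing/Lipschitz-graph property on $\mathbb{T}^2$ to exclude two nonparallel rotation directions at one class. Your convex-analytic packaging via $\partial\alpha(c)$ is a clean way to say what the paper says with the graph property, and your explicit verification that a nontrivial measure with the edge's rotation direction is automatically minimal for the interior classes is exactly the paper's argument for conclusion 1. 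One small imprecision: for the passage from $\mathrm{int}\,\mathbb{E}_i\cap\partial^*\mathbb{F}_0=\varnothing$ to $\mathbb{E}_i\cap\partial^*\mathbb{F}_0=\varnothing$ you appeal to ``Part 4 at the endpoints'', which is not the relevant statement; what is needed is the argument you already have (a nontrivial measure at the endpoint would have rotation vector parallel to $g(\mathbb{E}_i)$ by non-crossing with the homoclinic $\gamma_{g(\mathbb{E}_i)}\in\mathcal{A}(c)$, hence would be minimal for interior classes too), so this is cosmetic.

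The genuine issue is the upper bound $\#G_{m,c}\le 2$ in conclusion 3, and you have correctly identified it as the crux. Your normal-cone reduction is a real contribution (the paper's one-line proof of conclusion 3 only yields $\#G_{m,c}\ge 2$): combined with the unimodularity of Lemma \ref{flatlem3} and the positivity $A(g)+A(-g)>0$, it does pin the only possible extra class down to $g''=g(\mathbb{E}_i)+g(\mathbb{E}_j)$. But the final exclusion of $g''$ is not proved by what you write. Equality in subadditivity, $A(g'')=A(g(\mathbb{E}_i))+A(g(\mathbb{E}_j))=\langle c,g''\rangle$, only says that the concatenated chain $\gamma_i\ast\gamma_j$ is also minimizing; it does not decompose a hypothetical \emph{single} minimal homoclinic orbit $\gamma''$ of class $g''$ into the two edge homoclinics. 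And the rigidity step ``non-crossing plus tangency to $\Lambda_{x1}$ forces $\gamma''$ to coincide with $\gamma_i$ or $\gamma_j$ near the fixed point, hence everywhere'' does not hold as stated: the local unstable manifold is a two-dimensional Lagrangian graph, and distinct orbits on it generically leave the fixed point tangent to the \emph{same} weak eigendirection $\pm\Lambda_{x1}$ without coinciding, so three pairwise non-crossing homoclinic loops of classes $(1,0)$, $(0,1)$, $(1,1)$ through the fixed point are not excluded by topology or by the tangency condition alone. Closing this either requires a genericity statement (the coincidence of the single-orbit minimal action in class $g''$ with $A(g(\mathbb{E}_i))+A(g(\mathbb{E}_j))$ is destroyed by an arbitrarily small potential perturbation, in the spirit of {\bf H2}) or a quantitative local estimate near the hyperbolic point of the type used in Lemma \ref{lemma3.1} and Lemma \ref{lem3.3}. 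In fairness, the paper's own proof does not supply this step either, so you have not missed an argument that the text contains; but as a self-contained proof your Part 3 is incomplete at exactly the point you flagged.
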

\begin{proof}
For the conclusion 1, as $\mathcal{A}(c)=\mathcal{A}(c')$ if $c,c'\in\mathrm{int}\mathbb{E}_i$ \cite{Mas}, we only need to consider $c\in\partial\mathbb{E}_i$. If it is not true, there would exist an invariant measure $\mu_c$, not supported on the singleton and minimizing the action
$$
\int Ld\mu_c-\langle \rho(\mu_c),c\rangle=-\alpha(c),
$$
but not minimizing the $c'$-action for $c'\in\text{\rm int}\mathbb{E}_i$. As the configuration space is $\mathbb{T}^2$, the Lipschitz graph property of Aubry set will be violated if the rotation vector of the measure $\rho(\mu_c)$ is not parallel to $g\in G_{m,\mathbb{E}_i}$. So, $\langle\rho(\mu_c), c-c'\rangle=0$ holds for  $c'\in\text{\rm int}\mathbb{E}_i$, thus $\mu_c$ also minimizes the action for $c'\in\text{\rm int}\mathbb{E}_i$. This leads to a contradiction. Since $\partial^*\mathbb{F}_0$ is closed, once $\text{\rm int}\mathbb{E}_i\subset\partial^*\mathbb{F}_0$, then whole edge is also contained in $\partial^*\mathbb{F}_0$.

The conclusion 2 follows from the fact that $\langle c-c',[\gamma]\rangle=0$ holds for any $c,c'\in\text{\rm int}\mathbb{E}_i$ and any $\gamma\in\mathcal{A}(c)$, the conclusion 3 follows from that $\mathcal{A}(c)\varsupsetneq\mathcal{A}(c')$ if $c'\in\text{\rm int}\mathbb{E}_i$.

If the conclusion 4 was not true, for the cohomology class in $\mathbb{E}_i\cap \mathbb{E}_j$ the Mather set would contain two closed circles with different homology, but it violates the Lipschitz graph property of Aubry set. With the same reason we have the conclusion 5.
\end{proof}

By this theorem, each edge $\mathbb{E}_i\subset\partial^*\mathbb{F}_0$ aslo uniquely determines a class $g(\mathbb{E}_i)$ so that for each $c\in\mathrm{int}\mathbb{E}_i$, the rotation vector of each $c$-minimal measure has the form $\nu g(\mathbb{E}_i)$ ($\nu>0$). For brevity, we also use the notation $\mathcal{M}(\mathbb{E}_i)= \mathcal{M}(c)$ for $c\in \mathbb{E}_i$.

\begin{figure}[htp] 
  \centering
  \includegraphics[width=6.6cm,height=3.2cm]{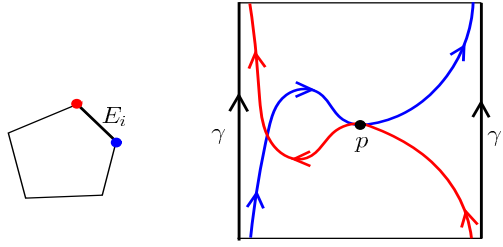}
  \caption{$\mathbb{E}_i\subset\partial^*\mathbb{F}_0$, $\mathcal{M}(\mathbb{E}_i)=\{0\}\cup
  \{\gamma\}$. The blue curve is in $\mathcal{A}(c)$ for $c$ at one end point
  of $\mathbb{E}_i$, the red curve is in $\mathcal{A}(c')$ for $c'$ at another end
  point of $\mathbb{E}_i$.}
  \label{Fig1}
\end{figure}
Given two homology classes $g,g'\in H_1(\mathbb{T}^2,\mathbb{Z})$, we call them adjacent if $g\in G_{m,\mathbb{E}}$, $g'\in G_{m,\mathbb{E}'}$, $\mathbb{E}\cap\partial^*\mathbb{F}_0=\varnothing$, $\mathbb{E}'\cap\partial^*\mathbb{F}_0=\varnothing$, $\mathbb{E}$ and $\mathbb{E}'$ are adjacent. The special topology of two-dimensional torus induces some restrictions on adjacent homologies.
\begin{lem}\label{flatlem3}
Let $\mathbb{E},\mathbb{E}'\subset\partial\mathbb{F}_0\backslash\partial^*\mathbb{F}_0$ be two adjacent edges and assume $c\in \mathbb{E}\cap \mathbb{E}'$. If $(m,n)=g\in G_{m,\mathbb{E}}$ and $(m',n')=g'\in G_{m,\mathbb{E}'}$, then one has that $m'n-mn'=\pm 1$.
\end{lem}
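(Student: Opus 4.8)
The plan is to read the two adjacent minimal homoclinic curves $\gamma$ and $\gamma'$ as generators of a basis of $H_1(\mathbb{T}^2,\mathbb{Z})$, and to deduce $m'n-mn'=\pm 1$ from the fact that, were it not $\pm 1$, one could "pinch off'' a closed curve of strictly smaller $\bar L_c$-action with homology in $\mathrm{span}_{\mathbb Z}\{g,g'\}$ but not lying in $\tilde{\mathcal A}(c)$, contradicting either the minimality along the edges or the Lipschitz graph property on $\mathbb{T}^2$. Concretely, since $\mathbb{E}$ and $\mathbb{E}'$ are adjacent edges meeting at $c$ and both are disjoint from $\partial^*\mathbb{F}_0$, by Theorem \ref{flatthm3} (conclusion 3) the set $G_{m,c}$ consists of exactly the two classes $g=(m,n)$ and $g'=(m',n')$, and the Aubry set $\tilde{\mathcal A}(c)$ is the union of the fixed point $0$, the homoclinic orbit $\gamma$ and the homoclinic orbit $\gamma'$ (plus possibly their "joins'', but those carry homology $g+g'$ and reduce to the constituents actionwise). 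Normalizing $\alpha(c)=0$ on $\mathbb{F}_0$, we have
$$
\int_{-\infty}^{\infty}\bigl(L(d\gamma)-\langle c,\dot\gamma\rangle\bigr)\,dt=0,
\qquad
\int_{-\infty}^{\infty}\bigl(L(d\gamma')-\langle c,\dot\gamma'\rangle\bigr)\,dt=0.
$$

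First I would record the elementary number-theoretic dichotomy: for $g=(m,n)$, $g'=(m',n')\in\mathbb{Z}^2\setminus\{0\}$ the quantity $d:=m'n-mn'$ is exactly $\pm(\text{index of }\mathrm{span}_{\mathbb Z}\{g,g'\}\text{ in }\mathbb Z^2)$ when $g,g'$ are independent, and $d=0$ when they are parallel. The parallel case is excluded first: if $g$ and $g'$ were parallel, then $g$ and $g'$ would be positive multiples of a common primitive class $g_0$ (positivity because rotation vectors on the two edges point into $\mathrm{span}_{\mathbb{R}_+}\mathbb{G}_0$, a cone in a half-space, by Theorem \ref{flatthm1}); but then $\gamma$ and $\gamma'$ would be two distinct minimal homoclinics carrying multiples of the same direction, and concatenating shorter/longer arcs as in the proof of Lemma \ref{flatlem2} produces a closed curve of negative $\bar L_c$-action unless $g=g'$, and $g=g'$ contradicts hypothesis {\bf H2} (at most one minimal orbit per homology class) combined with the edges being genuinely distinct. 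So $d\neq 0$; it remains to show $|d|=1$.

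The heart of the argument is to exclude $|d|\ge 2$. Suppose $|d|\ge 2$. Then $\mathrm{span}_{\mathbb Z}\{g,g'\}$ is a proper finite-index sublattice of $\mathbb Z^2$, so there is a primitive class $h\in\mathbb Z^2$ with $h\notin\mathrm{span}_{\mathbb Z}\{g,g'\}$ but $N h\in\mathrm{span}_{\mathbb Z_+}\{g,g'\}$ for some $N\ge 1$ (choose $h$ "between'' $g$ and $g'$ angularly, inside the cone they span, so that the positive-integer combination works). Concatenating $a$ copies of $\gamma$ with $b$ copies of $\gamma'$ where $ag+bg'=Nh$, $a,b\ge 1$, yields a closed loop through the fixed point with homology $Nh$ and total $\bar L_c$-action $0$; hence $A(Nh)\le 0$, and by subadditivity $N\cdot A(h)\le A(Nh)\le 0$, forcing $A(h)\le 0$, i.e. $h\in\mathbb G_0$ and $h\in G_m$. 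Now $h$ is a new minimal homoclinic direction not equal to $g$ or $g'$; projecting it into $\mathbb{F}_0$ via the argument of Lemma \ref{flatlem2}/Theorem \ref{flatthm3}, $h$ would force $c$ to lie on the interior or boundary of a \emph{third} edge direction meeting at $c$, or — more sharply — two homoclinic loops in directions $g$ and $h$ with $|m\cdot(\text{2nd coord of }h)-\dots|\ge 1$ and a third one would create, on the \emph{two-torus}, three closed curves through one point pairwise non-homotopic and non-crossing only if their homology classes are pairwise unimodular, which $\{g,g',h\}$ is not. This is the Lipschitz-graph / planarity obstruction exactly as invoked in conclusions 4 and 5 of Theorem \ref{flatthm3}: on $\mathbb{T}^2$ a graph over the base cannot carry simple closed minimal curves whose homology classes fail to extend to a $\mathbb Z$-basis. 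Translating "three pairwise non-unimodular classes realized by disjoint-except-at-one-point minimal loops'' into a contradiction with the Lipschitz graph property is the step I expect to be the main obstacle, since it requires being careful that the concatenated loops, after a small perturbation to make them embedded, genuinely violate embeddedness on $\mathbb{T}^2$ unless $|d|=1$; I would handle it by reducing mod the fixed point to the universal cover and using that the lifts of $\gamma$ and $\gamma'$ are asymptotic to $0$ along the single strong direction $\Lambda_{x1}$ (hypothesis {\bf H2}), so two lifts with homology shifts $g$ and $g'$ must be "interleaved'' in $\mathbb R^2$ in a way compatible with a unimodular pair only.

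Finally, with $|d|\ge2$ excluded and $d\neq0$ established, we conclude $m'n-mn'=d=\pm1$, which is the assertion of Lemma \ref{flatlem3}. $\qed$
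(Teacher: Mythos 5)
There is a genuine gap at the heart of your exclusion of $|d|\ge 2$. From $A_c(Nh)\le 0$ you deduce $A_c(h)\le 0$ via the inequality $N\,A_c(h)\le A_c(Nh)$, but concatenation of loops at the fixed point gives exactly the \emph{reverse} inequality, $A_c(Nh)\le N\,A_c(h)$; superadditivity in the multiple $N$ is not available here and is in fact false. Concretely, take $g=(1,0)$, $g'=(1,2)$, $h=(1,1)$, $N=2$: the relation $A_c(2h)=0$ is witnessed by the concatenation $\gamma\ast\gamma'$, which passes through the fixed point at the junction and therefore decomposes into the two homoclinics of classes $g$ and $g'$ --- it produces no loop of class $h$ with zero action, and by conclusion 3 of Theorem \ref{flatthm3} the set $G_{m,c}$ is exactly $\{g,g'\}$, so the third minimal homoclinic direction $h$ that your argument needs never materializes. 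Even granting $h$, you yourself flag the final step (turning ``three pairwise non-unimodular classes realized by loops meeting only at the fixed point'' into a contradiction with the Lipschitz graph property) as the main obstacle and leave it unproved; as written the proof does not close.

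The paper's argument is much more direct and uses only the two curves you already have. Since $\gamma$ and $\gamma'$ are distinct orbits in $\tilde{\mathcal A}(c)$, the Lipschitz graph property forces their projections to meet only at the fixed point. Lift to the universal cover $\mathbb{R}^2$: the preimage of $\bar\gamma=\gamma\cup\{0\}$ is a family of properly embedded lines, the one through a lattice point $p$ meeting $\mathbb{Z}^2$ exactly in $p+\mathbb{Z}g$, and similarly for $\bar\gamma'$ with $g'$. Any line of the first family must intersect any line of the second (they escape to infinity in independent directions), and every such intersection must project to the fixed point, i.e.\ be a lattice point lying in $(p+\mathbb{Z}g)\cap(q+\mathbb{Z}g')$ for arbitrary $p,q\in\mathbb{Z}^2$. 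This forces $\mathbb{Z}g+\mathbb{Z}g'=\mathbb{Z}^2$, i.e.\ $m'n-mn'=\pm1$. I would suggest replacing the variational construction of $h$ by this intersection-pattern argument; your preliminary reduction (excluding $d=0$ via irreducibility, formula (\ref{flateq1}) and hypothesis \textbf{H2}) can be kept as is.
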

\begin{proof}
The Aubry set $\tilde{\mathcal{A}}(c)$ contains homoclinic orbits with two two classes $(m,n)$ and $(m',n')$, both are irreducible. Guaranteed by the Lipschitz graph property, these curves intersect each other only at the fixed point. In the universal covering space $\mathbb{R}^2$, each curve in the lift of the homoclinic curves are determined by the equation
$$
mx_1+nx_2=k,\qquad m'x_1+n'x_2=k'.
$$
The solution of the equations corresponds to the intersection point which are lattice points in $\mathbb{Z}^2$ for any $(k,k')\in\mathbb{Z}^2$. To guarantee this property, the necessary and sufficient condition is $mn'-m'n=\pm 1$.
\end{proof}

For each indivisible homological class $0\neq g\in H_1(\mathbb{T}^2,\mathbb{Z})$, either $\mathscr{L}_{\beta}(\lambda g)\notin\partial\mathbb{F}_0$ for any $\nu>0$, or some $\lambda_0>0$ exists such that $\mathscr{L}_{\beta}(\lambda_0 g)\in\partial^*\mathbb{F}_0$.

In the first case, $\mathscr{L}_{\beta}(\lambda g)\to\partial\mathbb{F}_0\backslash\partial ^*\mathbb{F}_0$ as $\lambda\downarrow 0$, at least one periodic curve $\gamma_{\lambda}\subset\mathcal{M}(c)$ exists for $c\in\mathscr{L}_{\beta}(\lambda g)$ with $\lambda>0$. It is impossible that $d(c,\mathscr{L}_{\beta}(\lambda g))\to 0$ holds for $c\in\partial^*\mathbb{F}_0$, as in that case certain $c$-minimal measure $\mu_c$ would exist so that $\rho(\mu_c)$ is not parallel to $[\gamma]$. It will violate the Lipschitz property. Generically, $(\gamma_{\lambda},\dot\gamma_{\lambda})$ is hyperbolic and $\mathscr{L}_{\beta} (\lambda g)$ is an interval if $\lambda>0$. If $g\in G_{m,\mathbb{E}_i}$, then $\mathscr{L}_{\beta}(\lambda g)$ approaches to certain edge $\mathbb{E}_i$. If $g=k_ig_i+k_{i+1}g_{i+1}$ with indivisible $(k_i,k_{i+1})\in\mathbb{Z}^2_+$, $g_i\in G_{m,\mathbb{E}_i}$, $g_{i+1}\in G_{m,\mathbb{E}_{i+1}}$, $\mathbb{E}_i$ and $\mathbb{E}_{i+1}$ are two adjacent edges. When $\lambda\downarrow 0$, the interval will shrink to a vertex where $\mathbb{E}_i$ is joined to $\mathbb{E}_{i+1}$, and we have a sequence of closed orbits $\{d\gamma_{\lambda}\} =\{\cup_t(\gamma_{\lambda}(t),\dot\gamma_{\lambda}(t))\}$. Its Kuratowski upper limit set is obviously in the Aubry set for certain $c\in\partial\mathbb{F}_0\backslash\partial^*\mathbb{F}_0 $, thus, consists of minimal homoclinic orbits to the fixed point. As $c$ approaches the vertex, the Mather set will get close to a set of figure eight:
$$
\mathcal{M}(c)\to \gamma_i\ast\gamma_{i+1},
$$
where $\gamma_{\ell}\subset\mathcal{A}(E_{\ell})$ is a minimal homoclinic orbit such that $[\gamma_{\ell}]=g_{\ell}$ for ${\ell}=i,j$.
\begin{figure}[htp]
  \centering
  \includegraphics[width=7cm,height=3.0cm]{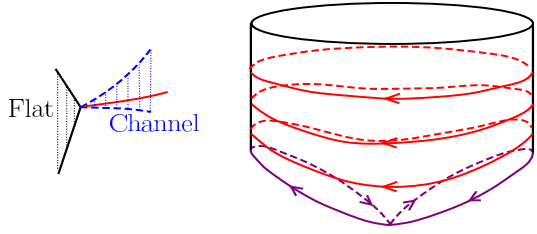}
  \caption{For each $c$ on the red line, the Aubry set is a closed orbit on the cylinder, it approaches a curve of figure eight, $k_i$-folded over $\gamma_i$ and $k_{i+1}$-folded over $\gamma_{i+1}$.}
  \label{fig2}
\end{figure}

To be more precise, let us consider it in the finite covering space $\bar M=\bar k_1\mathbb{T} \times\bar k_2\mathbb{T}$ where $\bar k_m=k_ig_{im}+k_{i+1}g_{i+1,m}$ for $m=1,2$ if we write $g_j=(g_{j1},g_{j2})$ for $j=i,i+1$. Let $\sigma$: $\{1,2,\cdots,k_i+k_{i+1}\}\to \{i,{i+1}\}$ be a permutation such that the cardinality $\#\sigma^{-1}(i)=k_i$ and $\#\sigma^{-1}({i+1})=k_{i+1}$. The lift of homoclinic curve $\gamma_i$ as well as $\gamma_{i+1}$ to $\bar M$ contains several curves. Pick up one curve $\bar\gamma_{\sigma(1)}$ in the lift of $\gamma_{\sigma(1)}$, it determines a unique curve $\bar\gamma_{\sigma(2)}$ such that the end point of $\bar\gamma_{\sigma(1)}$ is the starting point of $\bar\gamma_{\sigma(2)}$, and so on.
We shall see that there exists a unique permutation $\sigma$ (up to a translation) such that, as $c$ approaches the vertex through in channel, each Aubry class in $\mathcal{A}(c,\bar M)$ approaches (up to a translation) the curve $\bar\gamma_{\sigma(1)}\ast\bar\gamma_{\sigma(2)} \ast\cdots\ast\bar\gamma_{\sigma(k_i+k_{i+1})}$ without folding. Here $\mathcal{A}(c,\bar M)$ denotes the Aubry set with respect to $\bar M$. Although $\mathcal{A}(c)$ is made up by one periodic curve, $\mathcal{A}(c,\bar M)$ may contain several periodic curves.

As the minimal curve $\gamma_{\lambda}$ is periodic with the homological class $[\gamma_{\lambda}]=k_ig_i+k_{i+1}g_{i+1}$, the permutation $\sigma$: $\mathbb{Z}\to \{i,i+1\}$ is $(k_i+k_{i+1})$-periodic. Since $k_i$ is prime to $k_{i+1}$, we have $k_i=k_{i+1}=1$ if $k_i=k_{i+1}$.
\begin{lem}\label{flatlem4}
The permutation is uniquely $($up to a translation$)$ determined by $k_i$ and $k_{i+1}$. If $k_{i}>k_{i+1}$, the following holds for $j=1,\cdots,k_i+k_{i+1}$
\begin{align*}
&\sigma(j+j_0)=i,  \hskip 1.0 true cm \text{\rm if}\ \ j=2,4,\cdots,2k_{i+1}, 2k_{i+1}+1,\cdots,k_{i}+k_{i+1},\\
&\sigma(j+j_0)=i+1,\hskip 0.35 true cm \text{\rm if}\ \ j=1,3,\cdots,2k_{i+1}-1.
\end{align*}
\end{lem}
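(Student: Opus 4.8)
The plan is to recast the statement as a combinatorial fact about monotone lattice paths and then to single out the correct path by minimality.

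First I would normalize the geometry. By Lemma~\ref{flatlem3} one has $\det(g_i,g_{i+1})=\pm1$, so after an $SL(2,\mathbb{Z})$ change of the angle variables we may assume $g_i=(1,0)$ and $g_{i+1}=(0,1)$; then $g=(k_i,k_{i+1})$ and $\bar M=k_i\mathbb{T}\times k_{i+1}\mathbb{T}$. Lift the minimal homoclinic curves $\gamma_i,\gamma_{i+1}$ and all their integer translates to $\bar M$. Taking $\bar c=\mathbb{E}_i\cap\mathbb{E}_{i+1}$ to be the common vertex, $\tilde{\mathcal A}(\bar c)$ contains both $\gamma_i$ and $\gamma_{i+1}$, so by the Lipschitz graph property these arcs pairwise meet only at the lifts of the fixed point; hence they assemble into an embedded grid $\mathcal G\subset\bar M$ whose vertices are the lifts of the fixed point and whose edges are the two families of homoclinic arcs. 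The minimal orbit $\gamma_\lambda$ is connected, and its lift to $\bar M$ consists of several closed curves permuted transitively by the deck group, so all of them have the same combinatorial type; consequently the limiting concatenation determines a word $\sigma\colon\mathbb{Z}_n\to\{i,i+1\}$, $n=k_i+k_{i+1}$, which is well defined up to a cyclic shift and which contains exactly $k_i$ letters $i$ and $k_{i+1}$ letters $i+1$ (count the total homology).

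Next I would use the hypothesis ``without folding'': each limiting curve is an embedded closed walk on $\mathcal G$. Lifting such a walk to the universal cover $\mathbb{R}^2$ and observing that its net displacement over one period is $(k_i,k_{i+1})$ while it uses only $k_i$ horizontal and $k_{i+1}$ vertical edges, every horizontal step runs eastward and every vertical step northward; thus the lift is a monotone lattice staircase of direction $(k_i,k_{i+1})$ whose step word is $\sigma$. A short computation in the sublattice $k_i\mathbb{Z}\times k_{i+1}\mathbb{Z}$ shows that closedness \emph{and} embeddedness of the walk in $\bar M$ rule out precisely the orderings in which all the letters $i$ are consecutive or all the letters $i+1$ are consecutive. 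Together with the transitivity of the deck action this already yields the ``uniqueness up to a translation'' once the selection of $\sigma$ below is added; note that $\gcd(k_i,k_{i+1})=1$ forces the period of $\sigma$ to be exactly $n$, so the translation ambiguity is the shift by $j_0\in\mathbb{Z}/n$.

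To pin down $\sigma$ and obtain the displayed formula I would invoke that $\gamma_\lambda$ minimizes the action among curves carrying rotation vector $\lambda g$. As $\lambda\downarrow0$ the orbit spends a time of order $\lambda_1^{-1}\lvert\log\lambda\rvert$ near the fixed point between two consecutive excursions, and by {\bf H1}, {\bf H2} every excursion enters and leaves tangent to $\Lambda_{x1}$ along the weak directions; hence the action of $\gamma_\lambda$ is $k_iA(\gamma_i)+k_{i+1}A(\gamma_{i+1})$ plus saddle-passage corrections that depend only on how the excursion types succeed one another. A separatrix-map type expansion of these corrections, compared across the orderings left admissible by the previous step, selects a single one; running this comparison inductively — which amounts to peeling off a leading $(i{+}1\ i)$-block, reducing $(k_i,k_{i+1})$ to $(k_i-1,k_{i+1}-1)$, with the degenerate case $(k,0)$ giving the block of $k$ letters $i$ — produces, for $k_i>k_{i+1}$, the word $(i{+}1\ i)$ repeated $k_{i+1}$ times followed by $k_i-k_{i+1}$ further letters $i$, which is exactly the assertion; the base cases $k_{i+1}=1$ and $k_i=k_{i+1}=1$ are immediate.

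The main obstacle is the last step: turning minimality of $\gamma_\lambda$ into a selection rule among the finitely many combinatorially admissible orderings. This requires a quantitative estimate for the action accumulated in a long homoclinic excursion in terms of the entry and exit phases along the weak stable and unstable directions, and then enough bookkeeping to check that the inductive reduction on $(k_i,k_{i+1})$ is consistent with that estimate. The topological part — the first two steps — is routine once Lemma~\ref{flatlem3} and the graph property are available.
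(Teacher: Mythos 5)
Your normalization and the observation that minimality forces $\gamma_\lambda$ to be an embedded monotone concatenation of the two homoclinic loops do track the paper's setup, but the proof has a genuine gap exactly where you flag ``the main obstacle,'' and the paper does not close that gap the way you propose. Your combinatorial step extracts too little from embeddedness: excluding only the cyclic words in which all the letters $i$ (equivalently all the letters $i+1$) are consecutive leaves more than one candidate as soon as $k_i+k_{i+1}\ge 7$ --- for $(k_i,k_{i+1})=(5,2)$ the necklaces with gap patterns $(2,5)$ and $(3,4)$ between the two letters $i+1$ both survive your criterion. The paper's corresponding step is sharper and purely topological. First, from the absence of self-intersections of the minimal curve near the fixed point it deduces that once some passage enters \emph{and} leaves through the sections $I^{\pm}_i$, no passage can enter and leave through $I^{\pm}_{i+1}$; with $k_i>k_{i+1}$ this pins down the multiset of passage types (the set $J$ of cardinality $k_i-k_{i+1}$). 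Second --- and this is the decisive step you are missing --- it straightens $\gamma_\lambda$ by a torus homeomorphism $T$ acting trivially on $H_1(\mathbb{T}^2,\mathbb{Z})$ into the linear representative of the class $k_ig_i+k_{i+1}g_{i+1}$, and reads off the entire cyclic order of $\sigma$ from the order in which that line meets the two transversal circles $\Gamma_h=T^{-1}\{x_1=\tfrac12\}$ and $\Gamma_v=T^{-1}\{x_2=\tfrac12\}$, i.e.\ from the elementary arithmetic of $\bigl[mk_{i+1}/k_i\bigr]$. Minimality is used only through simplicity of the curve; no action comparison appears anywhere.

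By contrast, your selection mechanism --- a separatrix-map expansion of the saddle-passage corrections and an inductive action comparison over the surviving orderings --- is not carried out, and the quantitative input it would require (control of the passage correction in terms of entry/exit phases along the weak directions, uniformly as $\lambda\downarrow 0$, to an order fine enough to separate distinct orderings) is precisely what you concede you do not have. So as written the proposal establishes neither the uniqueness statement nor the displayed formula; the inductive ``peeling'' that produces the word $(i{+}1\,i)^{k_{i+1}}i^{\,k_i-k_{i+1}}$ is asserted, not derived. The repair is to replace the entire analytic step by the topological one: a minimal closed curve on $\mathbb{T}^2$ is simple, hence isotopic to the straight line in its homology class, and the cyclic sequence of its intersections with two fixed transversal circles is an isotopy invariant; that single fact determines $\sigma$ up to the translation $j_0$ and yields the formula directly.
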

\begin{proof}
By the assumption, there exists only one minimal homoclinic curve $\gamma_j$ such that $[\gamma_j]=g_j$ for $j=i,i+1$. Because of the lemma \ref{flatlem3}, we can assume that $g_i=(1,0)$ and $g_{i+1}=(0,1)$ by choosing suitable coordinates on $\mathbb{T}^2$. We choose two sections $I^-$ and $I^+$ in a small neighborhood of the origin such that, emanating from the origin, these homoclinic curves pass through $I^-$ and $I^+$ successively before they return back to the origin as $t\to\infty$, see the figure below. In the section $I^{\pm}$ we choose disjoint subsections $I^{\pm}_i$ and $I^{\pm}_{i+1}$ such that the curve $\gamma_j$ passes through $I^{\pm}_{j}$ for $j=i,i+1$.

Let $\gamma_{\lambda}$ be the minimal periodic curve with rotation vector $\lambda g$. For small $\lambda>0$, $\gamma_{\lambda}$ falls into a small neighborhood of these two homoclinic curves. So it has to pass either through $I^{\pm}_i$ or through $I^{\pm}_{i+1}$. Let $t_{\ell}^{\pm}$ be the time for $\gamma_{\lambda}$ passing through $I^{\pm}$ with $\cdots<t_{{\ell}-1}^-<t_{\ell}^+<t_{\ell}^-<t_{{\ell}+1}^+<\cdots$, and it does not tough these sections whenever $t\ne t_k^{\pm}$. By definition, the period of the curve equals $t_{k_1+k_2}^{\pm}-t_0^{\pm}$.   If the curve intersects $I^+_i$ at $t^+_{\ell}$ and intersects $I^-_{i+1}$ at $t^-_{\ell}$, then the segment $\gamma_{\lambda}|_{[t_{{\ell}-1}^-,t_{\ell}^+]}$ keeps close to $\gamma_i$ and $\gamma_{\lambda}|_{[t_{\ell}^-,t_{\ell+1}^+]}$ keeps close to $\gamma_{i+1}$, so one has $\gamma_{\lambda}(t^-_{\ell-1})\in I_i^-$ and $\gamma_{\lambda}(t^+_{\ell+1})\in I^+_{i+1}$.
\begin{figure}[htp]
  \centering
  \includegraphics[width=7.8cm,height=3.5cm]{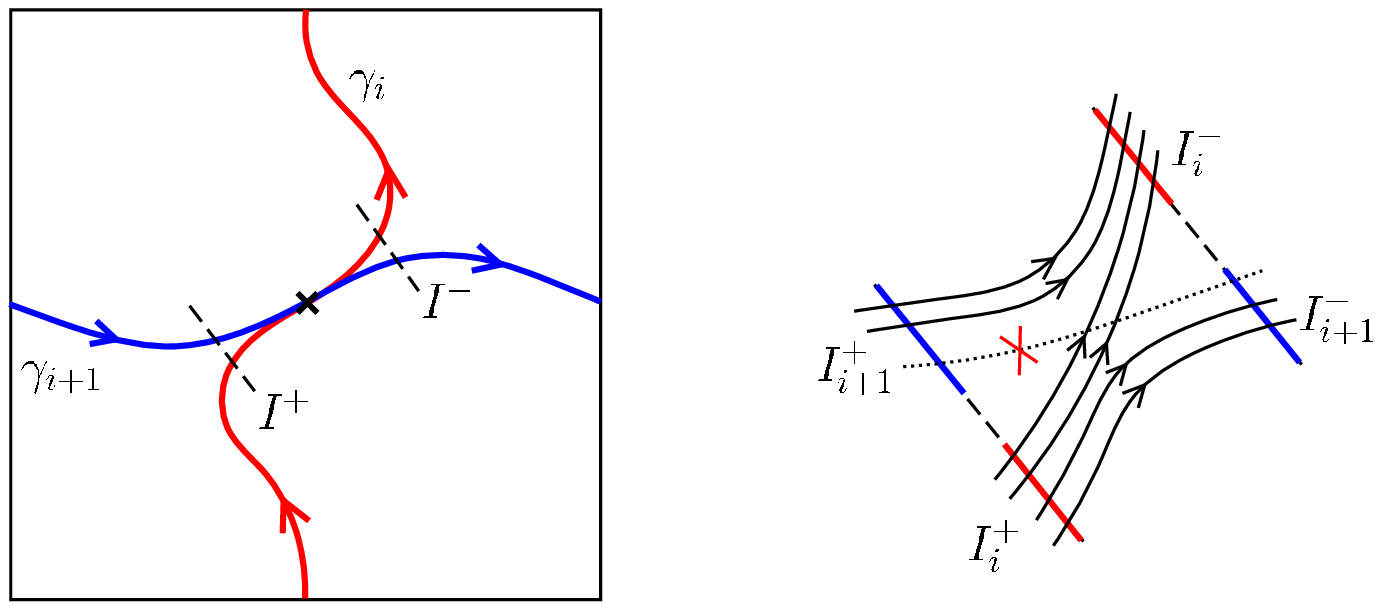}
  \label{fig4}
\end{figure}
Since the curve $\gamma_{\lambda}$ is minimal, it does not have self-intersection. Thus, once there exists $t_j^{\pm}$ such that $\gamma_{\lambda}(t^+_j)\in I^+_i$ and $\gamma_{\lambda}(t^-_j)\in I^-_i$, then there does not exist $t_{j'}^{\pm}$ such that $\gamma_{\lambda}(t^+_{j'})\in I^+_{i+1}$ and $\gamma_{\lambda}(t^-_{j'})\in I^-_{i+1}$. Therefore, there is a set $J\subset\{1,2,\cdots,k_i+k_{i+1}\}$ with cardinality $\#(J)=k_i-k_{i+1}$ such that for $j\in J$ one has $\gamma_{\lambda}(t^{\pm}_{j})\in I^{\pm}_{i}$, for $j\notin J$ one either has $\gamma_{\lambda}(t^{+}_{j})\in I^{+}_{i}$ and $\gamma_{\lambda}(t^{-}_{j})\in I^{-}_{i+1}$ or has $\gamma_{\lambda}(t^{+}_{j})\in I^{+}_{i+1}$ and $\gamma_{\lambda}(t^{-}_{j})\in I^{-}_{i}$.

By introducing coordinate transformation on $T$: $\mathbb{T}^2\to\mathbb{T}^2$ such that  $T_*g=g$ $\forall\,g\in H_1(\mathbb{T}^2,\mathbb{Z})$, let us think the curve $T\gamma_{\lambda}$ as a straight line projected down to the unit square, a fundamental domain of $\mathbb{T}^2$.  Starting from a point $z^h_0=(x_0,0)$, the line successively reaches to the points $z^h_1=(x_1,0),\cdots,z^h_m=(x_m,0),\cdots,z^h_{k_i}=z^h_{0}$ where $x_m=(x_0+mk_{i+1}/k_i\mod 1,0)$ with small $x_0>0$. To connect the point $(x_{m-1},0)$ to the point $(x_m,1)$, the curve $T\gamma_{\lambda}$ does not touch the vertical boundary lines if
$$
\Big[(m-1)\frac{k_{i+1}}{k_i}\Big]=\Big[m\frac{k_{i+1}}{k_i}\Big],
$$
where $[a]$ denote the largest integer not bigger than the number $a$, and it has to pass through the vertical lines at some point $z^v_m=(0\mod 1,y_m)$ if
$$
\Big[(m-1)\frac{k_{i+1}}{k_i}\Big]+1=\Big[m\frac{k_{i+1}}{k_i}\Big].
$$
We define an order $\prec$ for these $k_i+k_{i+1}$ points such that $z^h_j\prec z^h_k$ iff $j<k$ and $z^h_j\prec z^v_{j+1}\prec z^h_{j+1}$ iff $[jk_i/k_{j+1}]+1=[(j+1)k_i/k_{j+1}]$.

Returning back to the original coordinates, the curve $\gamma_{\lambda}$ falls into a neighborhood of the curves $\gamma_{i}$ and $\gamma_{i+1}$, intersects the horizontal line $\Gamma_h=T^{-1}\{(x_1,x_2):x_1=\frac12\mod 1\}$ at $T^{-1}z_j^h$ and intersects the vertical line $\Gamma_v=T^{-1}\{(x_1,x_2):x_2=\frac12\mod 1\}$ at $T^{-1}z_j^v$, $[\Gamma_h]=g_{i+1}$ and $[\Gamma_v]=g_i$. Naturally, the map $T$ induces the order among these points: $T^{-1}z^{h,v}_j\prec T^{-1}z^{h,v}_{\ell}$ if and only if $z^{h,v}_j\prec z^{h,v}_{\ell}$. If the curve passes the point $T^{-1}z^h_j$ at $t\in(t^-_j,t^+_{j+1})$, the segment $\gamma_{\lambda}|_{[t^-_j,t^+_{j+1}]}$ falls into a neighborhood of $\gamma_i$, otherwise, it falls into a neighborhood of $\gamma_{i+1}$. In this way, we obtained a unique permutation $\sigma$ up to a translation.
\begin{figure}[htp]
  \centering
  \includegraphics[width=7cm,height=3.5cm]{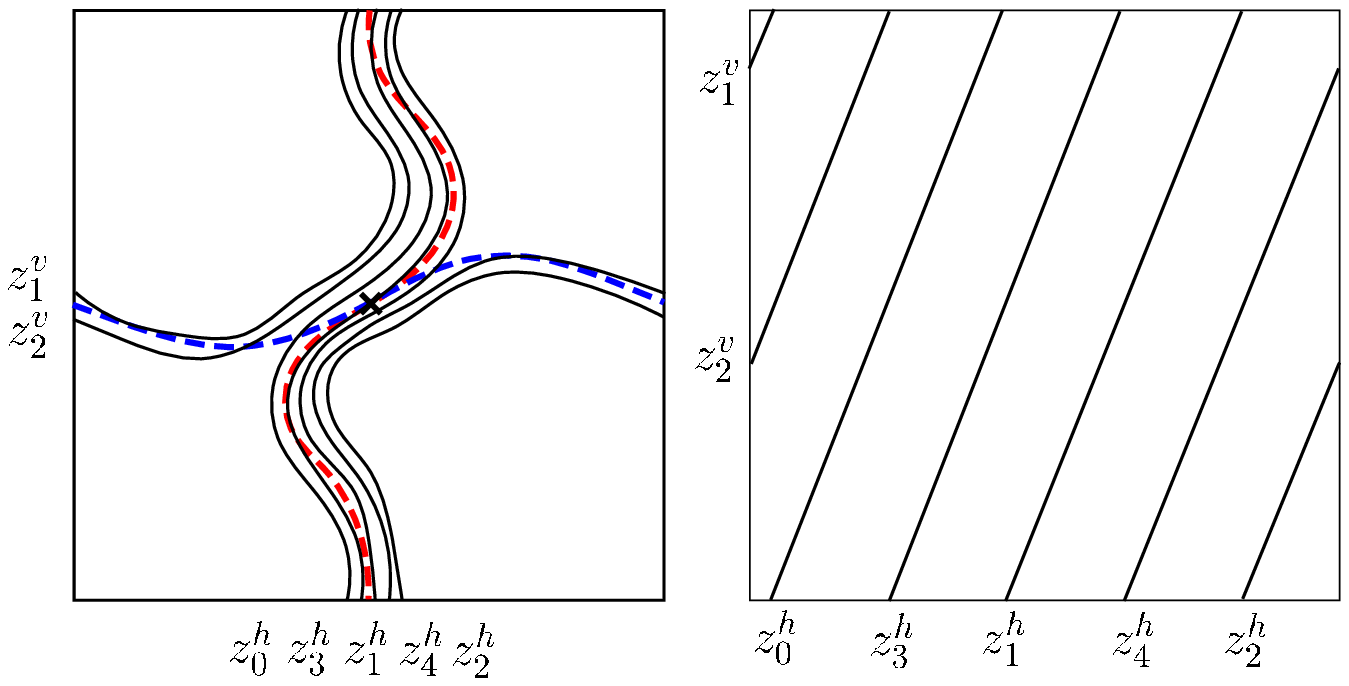}
  \label{fig5}
\end{figure}
\end{proof}

\section{Cohomology equivalence around the flat $\mathbb{F}_0$}
\setcounter{equation}{0}
Let us start with the Hamiltonian $G_{\epsilon}$ defined by Formula (\ref{mainsystems}). Given two homology classes $g,g'\in H_1(\mathbb{T}^2,\mathbb{Z})$, Theorem 1.1 of \cite{C15} confirms the existence of two wedge-shaped channels $\mathbb{W}$ and $\mathbb{W}'$ which reach to the boundary of the annulus
$$
\mathbb{A}_0=\Big\{c\in H^1(M,\mathbb{R}):0<\alpha_{G_{\epsilon}}(c)-\min\alpha_{G_{\epsilon}}<D\epsilon^{d}
\Big\},
$$
Each channel corresponds to a normally hyperbolic invariant cylinder (NHIC), for each cohomology class in the channel, the Aubry set lies on the NHIC. However, these channels may not reach to the flat $\mathbb{F}_0$. Therefore, a notable difficulty arises that these cylinders are separated by an annulus $\mathbb{A}_0$ and how to connect them by a transition chain? It is the problem of crossing double resonance.

It is the goal of this section to find an annulus $\mathbb{A}\supsetneq\mathbb{A}_0$ such that both $\mathbb{W}$ and $\mathbb{W}'$ extend into. This annulus admits a foliation of circles of cohomology equivalence, so these two channels are connected by transition chains of cohomology equivalence.

\subsection{Cohomology equivalence for autonomous systems}
It is a new version of cohomology equivalence we used in \cite{LC}. The concept of cohomology equivalence was introduced in \cite{M93} for the first time, but it does not apply in interesting problems of autonomous system. The new version is defined not on the whole configuration space, but on a section. For $n$-torus, the section is chosen as non-degenerately embedded section $(n-1)$-dimensional torus. We call $\Sigma_c$ non-degenerately embedded  ($n-1$)-dimensional torus by assuming a smooth injection $\varphi$: $\mathbb{T}^{n-1}\to\mathbb{T}^n$ such that $\Sigma_c$ is the image of $\varphi$, and the induced map $\varphi_*$: $H_1(\mathbb{T}^{n-1}, \mathbb{Z})\to H_1(\mathbb{T}^{n},\mathbb{Z})$ is an injection.

Let $\mathfrak{C}\subset H^1(\mathbb{T}^n,\mathbb{R})$ be a connected set where we are going to define $c$-equivalence. For each class $c\in \mathfrak{C} $, we assume that there exists a non-degenerate embedded $(n-1)$-dimensional torus $\Sigma_c\subset\mathbb{T}^n$ such that each $c$-semi static curve $\gamma$ transversally intersects $\Sigma_c$. Let
$$
\mathbb{V}_{c}=\bigcap_U\{i_{U*}H_1(U,\mathbb{R}): U\, \text{\rm is a neighborhood of}\, \mathcal {N}(c) \cap\Sigma_c\},
$$
here $i_U$: $U\to M$ denotes inclusion map. $\mathbb{V}_{c}^{\bot}$ is defined to be the annihilator of $\mathbb{V}_{c}$, i.e. if $c'\in H^1(\mathbb{T}^n,\mathbb{R})$, then $c'\in \mathbb{V}_{c}^{\bot}$ if and only if $\langle c',h \rangle =0$ for all $h\in \mathbb{V}_c$. Clearly,
$$
\mathbb{V}_{c}^{\bot}=\bigcup_U\{\text{\rm ker}\, i_{U}^*: U\, \text{\rm is a neighborhood of}\, \mathcal {N}(c) \cap\Sigma_c\}.
$$
Note that there exists a neighborhood $U$ of $\mathcal {N}(c)\cap\Sigma_c$ such that $\mathbb{V}_c=i_{U*}H_1(U,\mathbb{R})$ and $\mathbb{V}_{c}^{\bot}=\text{\rm ker}i^*_U$ (see \cite{M93}).

We say that $c,c'\in H^1(M,\mathbb{R})$ are $c$-equivalent if there exists a continuous curve $\Gamma$: $[0,1]\to \mathfrak{C}$ such that $\Gamma(0)=c$, $\Gamma(1)=c'$, $\alpha(\Gamma(s))$ keeps constant along $\Gamma$, and for each $s_0\in [0,1]$ there exists $\delta>0$ such that $\Gamma(s)-\Gamma(s_0)\in \mathbb{V}_{{\Gamma}(s_0)}^{\bot}$ whenever $s\in [0,1]$ and $|s-s_0|<\delta$.

Let $\{e_i\}_{1\le i\le n-1}$ be the standard basis of $H_1(\mathbb{T}^{n-1}, \mathbb{Z})$, one obtains $n$-dimensional vectors $\{g_{i+1}=\varphi_*(e_i)\in H_1(\mathbb{T}^{n}, \mathbb{Z})\}_{1\le i\le n-1}$. Because $\varphi$ is injection, there is a vector $g_1\in\mathbb{Z}^n$ such that the $n\times n$ matrix $G=(g_1,g_2,\cdots, g_n)$ is uni-module, i.e. $\text{\rm det}G=\pm 1$. In new coordinates system $x\to G^{-1}x$, the Lagrangian $\tilde L(\dot x,x)=L(G\dot x,Gx)$ is also $2\pi$-periodic in $x$. In new coordinates, let $\bar M=\mathbb{R}\times\mathbb{T}^{n-1}= \{x_1\in\mathbb{R},(x_2,\cdots,x_n)\in \mathbb{T}^{n-1}\}$ be a covering space of $\mathbb{T}^n$, $\pi:\bar M\to M=\mathbb{T}^n$. The lift of $\Sigma_c$, $\pi^{-1}(\Sigma_c)$ has infinitely many compact components $\{\Sigma_c^i\}_{i\in\mathbb{Z}}$. If $\varphi$ is linear, $\Sigma_c^i=\{x_1=2i\pi\}$. For the section $\Sigma=\{x_1=0\mod 1\}$ we have $\pi^{-1}(\Sigma)=\cup_{k\in\mathbb{Z}}\{x_1=k\}$ while for $\Sigma=\{x_1=x_2\}$, the lift $\pi^{-1}(\Sigma)$ consists of only one connected component.

\subsection{Dynamics around fixed point}

Before involved into the details how such cohomology equivalence is found, let us establish a lemma which is technically crucial for the follow-up demonstration. For $\theta>0$ and $\xi\in\mathbb{R}^n\backslash\{0\}$, we define a cone
$$
C(\xi,\theta,d)=\{x\in\mathbb{R}^n: |\langle x,\xi\rangle\ge\theta\|\xi\|\|x\|,\|x\|=d\}.
$$
\begin{lem}\label{lemma3.1}
Assume that $(x,y)=(0,0)\in\{ H^{-1}(0)\}$ is a hyperbolic fixed point for $\Phi_H^t$, where all eigenvalues are real and different:
$$
\mathrm{Spec}\{J\nabla H\}=\{\pm\lambda_1,\cdots,\pm\lambda_n; 0<\lambda_1<\cdots<\lambda_n\}.
$$
Let $(\xi^{\pm}_i,\eta^{\pm}_i)$ denote the eigenvector for $\pm\lambda_i$, where $\xi^{\pm}_i$ is for the $x$-coordinates, $\eta^{\pm}_i$ is for the $y$-coordinates. Let $(x(t),y(t))\subset\{H^{-1}(0)\}$ be an orbit such that $x(t)$ passes through a ball $B_{\delta}(0)\subset\mathbb{R}^n$ such that $x(-T)\in\partial B_{\delta}(0)$, $x(T)\in\partial B_{\delta}(0)$ and $x(t)\in\mathrm{int}B_{\delta}(0)$ $\forall$ $t\in(-T,T)$. Then, for small $\delta>0$ and $\theta\ge\frac 12$, there exist sufficiently large $T_{\theta}>0$ such that for $T\ge T_{\theta}$ one has
$$
(x(-T),x(T))\notin C(\xi^+_1,\theta,\delta)\times C(\xi^-_1,\theta,\delta).
$$
If $x(\pm T)\in C(\xi^{\pm}_1,\theta,\delta)$, for $T\to\infty$, then, there exist constant $d^{\mp}_2,\cdots,d^{\mp}_n$ such that
$$
\Big|x(\mp T)-\sum_{i=2}^nd^{\mp}_i\xi^{\mp}_i\Big|=o(\delta).
$$
\end{lem}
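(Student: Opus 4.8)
The plan is to reduce the statement to the linearized hyperbolic dynamics at $0$ by a normal form, and then let the constraint imposed by the zero--energy level kill the ``slow'' action variable.

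\emph{Adapted coordinates.} Since $0$ is hyperbolic with real, simple spectrum $\{\pm\lambda_i\}$, I first pass to linear eigencoordinates and then to a symplectic normal form near $0$. Under a non-resonance condition on $\lambda_1,\dots,\lambda_n$ — a generic condition, compatible with the paper's hypotheses — the Hamiltonian becomes $H=\mathcal H(I_1,\dots,I_n)$ with $I_i:=u_iv_i$ and $D\mathcal H(0)=\mathrm{diag}(\lambda_1,\dots,\lambda_n)$, in coordinates $(u,v)=((u_i)_i,(v_i)_i)$ with $E^u=\{v=0\}$, $E^s=\{u=0\}$, the $+\lambda_i$-direction spanned by $\partial_{u_i}$ (its $x$-component being $\xi_i^+$, up to the identity-tangent part of the change of variables) and the $-\lambda_i$-direction by $\partial_{v_i}$ (with $x$-component $\xi_i^-$). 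Two features of this form drive the proof: each $I_i$ is a first integral, and along any orbit staying in a small ball one has $u_i(t)=u_i(0)e^{\nu_it}$, $v_i(t)=v_i(0)e^{-\nu_it}$ with $\nu_i=\partial_{I_i}\mathcal H(I)=\lambda_i+O(\delta^2)$ constant along the orbit.

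\emph{A priori bounds and the role of the energy.} Let the orbit satisfy the hypotheses, so it stays in $B_\delta(0)$ in the $x$-variable — hence in a $C\delta$-ball in phase space, since on $H^{-1}(0)$ near $0$ one has $|y|\asymp|x|$ — for $t\in(-T,T)$ with $\|x(\pm T)\|=\delta$. From $u_i(T)=u_i(-T)e^{2\nu_iT}$ and $|u_i(T)|\le C\delta$ one gets $|u_i(-T)|\le C\delta e^{-2\nu_iT}$, so $|u_i(0)|\le C\delta e^{-\nu_iT}$; symmetrically $|v_i(0)|\le C\delta e^{-\nu_iT}$, whence $|I_i|\le C\delta^2e^{-2\nu_iT}$. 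Moreover $\|u(-T)\|=O(\delta e^{-2\nu_1T})=o(\delta)$ and $\|v(T)\|=o(\delta)$, so
$$x(-T)=\sum_i v_i(-T)\,\xi_i^-+o(\delta),\qquad x(T)=\sum_i u_i(T)\,\xi_i^++o(\delta).$$
Since the orbit lies on $\{\mathcal H=0\}$ and $\mathcal H(I)=\sum_i\lambda_iI_i+O(|I|^2)$,
$$\lambda_1I_1=-\sum_{i\ge2}\lambda_iI_i+O(|I|^2)=O\!\big(\delta^2e^{-2\nu_2T}\big)+O\!\big(\delta^4e^{-4\nu_1T}\big),$$
i.e. $I_1=o\!\big(\delta^2e^{-2\nu_1T}\big)$, and therefore
$$|u_1(T)\,v_1(-T)|=|I_1|\,e^{2\nu_1T}=o(\delta^2)\qquad(T\to\infty).$$
This is the crux: a zero--energy orbit that hugs the saddle long enough has a negligibly small slow action, hence cannot simultaneously have $u_1(T)$ and $v_1(-T)$ of order $\delta$.

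\emph{Deducing the two conclusions.} Assume $x(-T)\in C(\xi_1^+,\theta,\delta)$ and $x(T)\in C(\xi_1^-,\theta,\delta)$ with $\theta\ge\tfrac12$. Expanding $x(-T)$ and $x(T)$ in the bases $\{\xi_i^-\}$ resp. $\{\xi_i^+\}$ as above, the cone condition — together with the fact that $\xi_1^\pm$ makes a definite angle with the span of the remaining eigenvectors, which is automatic for \eqref{averagesystem} where the $\xi_i$ are $\partial^2V$-orthogonal — forces $|v_1(-T)|\ge c\delta$ and $|u_1(T)|\ge c\delta$ for some $c>0$. Then $|u_1(T)v_1(-T)|\ge c^2\delta^2$, contradicting the previous estimate once $T\ge T_\theta$; this is the first assertion. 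For the second, if only $x(T)\in C(\xi_1^+,\theta,\delta)$ is assumed, the same reasoning gives $|u_1(T)|\ge c\delta$, hence $|v_1(-T)|=|u_1(T)v_1(-T)|/|u_1(T)|=o(\delta)$; substituting into $x(-T)=\sum_i v_i(-T)\xi_i^-+o(\delta)$ yields $x(-T)=\sum_{i\ge2}v_i(-T)\xi_i^-+o(\delta)$, i.e. the asserted expansion with $d_i^-:=v_i(-T)$, and symmetrically with $-T$ and $T$ exchanged.

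\emph{Main obstacle.} The delicate ingredient is the exact exponential law $u_i(t)=u_i(0)e^{\nu_it}$ with $\nu_i$ constant along the orbit: over the \emph{unbounded} transit time $2T$ a naive Gronwall estimate of the nonlinear corrections degrades (near $t=0$ one has $|u_1|\sim\delta e^{-\nu_1T}$, so the remainder-to-$u_1$ ratio is not small), and this is exactly why one must pass to the integrable, action-dependent normal form in which each $I_i$ is conserved; absent analyticity one replaces it by a high-order finite normal form combined with a sharpened inclination-lemma estimate, and controlling that remainder uniformly in $T$ is the real work. A secondary point, used in the last step, is the passage from the Euclidean cone $C(\xi_1^\pm,\theta,\delta)$ to a lower bound on the corresponding normal-form coordinate; the hypothesis $\theta\ge\tfrac12$ enters here and requires the weak eigendirection to be sufficiently far from the span of the strong ones, which holds with room to spare in the mechanical setting of interest.
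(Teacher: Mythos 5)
Your proposal isolates exactly the mechanism the paper uses: on the zero energy level, an orbit that shadows the saddle for time $2T$ must have its ``weak'' hyperbolic product $u_1(T)v_1(-T)$ (the paper's $-2\lambda_1^2b_1^+b_1^-$, your $\lambda_1 I_1e^{2\nu_1T}$) dominated by the strictly faster-decaying contributions of the strong directions, which is incompatible with both endpoints lying in the cones about $\xi_1^{\pm}$. The identity $H_2=-2\sum\lambda_i^2b_i^-b_i^+$ evaluated at the mid-time in the paper is literally your $\mathcal H(I)=\sum\lambda_iI_i+O(|I|^2)=0$, so the energy bookkeeping and the deduction of both conclusions are the same.

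Where you diverge is the linearization device, and this is where the gap sits. The paper does \emph{not} pass to an integrable normal form: it invokes Hartman--Grobman to write the solution of the two-point boundary value problem as the linear solution plus an $o(\delta)$ (respectively $o(\delta e^{-\lambda_1T})$ near $t=0$) error, solves for $b_i^{\pm}$ directly from the boundary data, and substitutes into $H$. Your route instead requires (i) a non-resonance condition on $\lambda_1,\dots,\lambda_n$ that is not among the lemma's hypotheses (the lemma only assumes the eigenvalues are real and simple, and in the application only $\lambda_1\neq\lambda_2$ is generic), and (ii) the \emph{exact} conservation of the $I_i$, i.e.\ a convergent Birkhoff normal form, which is unavailable for the $C^r$ Hamiltonians of the paper; with only a finite-order normal form the drift $\dot I_1=O(\|z\|^N)$ integrates to $O(\delta^N)$ over the transit, and for fixed $\delta$ this is \emph{not} $o(\delta^2e^{-2\nu_1T})$ as $T\to\infty$, so the crux estimate $|u_1(T)v_1(-T)|=o(\delta^2)$ does not follow from what you have written. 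You flag this yourself as ``the real work,'' but it is precisely the content of the lemma in the regime $T\to\infty$ at fixed $\delta$; to close it you would either have to carry out the sharpened inclination-lemma/remainder analysis you allude to, or replace the normal form by the paper's device of linearizing the flow itself and solving the boundary value problem, where the needed smallness is measured relative to the boundary data rather than to an absolute power of $\delta$.
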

\begin{proof}
By certain symplectic coordinate transformation, the Hamiltonian is assumed to have the normal form
$$
H(x,y)=\sum_{i=1}^n\frac 12\Big(y_i^2-\lambda^2_ix^2_i\Big)+P_3(x,y)
$$
where $P_3(x,y)=O(\|(x,y)\|^3)$ is a higher order term. By applying Hartman-Grobman Theorem we see that the Hamiltonian flow is $C^0$-conjugate to the linear flow
\begin{equation}\label{equation3.1}
\begin{aligned}
\bar x_i(t)=&b^-_ie^{-\lambda_it}+b^+_ie^{\lambda_it},\\
\bar y_i(t)=&\lambda_i(-b^-_ie^{-\lambda_it}+b^+_ie^{\lambda_it}).
\end{aligned}
\end{equation}
From the proof of the theorem one can see that, restricted in the ball $B_{\delta}(0)$, the conjugacy is $O({\delta})$-close to identity. Therefore, the solution of the Hamiltonian equation takes the form
\begin{equation}\label{equation3.2}
\begin{aligned}
x_i(t)=&b^-_ie^{-\lambda_it}+b^+_ie^{\lambda_it}+o(\delta),\\
y_i(t)=&\lambda_i(-b^-_ie^{-\lambda_it}+b^+_ie^{\lambda_it})+o(\delta),
\end{aligned}
\end{equation}
provided $|x(t)|,|y(t)|\le\delta$ holds for $|t|\le t_0$. Given a boundary condition $x(-T),x(T)$ such that $|x(\pm T)|=\delta$ with small $\delta$, there is a unique solution $x(t),y(t)$ such that $|x(t)|,|y(t)|\le\delta$ for $t\in [-T,T]$ provided $T>0$ is suitably large.

For $\bar x(\pm T)=\delta$ with $T\gg 1$, then $|\bar x(0)|, |\bar y(0)|\le O(\delta e^{-\lambda_1T})$. In a ball of diameter of order $O(\delta e^{-\lambda_1T})$ centered at the origin, the conjugacy is $O(\delta e^{-\lambda_1T})$-close to identity, so we have
\begin{equation}\label{equation3.3}
\begin{aligned}
x_i(0)=&b^-_i+b^+_i+o(\delta e^{-\lambda_1T}),\\
y_i(0)=&\lambda_i(b^+_i-b^-_i)+o(\delta e^{-\lambda_1T}).
\end{aligned}
\end{equation}
Substituting $(x,y)$ with the formula (\ref{equation3.3}) in the Hamiltonian we obtain a constraint for the constants $b^{\pm}_i$:
\begin{equation}\label{equation3.4}
\begin{aligned}
H(x(t),y(t))=&-2\sum_{i=1}^n\lambda^2_ib^-_ib^+_i+(b^+_i\pm b^-_i)o(\delta e^{-\lambda_1T})+o(\delta e^{-\lambda_1T})^2\\
&+P_3(b^+_i+ b^-_i+o(\delta e^{-\lambda_1T}),\lambda_i(b^+_i-b^-_i)+o(\delta e^{-\lambda_1T})).
\end{aligned}
\end{equation}
Let us investigate how the constants $b^{\pm}_i$ depend on the boundary condition $x(T)=(x^+_1,x^+_2,\cdots,x^+_n)\in\partial B_{\delta}(0)$, $x(-T)=(x^-_1,x^-_2,\cdots,x^-_n)\in\partial B_{\delta}(0)$ by assuming
\begin{equation}\label{equation3.5}
\min\{|x^-_1|,|x^+_1|\}\ge\frac{\delta}2.
\end{equation}

For $\theta=1/2$, $(x(-T),x(T))\in C(\xi^-_1,\theta,\delta)\times C(\xi^+_1,\theta,\delta)$ implies (\ref{equation3.5}) holds. Because the curve $x|_{[-T,T]}$ stays inside of the ball $B_{\delta}(0)$ and $T$ is sufficiently large, the orbit $(x,y)|_{[-T,T]}$ stays near the stable and unstable manifold of the fixed point. From (\ref{equation3.2}) one immediately obtains the solution
\begin{equation}\label{equation3.6}
\begin{aligned}
b^-_i=&e^{-\lambda_iT}(x^-_i+o(\delta))(1+O(e^{-\lambda_1T})),\\
b^+_i=&e^{-\lambda_iT}(x^+_i+o(\delta))(1+O(e^{-\lambda_1T})).
\end{aligned}
\end{equation}
For sufficiently large $T>0$, it follows from the assumption (\ref{equation3.5}) and the equation (\ref{equation3.6}) that
$$
|b^{\pm}_1|\ge\frac{\delta}3e^{-\lambda_1T},
$$
and
$$
|b^{\pm}_i|\le 2\delta e^{-\lambda_iT}, \qquad \forall\ i=2,\cdots,n.
$$
Since $\lambda_1<\lambda_i$ for each $i\ge 2$, $|b^{\pm}_i|\ll |b^{\pm}_1|$ if $T$ is sufficiently large. In this case, we obtain from (\ref{equation3.4}) that
$$
|H(x(t),y(t))|>|\lambda^2_1b^+_1b^-_1|>0.
$$
It contradicts the assumption that $(x(t),y(t))\in\{H^{-1}(0)\}$. Let $T\to\infty$, one easily see the last conclusion.
\end{proof}

Because of the relation between the eigenvalues $|\lambda_1|<|\lambda_2|$, on the unstable manifold there exist exactly two orbits $(\gamma^-(t),\dot\gamma^-(t))$ and $(\gamma'^-(t),\dot\gamma'^-(t))$ which approach the origin as $t\to -\infty$ in the direction of $\Lambda_{2,x}$:
\begin{equation}\label{direction2}
\lim_{t\to -\infty}\frac{\dot\gamma^-(t)}{|\dot\gamma^-(t)|}=\Lambda_{2,x}, \qquad \lim_{t\to -\infty}\frac{\dot\gamma'^-(t)}{|\dot\gamma'^-(t)|}=-\Lambda_{2,x}.
\end{equation}
On the stable manifold there exist exactly two orbits $(\gamma^+(t),\dot\gamma^+(t))$ and $(\gamma'^+(t),\dot\gamma'^+(t))$ which approach the origin as $t\to\infty$ in the direction of $\Lambda_{2,x}$:
\begin{equation}\label{direction2'}
\lim_{t\to\infty}\frac{\dot\gamma^+(t)}{|\dot\gamma^+(t)|}=\Lambda_{2,x}, \qquad \lim_{t\to\infty}\frac{\dot\gamma'^+(t)}{|\dot\gamma'^+(t)|}=-\Lambda_{2,x}.
\end{equation}
These curves intersect the circle $\partial B_{\delta}(0)$ at four points: $x^{\pm}_{\delta}$ is the intersection point of $\gamma^{\pm}$ with $\partial B_{\delta}(0)$ and $x'^{\pm}_{\delta}$ is the intersection point of $\gamma'^{\pm}$ with $\partial B_{\delta}(0)$. Obviously, for small $\delta>0$, $x^+_{\delta}$ is close to $x^-_{\delta}$ and $x'^+_{\delta}$ is close to $x'^-_{\delta}$.

\subsection{The Ma\~n\'e set for $c\in\partial^* \mathbb{F}_0$: irrational case}

As the first step, let us consider the Hamiltonian $G$ defined in (\ref{averagesystem}) and study all cases when the Ma\~n\'e set covers the whole configuration space. We are going to prove
\begin{theo}\label{th3.1}
For Hamiltonian of $($\ref{averagesystem}$)$ with the hypothesis $(${\bf H1}$)$, $c\in\partial^*\mathbb{F}_0$ such that $\rho(\mu_c)$, the rotation vector of $\mu_c$, is irrational and $\mathcal{N}(c)=\mathbb{T}^2$, then there exists $c$-semi static curve which approaches the fixed point in the direction of $\pm\Lambda_{2,x}$ as $t\to\infty$ or $t\to-\infty$.
\end{theo}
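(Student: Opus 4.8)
The plan is to play the energy constraint $\{G=0\}$ — in the quantitative form of Lemma \ref{lemma3.1} — against $c$-semi-static curves that linger near the fixed point, extract from them a limit $c$-semi-static curve that leaves the fixed point in almost exactly the $\Lambda_{2,x}$ direction, and then remove the ``almost'' by a second limiting procedure across shrinking scales. After adding a constant to $L$ we normalize $\alpha(c)=0$; since $V$ attains its minimum at $x=0$ with $V(0)=0$, the fixed point $z_0=(0,0)$ lies on $G^{-1}(0)$, and by {\bf H1} it is hyperbolic with real eigenvalues $\pm\lambda_1,\pm\lambda_2$, $0<\lambda_1<\lambda_2$, carrying the four distinguished orbits $\gamma^{\pm},\gamma'^{\pm}$ that leave $z_0$ along $\pm\Lambda_{2,x}$. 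Fix $\delta_*>0$ so small that on $B_{\delta_*}(0)$ the flow of $G$ lies in the regime of Lemma \ref{lemma3.1}, that the maximal invariant set of that flow inside $\overline{B_{\delta_*}(0)}$ is $\{z_0\}$, and that $\overline{B_{\delta_*}(0)}\cap\mathrm{supp}\,\mu_c=\varnothing$ — the last being possible since $\rho(\mu_c)$ is irrational, so $z_0$ is isolated from $\mathrm{supp}\,\mu_c$.

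Suppose, as we may, that none of $\gamma^{\pm},\gamma'^{\pm}$ is $c$-semi-static (otherwise there is nothing to prove). Since $\rho(\mu_c)$ is irrational while $z_0\in\mathcal{A}(c)$, the Aubry set $\mathcal{A}(c)$ has empty interior in $\mathbb{T}^2$, and so does the set of points lying on $c$-semi-static curves asymptotic to $z_0$ (were it otherwise, such curves would fill a neighbourhood of $z_0$ in $W^{+}\cup W^{-}$ and $\gamma^{\pm}$ or $\gamma'^{\pm}$ would be among them, against our assumption). Hence $\mathcal{N}(c)=\mathbb{T}^2$ provides, for each $j$, a point $p_j$ with $0<|p_j|<1/j$ lying neither in $\mathcal{A}(c)$ nor on a semi-static curve asymptotic to $z_0$, and a $c$-semi-static curve $\zeta_j$ with $\zeta_j(0)=p_j$; then $\zeta_j$ is not static, not constant, and its maximal interval $[a_j,b_j]\ni 0$ on which it stays in $B_{\delta_*}(0)$ is finite. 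Since $\zeta_j$ comes within $1/j$ of $z_0$, the dwell time $|a_j|+b_j\to\infty$. Lemma \ref{lemma3.1} with $\theta=\tfrac12$ and $\delta=\delta_*$ forbids $\zeta_j(a_j)\in C(\xi_1^-,\tfrac12,\delta_*)$ and $\zeta_j(b_j)\in C(\xi_1^+,\tfrac12,\delta_*)$ simultaneously; furthermore, substituting $(\ref{equation3.6})$ into $(\ref{equation3.4})$ and letting $|a_j|+b_j\to\infty$, the $\lambda_1$-term dominates the energy sum, so the product of the $\Lambda_{1,x}$-components of $\zeta_j(a_j)$ and of $\zeta_j(b_j)$ tends to $0$. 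Passing to a subsequence we may assume that it is, say, $\zeta_j(b_j)$ whose $\Lambda_{1,x}$-component tends to $0$; being on $\partial B_{\delta_*}(0)$, it then converges to $q_{\delta_*}:=\pm\delta_*\Lambda_{2,x}/\|\Lambda_{2,x}\|$. (If it is $\zeta_j(a_j)$ instead, the rest runs with the local stable manifold $W^{+}$ in place of $W^{-}$ below and produces $\gamma^{+}$ or $\gamma'^{+}$.)

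The phase points $(\zeta_j(b_j),\dot\zeta_j(b_j))$ lie in the compact set $\{G=0\}\cap(\partial B_{\delta_*}(0)\times\mathbb{R}^2)$; along a subsequence they converge, and the orbit $\zeta^{*}$ through the limit is a $C^1_{loc}$-limit of the $\zeta_j$, hence $c$-semi-static. Since $|a_j|+b_j\to\infty$ we get $\zeta^{*}|_{(-\infty,0]}\subset\overline{B_{\delta_*}(0)}$, so the $\alpha$-limit set of $\zeta^{*}$ is $\{z_0\}$ and $\zeta^{*}|_{(-\infty,0]}\subset W^{-}$; but because $\zeta^{*}$ crosses $\partial B_{\delta_*}(0)$ at the point $q_{\delta_*}$, which points in the pure $\Lambda_{2,x}$ direction, its eigencoordinate along the slow unstable direction $\Lambda_{1,x}$ is only $O(\delta_*^2)$, not necessarily zero. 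To remove this remainder, rerun the construction with $B_{\delta_*}$ replaced by $B_{\rho_k}(0)$ for a sequence $\rho_k\downarrow 0$, reusing the curves $\zeta_j$. This yields, for each $k$, a $c$-semi-static curve $\zeta^{*,k}\subset W^{-}$ whose position at distance $\rho_k$ from $z_0$ is $\pm\rho_k\Lambda_{2,x}/\|\Lambda_{2,x}\|$, hence whose slow-unstable eigencoordinate is $O(\rho_k^2)\to 0$. Reparametrizing so that $\zeta^{*,k}(0)\in\partial B_{\delta_*/2}(0)$ and using continuous dependence on initial conditions along the invariant manifold $W^{-}$, the phase point $(\zeta^{*,k}(0),\dot\zeta^{*,k}(0))$ converges to that of $\gamma^{-}$ (or $\gamma'^{-}$) on $\partial B_{\delta_*/2}(0)$; therefore $\zeta^{*,k}\to\gamma^{-}$ (resp.\ $\gamma'^{-}$) in $C^1_{loc}$, so $\gamma^{-}$ (resp.\ $\gamma'^{-}$) is $c$-semi-static. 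This contradicts the standing assumption, so one of $\gamma^{\pm},\gamma'^{\pm}$ is $c$-semi-static; in particular there is a $c$-semi-static curve approaching $z_0$ along $\pm\Lambda_{2,x}$ as $t\to\pm\infty$.

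I expect the crucial step to be the upgrade from the soft dichotomy of Lemma \ref{lemma3.1} — at least one endpoint avoids the slow cone — to the rigid statement that, along a subsequence, that endpoint converges \emph{exactly} to the $\Lambda_{2,x}$ direction; this forces one to exploit the identity $(\ref{equation3.4})$ along the whole sequence, and it is here, and only here, that confinement to $\{G=0\}$ together with the strict inequality $\lambda_1<\lambda_2$ is indispensable. The second genuine point is the bootstrap across scales $\rho_k\downarrow 0$, whose only subtlety is to anchor the reparametrization at a \emph{fixed} distance $\delta_*/2$ so that the limit curve is non-constant. Verifying that the relevant $\alpha$- and $\omega$-limit sets reduce to $\{z_0\}$ inside $\overline{B_{\delta_*}(0)}$, that semi-staticity passes to $C^1_{loc}$-limits, and that the exceptional base points form a set of empty interior, is routine.
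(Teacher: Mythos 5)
Your overall route is genuinely different from the paper's: instead of building the structure theory of the endpoint sets $I^{\pm}_{c,\delta}$ (their disjointness, separation and connectedness, Lemmas \ref{lem3.2}--\ref{lem3.4} and Corollary \ref{cor3.1}) and then doing a case analysis, you run a single contradiction argument, extracting limit semi-static curves on $W^{\pm}$ across shrinking scales and forcing one of $\gamma^{\pm},\gamma'^{\pm}$ to be semi-static. The multi-scale part of your argument is essentially sound and parallels the paper's own shrinking-radius step (the passage from $\delta$ to $\delta'$ in Case 1 of the paper's proof): even though Lemma \ref{lemma3.1} only gives a slow eigencoordinate $o(\rho_k)$ at radius $\rho_k$ (not $O(\rho_k^2)$, and the product of the two slow components tends to $o(\delta_*^2)$ rather than to $0$), the strict inequality $\lambda_1<\lambda_2$ propagates this to convergence at the fixed radius $\delta_*/2$, so your repair works.

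The genuine gap is at the very first step: the existence of the points $p_j$, i.e.\ of $c$-semi-static curves passing arbitrarily close to the fixed point without being asymptotic to it. Your justification --- that the set of points on semi-static curves asymptotic to $z_0$ has empty interior, ``were it otherwise, $\gamma^{\pm}$ or $\gamma'^{\pm}$ would be among them'' --- does not follow. Near $0$ the configuration space is foliated both by projections of orbits of $W^{-}_{loc}$ and by projections of orbits of $W^{+}_{loc}$; a punctured neighbourhood of $0$ could a priori be covered entirely by semi-static leaves, with the covering split between forward-asymptotic leaves (on $W^{+}$) and backward-asymptotic leaves (on $W^{-}$), the two closed families meeting along semi-static homoclinic leaves. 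In that scenario neither family is the whole leaf space, so none of the four distinguished orbits need be semi-static, and your $p_j$ do not exist. Excluding this is exactly the content of the paper's Lemma \ref{lem3.3}: the sets $I^{+}_{c,\delta}$ and $I^{-}_{c,\delta}$ are separated by a distance $\geq\mu\delta$, proved by a non-trivial action comparison (a short chord $\zeta$ beats the long excursion near $0$, using the quadratic lower bounds on the generating functions $U^{\pm}$ of the stable and unstable manifolds), together with the uniqueness Lemma \ref{lem3.2}, which is where the irrationality of $\rho(\mu_c)$ actually enters. Your proof uses irrationality only to isolate $\mathrm{supp}\,\mu_c$ from the origin, which is already a sign that a hypothesis-carrying step is missing. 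With Lemma \ref{lem3.3} (or an equivalent surgery estimate) inserted to produce the $p_j$, the rest of your argument goes through.
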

\begin{proof}
By the definition, for each class $c\in\partial^* \mathbb{F}_0$, except for a minimal measure $\mu$ supported on the hyperbolic fixed point $(x,\dot x)=0$, some minimal measure $\mu_c$ exists with non-zero rotation vector. For typical potential $V$, there exist at most three ergodic minimal measures for each first cohomology class (see \cite{BC}). For such potential, in the universal covering space $\bar\pi$: $\mathbb{R}^2\to\mathbb{T}^2$, there exists a strip $\mathcal{S}\ni\{x=0\}$, bounded by two $c$-static curves $\xi_c$ and $\xi'_c$ in the sense that $\bar\pi\xi_c, \bar\pi\xi'_c\subset\mathcal{M}(c)$, such that $\mathrm{int}\bar\pi\mathcal{S}_c\cap\mathcal{M}(c)=\{0\}$. If the  Ma\~n\'e set coveres the whole torus $\mathbb{T}^2$, this strip is filled with $c$-semi-static curves $\xi_{\lambda}$ in the sense that $\bar\pi\xi_{\lambda}$ is $c$-semi static, i.e. passing through every point in the strip there is at least one $c$-semi static curve.

\begin{lem}\label{lem3.2}
If the rotation vector of $\mu_c$ is irrational, passing through each point in the strip $\mathcal{S}_c$ there is only one semi-static curve.
\end{lem}
\begin{proof}
As the configuration space is two-torus, two orbits $\bar\pi(\xi_c,\dot\xi_c)$ and $\bar\pi(\xi'_c,\dot\xi'_c)$ share the same rotation vector, where we extend the standard projection $\bar\pi$: $\mathbb{R}^2\to\mathbb{T}^2$ in a natural way to $\bar\pi$: $T\mathbb{R}^2\to T\mathbb{T}^2$, it keeps the velocity unchanged. Properly choosing a section $\Sigma_c$ of $\mathbb{T}^2$ which is a circle, the closure of $\bar\pi\xi_c\cap\Sigma_c$ and of $\bar\pi\xi'_c\cap\Sigma_c$ are Denjoy set. So, two curves $\bar\pi\xi_c$ and $\bar\pi\xi'_c$ are in the same Aubry class, i.e. $h^{\infty}_c(x,x')+h^{\infty}_c(x',x)=0$ holds for any $x\in\bar\pi\xi_c,x'\in\bar\pi\xi'_c$.

If there were two semi-static curves $\gamma_c,\gamma'_c$ intersecting each other at $\gamma_c(0)=\gamma'_c(0)$, the $\omega$-limit set of the orbit $(\gamma_c,\dot\gamma_c)$ must be different from the $\omega$-limit set of the orbit $(\gamma'_c,\dot\gamma'_c)$. It is a consequence of the Lipschitz roperty of Aubry set. Let us assume $\gamma_c(t)\to 0$ and $\gamma'_c(t)\to\bar\pi\xi_c$ as $t\to\infty$. There are four possibilities for $t\to-\infty$
\begin{enumerate}
   \item $\gamma_c(t)\to\{0\}$ and $\gamma'_c(t)\to\bar\pi\xi_c$ as $t\to-\infty$;
   \item $\gamma_c(t)\to\{0\}$ and $\gamma'_c(t)\to\{0\}$ as $t\to-\infty$;
   \item $\gamma_c(t)\to\bar\pi\xi_c$ and $\gamma'_c(t)\to\{0\}$ as $t\to-\infty$;
   \item $\gamma_c(t)\to\bar\pi\xi_c$ and $\gamma'_c(t)\to\bar\pi\xi_c$ as $t\to-\infty$.
\end{enumerate}
In Case 3, we join $\gamma_c(-\epsilon)$ and $\gamma'_c(\epsilon)$ by a minimal curve $\eta$, join $\gamma'_c(-\epsilon)$ and $\gamma_c(\epsilon)$ by a minimal curve $\eta'$. The action along $\eta$ plus it along $\eta'$ is smaller than the action along $\gamma_c|_{[-\epsilon,\epsilon]}$ plus it along $\gamma'_c|_{[-\epsilon,\epsilon]}$. The action along the singular circle $\gamma_c|_{[\epsilon,\infty)}\ast\gamma'_c|_{(-\infty,-\epsilon]}\ast\eta'$ is non-negative. Therefore, the action along the curve $\gamma_c|_{[-K',-\epsilon]}\ast\eta\ast\gamma'_c|_{[\epsilon,K]}$ is smaller than the action along the curve $\gamma_c|_{[-K',0]}\ast\gamma'_c|_{[0,K]}$ for any large $K,K'>0$. It implies that $h^{\infty}_c(x,0)+h^{\infty}_c(0,x')>h^{\infty}(x,x')$ holds for any $x,x'\in\bar\pi\xi_c$. It contradicts the assumption that the Ma\~n\'e set covers the whole torus: for any $m\in\mathbb{T}^2$, $x,x'\in\mathcal{M}(c)$ one has $h^{\infty}_c(x,m)+h^{\infty}_c(m,x')=h^{\infty}(x,x')$.

For Case $4$, there is a semi-static curve $\gamma''_c$ which approaches $\{0\}$ as $t\to-\infty$ and approaches $\xi_c$ as $t\to\infty$. We join $\gamma'_c(-\epsilon)$ to $\gamma_c(\epsilon)$ by a minimal curve $\eta'$, join $\gamma_c(-\epsilon)$ to $\gamma'_c(\epsilon)$ by a minimal curve $\eta$. Given $x,x'\in\bar\pi\xi_c$, $\exists$ sequence $K_i, K'_i\to\infty$ such that $\gamma'_c(-K'_i)\to x$, $\gamma''_c(K_i)\to x'$. Also $\exists$ sequences $N_i,N'_i\to\infty$ such that $\gamma_c(-N'_i)\to x$, $\gamma'_c(N_i)\to x'$. As the action along $\gamma_c|{[-N'_i,-\epsilon]}\ast\eta\ast\gamma'_c|_{[\epsilon,N_i]}$ plus the quantity $h^{\infty}_c(x',x)$ is non-negative, the action along $\gamma'_c|_{[-K',-\epsilon]}\ast\eta'\ast\gamma_c|_{[\epsilon,\infty)}\ast\gamma''_c|_{(-\infty,K]}$ is smaller than the action along $\gamma'_c|_{[-N'_i,\infty)]}\ast\gamma''_c|_{(-\infty,K]}$. Again, it implies that $h^{\infty}_c(x,0)+h^{\infty}_c(0,x')>h^{\infty}(x,x')$ holds for $x,x'\in\bar\pi\xi_c$. It is absurd. Other cases can also be proved by similar argument.
\end{proof}

Next, let us study what will happen if the strip $\mathcal{S}_c$ is filled with semi-static curves when the rotation vector of $\mu_c$ is irrational.

As the curves $\xi_c$ as well as $\xi'_c$ is disjoint with the origin, some number $\delta>0$ exists so that these two curves do not hit the ball $B_{\delta}(0)$. Let $I^{\pm}_{c,\delta}\subset\partial B_{\delta}(0)$ be such a set that passing through each point $x\in I^{\pm}_{c,\delta}$ a $c$-semi static curve approaches to the origin, as $t\to\pm\infty$. Obviously, the set $I^{\pm}_{c,\delta}\ne\varnothing$ is closed and $I^{-}_{c,\delta}\cap I^{+}_{c,\delta}=\varnothing$ (see the proof of Lemma \ref{lem3.2}). Indeed, one has even stringer property as follows:

\begin{lem}\label{lem3.3}
Assume the Ma\~n\'e set covers a neighborhood of the hyperbolic fixed point. Then, some number $\mu>0$ exists so that for suitably small $\delta>0$, the distance between $I^{+}_{c,\delta}$ and $I^{-}_{c,\delta}$ is not smaller than $\mu\delta$.
\end{lem}
\begin{proof}
Let $x^-_{c,\delta}\in I^-_{c,\delta}$ and $x^+_{\\c,\delta}\in I^+_{c,\delta}$ be the endpoint of $I^-_{c,\delta}$ and $I^+_{c,\delta}$ respectively so that $d(x^-_{c,\delta},x^+_{c,\delta})=\mu\delta$. Passing through the point $x^{\pm}_{c,\delta}$ there is a $c$-semi static curve $\gamma^{\pm}_c$ such that $\gamma^{\pm}_c(t)\to 0$ as $t\to\pm\infty$. One then has a wedge-shaped region in $B_{\delta}(0)$, bounded by $\gamma^-_c$ and $\gamma^+_c$ and denoted by $\mathbb{W}$, see the left of Figure \ref{fig8}. No semi-static curve passes through $\mathrm{int}\mathbb{W}$ to approach the origin. Since the fixed point is hyperbolic, it has its stable and unstable manifold, determined by the generating functions $U^+$ and $U^-$ respectively, namely, the stable (unstable) manifold is the graph of the differential of $U^+$ ($U^-$). Restricted in $\mathbb{W}$, these functions satisfy the condition
$$
U^-(x)-U^-(0)\ge\frac{\lambda_1^2}3\|x\|^2, \qquad U^+(0)-U^+(x)\ge \frac{\lambda_1^2}3\|x\|^2,\ \ \ \forall\ \|x\|\le\delta.
$$
Let $x^+_{c,i,\delta}\in\partial B_{\delta}(0)$ be a sequence of points which is located between $x^+_{c,\delta}$ and $x^-_{c,\delta}$ such that $x^+_{c,i,\delta}\to x^+_{c,\delta}$ as $i\to\infty$. By the assumption, passing through $x^+_{c,i,\delta}$ there is a semi static curve $\gamma_{c,i}$ which keeps close to the curve $\gamma^+_{c,\delta}$ before it is going to get close to the origin. After that, it keeps close to the curve $\gamma^-_{c,\delta}$ and intersect the circle $\partial B_{\delta}(0)$ at a point $x^-_{c,i,\delta}$, see the left of Figure \ref{fig8}.
Clearly, $x^-_{c,i,\delta}\to x^-_{c,\delta}$ as $i\to\infty$. By time translation, we assume $\gamma_{c,i}(0)=x^+_{c,i,\delta}$, $\gamma_{c,i}(T_i)=x^-_{c,i,\delta}$.
\begin{figure}[htp]
  \centering
  \includegraphics[width=9.0cm,height=2.5cm]{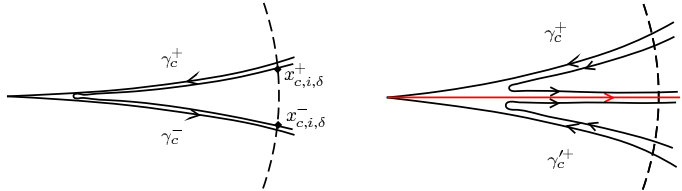}
  \caption{The red line is a semi-static curve departing from the origin.}
  \label{fig8}
\end{figure}
If we set $U^-(0)=U^+(0)$, the action along the curve $\gamma_{c,i}$
$$
A[\gamma_{c,i}|_{[0,T_i]}]\to U^-(x^-_{c,\delta})-U^+(x^+_{c,\delta})\ge\frac{2\lambda_1^2}3\delta^2,\qquad \mathrm{as}\ i\to\infty.
$$
On the other hand, we join the point $x^-_{c,i,\delta}$ to  $x^+_{c,i,\delta}$ by a straight line $\zeta$: $[0,\tau]\to M$, then $|\dot\zeta|\le\frac{\mu\delta}{\tau}$. Since $L=\frac 12\langle A^{-1}\dot x,\dot x\rangle+V(x)$ there exists some constant $a_1>0$ which is independent of $\delta$ such that $L(\zeta,\dot\zeta)\le a_1((\frac{\mu\delta}{\tau})^2+\delta^2)$. Let $\tau=\mu$, then one has
$$
A[\zeta|_{[0,\mu]}]\le 2a_1\mu\delta^2.
$$
If $\mu<\frac{\lambda_1^2}{3a_1}$, one would have $A[\zeta|_{[0,\mu]}]<A[\gamma_{c,i}|_{[0,T_i]}]$ for sufficiently large $i$. It is absurd since the curve $\gamma_{c,i}$ is assumed to be semi-static.
\end{proof}

\begin{lem}\label{lem3.4}
Assume the Ma\~n\'e set covers a neighborhood of the hyperbolic fixed point. Let $\gamma^+_c$ and $\gamma^-_c$ be $c$-semi static curve passing through $I^+_{c,\delta}$ and $I^-_{c,\delta}$ respectively, then
\begin{enumerate}
   \item if they both approach the origin in the direction of $\Lambda_{1,x}$ as $t\to\pm\infty$, then they approach it in opposite direction, i.e.
       $$
       \lim_{t\to\infty}\frac{\dot\gamma^+_c(t)}{\|\dot\gamma^+_c(t)\|}=\lim_{t\to-\infty}\frac{\dot\gamma^-_c(t)} {\|\dot\gamma^-_c(t)\|};
       $$
   \item either $\gamma^+_c$ or $\gamma^-_c$ approaches the origin in the direction of $\Lambda_{2,x}$.
\end{enumerate}
\end{lem}
\begin{proof}
If both curves $\gamma^+_c$ and $\gamma^-_c$ approach the origin in the direction of $\Lambda_{1,x}$ such that
$$
\lim_{t\to\infty}\frac{\dot\gamma^+_c(t)}{\|\dot\gamma^+_c(t)\|}=-\lim_{t\to-\infty}\frac{\dot\gamma^-_c(t)}{\| \dot\gamma^-_c(t)\|},
$$
there would be a sharp wedge-shaped region in $B_{\delta}(0)$, bounded by $\gamma^+_c$ and $\gamma^-_c$, with a vertex at the origin and denoted $\mathbb{W}$. None of semi static curves passes through $\mathbb{W}$ to approach the origin. The rest of the proof is applying Lemma \ref{lem3.3}.
\end{proof}
\begin{cor}\label{cor3.1}
Assume the Ma\~n\'e set covers a neighborhood of the hyperbolic fixed point. If neither $I^-_{c,\delta}$ nor $I^+_{c,\delta}$ contains point through which the semi-static approaches the origin in the direction of $\Lambda_{2,x}$, then, both sets $I^-_{c,\delta}$ and $I^+_{c,\delta}$ are connected.
\end{cor}
\begin{proof}
Be aware that the Ma\~n\'e set is assumed to cover the whole torus and assume that both $I^-_{c,\delta}$ and $I^+_{c,\delta}$ do not contain a point through which the semi-static approaches the origin in the direction of $\Lambda_{2,x}$. In this case, all semi-static curves passing through $I^+_{c,\delta}$ approaches the origin in the same direction. Otherwise, there would be semi-static curves $\gamma^{+}_c,\gamma^{-}_c$ passing through $I^{+}_{c,\delta},I^{+}_{c,\delta}$ respectively such that they approaches the origin in the same direction, but it contradicts Lemma \ref{lem3.4}. Therefore, there would be a sharp wedge-shaped region $\mathbb{W}$ in $B_{\delta}(0)$, bounded by $\gamma^+_c$ and $\gamma'^+_c$ with a vertex at the origin such that through each point in the interior of $\mathbb{W}$, there is a semi-static curve which does not approach the origin as $t\to\infty$. As these semi-static curves do not intersect each other, both $\gamma^+_c$ and $\gamma'^+_c$ approach the origin as $t\to\infty$, there must be a semi static curve which passes through $\mathbb{W}$ and approaches the origin as $t\to-\infty$, see the right of Figure \ref{fig8}. This contradicts Lemma \ref{lem3.4} again.
\end{proof}

In the following, we study the case that both $I^-_{c,\delta}$ and $I^+_{c,\delta}$ are connected. Passing through a point $x^+_{c,i,\delta}\in\partial B_{\delta}(0)\backslash I^+_{c,\delta}$, close to $I^+_{c,\delta}$, there is a unique $c$-semi static curve $\gamma_{c,i}$ which will get close to the origin and eventually depart from the disk $B_{\delta}(0)$. Let $x^+_{c,i,\delta},x'^+_{c,i,\delta}\in\partial B_{\delta}(0)$ be two sequence of points such that they approach $I^+_{c,\delta}$ from different sides, let $x^+_{c,\delta},x'^+_{c,\delta}$ be the endpoint of $I^+_{c,\delta}$ such that $x^+_{c,i,\delta}\to x^+_{c,\delta}$ and $x'^+_{c,i,\delta}\to x'^+_{c,\delta}$. $I^+_{c,\delta}$ is a point if $x^+_{c,\delta}=x'^+_{c,\delta}$. Let $\gamma_{c,i}$ and $\gamma'_{c,i}$ be the semi static curves passing through the points $x^+_{c,i,\delta}$ and $x'^+_{c,i,\delta}$ respectively, they shall intersect the circle $\partial B_{\delta}(0)$ at points $x^-_{c,i,\delta}$ and $x'^-_{c,i,\delta}$ when they are going to leave the disk. Some points $x^-_{c,\delta},x'^-_{c,\delta}\in\partial B_{\delta}(0)$ exist such that $x^-_{c,i,\delta}\to x^-_{c,\delta}$ and $x'^-_{c,i,\delta}\to x'^-_{c,\delta}$ as $i\to\infty$.

If $x^-_{c,\delta}\ne x'^-_{c,\delta}$, they divide the circle $\partial B_{\delta}(0)$ into two arcs. The arc which does not contain $I^+_{c,\delta}$ is nothing else but $I^-_{c,\delta}$. Indeed, by time translation, certain $T^{\pm}_i>0$ exist such that $\gamma_{c,i}(-T^+_i)=x^+_{c,i,\delta}$, $\gamma_{c,i}(T^-_i)=x^-_{c,i,\delta}$ and $d(\gamma_{c,i}(0),0)=\min_td(\gamma_{c,i}(t),0)$. As $x^-_{c,i,\delta}\to x^-_{c,\delta}$, one has $T^-_i\to\infty$. It follows that both $x^-_{c,\delta}$ and $x'_{c,\delta}$ are in $I^-_{c,\delta}$. Since it is connected, that arc is exactly the set $I^-_{c,\delta}$ itself. If $x^-_{c,\delta}=x'_{c,\delta}$, $I^-_{c,\delta}$ is just a point.

Recall four points $(x^+_{\delta},x'^+_{\delta},x^-_{\delta},x'^-_{\delta})$ defined before. There are four orbits $(\gamma^{\pm},\dot\gamma^{\pm})$,  $(\gamma'^{\pm},\dot\gamma'^{\pm})$ such that the formulae (\ref{direction2}) (\ref{direction2'}) hold and $\gamma^{\pm},\gamma'^{\pm}$ intersect the circle $\partial B_{\delta}(0)$ at these four points. Let us consider the location of $x^+_{c,\delta}$ and $x'^+_{c,\delta}$ with respect to the points $x^+_{\delta}$ and $x'^+_{\delta}$. All possibilities are listed below:
\begin{enumerate}
   \item the two-point set $\{x^+_{c,\delta},x'^+_{c,\delta}\}$ does not have overlap with the set $\{x^+_{\delta},x'^+_{\delta}\}$;
   \item the set $\{x^+_{c,\delta},x'^+_{c,\delta}\}$ overlaps the set $\{x^+_{\delta},x'^+_{\delta}\}$.
\end{enumerate}
The following property is guaranteed by the condition $\lambda_1<\lambda_2$: by shrinking the size of $\delta$, the direction of $x^+_{c,\delta}$ ($x'^+_{c,\delta}$) can be set not close to the direction of $\Lambda_{2,x}$ if it is neither the point $x^+_{\delta}$  nor the point $x'^+_{\delta}$ itself.

For Case 1, because of Lemma \ref{lem3.4}, the semi static curves passing through $x^+_{c,\delta}$ and passing through $x'^+_{c,\delta}$ approach the origin in the same direction. By applying Lemma \ref{lemma3.1}, one sees that, when $x^+_{c,i,\delta}$ approaches $x^+_{c,\delta}$ from one side of $I^+_{c,\delta}$, the semi-static curve $\gamma_{c,i}$ will depart from the disk $B_{\delta}(0)$ in a direction close to $\Lambda_{2,x}$; the semi-static curve $\gamma'_{c,i}$ will depart from the disk $B_{\delta}(0)$ in a direction close to $-\Lambda_{2,x}$ when $x'^+_{c,i,\delta}$ approaches $x'^+_{c,\delta}$ from other side of $I^+_{c,\delta}$. So, the direction of $x^-_{c,i,\delta}$ is nearly opposite to the direction of $x'^-_{c,i,\delta}$ if $i\to\infty$. In this case, we claim that $\{x^-_{c,\delta},x'^-_{c,\delta}\}=\{x^-_{\delta},x'^-_{\delta}\}$. To see it, let us shrink the radius $\delta$ to a smaller $\delta'$. The curve $\gamma_{c,i}$ will intersect the circle $\partial B_{\delta'}(0)$ at a point $x^-_{c,i,\delta'}$ when it is going to leave the disk $B_{\delta'}(0)$. If $x^-_{c,\delta}\notin\{x^-_{\delta},x'^-_{\delta}\}$, the direction of $x^-_{c,i,\delta'}$would not be close to the direction of $\pm\Lambda_{2,x}$ provided $\delta'$ is sufficiently small and $i$ is sufficiently large. Along the curve we retreat from the point $x^-_{c,i,\delta'}$ to the point $x^+_{c,i,\delta'}$ where the curve enters the disk, it is guaranteed by Lemma \ref{lemma3.1} that the direction of $x^+_{c,i,\delta'}$, and consequently, the direction of $x^+_{c,\delta}$ is close to that of $\pm\Lambda_{2,x}$. It is obviously absurd. For the sequence $\{x'^-_{c,i,\delta}\}$ one has the same conclusion.
\begin{figure}[htp]
  \centering
  \includegraphics[width=9.5cm,height=4.2cm]{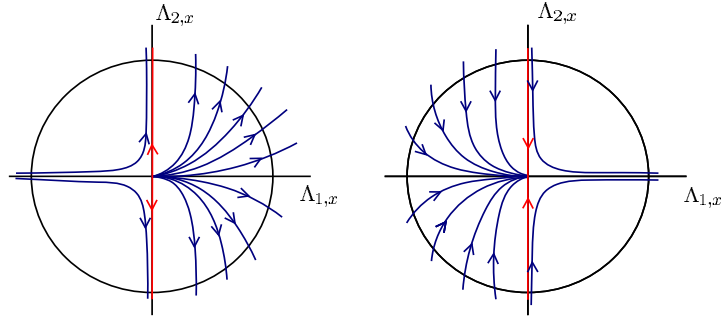}
  \caption{Two red lines in the left figure are the semi-static curves departing from the origin along the direction of $\Lambda_{2,x}$, the red lines in the right figure are the semi-static curves approaching the origin along the direction of $\Lambda_{2,x}$. }
  \label{fig9}
\end{figure}
Therefore, the set $I^-_{c,\delta}$ occupies an arc with length close to $\pi\delta$. Because the fixed point is hyperbolic, semi static curves crossing the arc $I^-_{c,\delta}$ produces orbits staying on the unstable manifold. Therefore, some number $d>0$ exists, no matter how small the number $\delta>0$ could be, such that the circle $\partial B_d(0)$ conatins an arc $I^-_{c,d}$ with length close to $\pi d$. Through each point of $I^-_{c,d}$ there is a semi static curve which approaches the prigin as the time approaches minus infinity, see the left figure in Figure \ref{fig9}. The right figure corresponds to case that the two-point set $\{x^-_{c,\delta},x'^-_{c,\delta}\}$ does not have overlap with the set $\{x^-_{\delta},x'^-_{\delta}\}$. In both cases, there are $c$-semi static curves which approaches the origin in the direction of $\Lambda_{2,x}$ as $t\to -\infty$.

If the set $\{x^+_{c,\delta},x'^+_{c,\delta}\}$ overlaps the set $\{x^+_{\delta},x'^+_{\delta}\}$, there exists at least one $c$-semi static curve which approaches the origin in the direction of $\Lambda_{2,x}$ as $t\to\infty$. With Corollary \ref{cor3.1}, it completes the proof of Theorem \ref{th3.1}.
\end{proof}

Since there are at most four orbits $(x_i(t),y_i(t))$ which approach the origin such that $x_i(t)$ approaches the origin in the direction of $\Lambda_{2,x}$ as the time $t\to\pm\infty$. Each of these orbits connects the origin to at most one Mather set. These Mather sets correspond to at most four edges (vertices) of $\mathbb{F}_0$.

\subsection{The Ma\~n\'e set for $\partial^*\mathbb{F}_0$: rational case and for $c\in\partial\mathbb{F}_0\backslash\partial^* \mathbb{F}_0$}
Let us consider $c\in\partial^*\mathbb{F}_0$ to see what will happen when the rotation vector of $\mu_c$ is rational and the strip $\mathcal{S}$ is filled with semi-static curves. In this case, both curves $\bar\pi\xi_c$ and $\bar\pi\xi'_c$ are closed circle which is associated with a homological class $g\in H_1(\mathbb{T}^2,\mathbb{Z})$. So we use $\xi_g,\xi'_g$ to denote the curves, i.e. $\xi_g=\xi_c$, $\xi'_g=\xi'_c$. In typical case, $\bar\pi\xi_g=\bar\pi\xi'_g$ and there exists an edge $\mathbb{E}_g\subset\partial\mathbb{F}_0$ such that, for each $c\in\mathbb{E}_g$, the Mather set $\mathcal{M}(c)$ is made up by the fixed point and the curve $\bar\pi\xi_g$, see Figure \ref{Fig1}. For any $c,c'\in\mathbb{E}_g$, one has
$$
\langle c'-c,g\rangle=0.
$$
The size of $\mathbb{E}_g$ is determined by the difference of actions along two minimal homoclinic orbits to the circle with opposite direction. Indeed, let $g^{\perp}\in H_1(\mathbb{T}^2,\mathbb{Z})/\mathrm{span}\{g\}$ be a irreducible class (generator), we consider the quantities
$$
A(g^{\perp})=\lim_{T\to\infty}\min_{\stackrel {\bar\pi\xi(-T)=\bar\pi\xi(T)}{\scriptscriptstyle [\xi]=g^{\perp}}} \int_{-T}^TL(\xi(t),\dot\xi(t))dt-\langle c,g\rangle
$$
where $[\xi]$ denotes the homological class of $\bar\pi\xi$ in the relative homological group and
$$
A(-g^{\perp})=\lim_{T\to\infty}\min_{\stackrel {\bar\pi\xi(-T)=\bar\pi\xi(T)}{\scriptscriptstyle [\xi]=-g^{\perp}}} \int_{-T}^TL(\xi(t),\dot\xi(t))dt-\langle c,g\rangle.
$$
Clearly, one has $A(g^{\perp})+A(-g^{\perp})\ge 0$ and it is generic that $A(g^{\perp})+A(-g^{\perp})> 0$ holds simultaneously for all $g\in H_1(\mathbb{T}^2,\mathbb{Z})$ because there are countably many homological classes only. In this case the size of $\mathbb{E}_g$ is given by
$$
|\mathbb{E}_g|=\max_{c,c'\in\mathbb{E}_g}|c-c'|=\frac 1{2\pi}(A(g^{\perp})+A(-g^{\perp})).
$$
\begin{pro}\label{pro3.2}
If $|\mathbb{E}_g|>0$, then for any $c\in\mathrm{int}\mathbb{E}_g$ the Ma\~n\'e set does not cover the whole configuration space $\mathbb{T}^2$.
\end{pro}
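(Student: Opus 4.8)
The plan is to argue by contradiction: assume that for some $c\in\mathrm{int}\,\mathbb E_g$ the Ma\~n\'e set $\mathcal N(c)$ equals $\mathbb T^2$, and contradict $|\mathbb E_g|>0$ by a dimension count in a small configuration-space ball $B_\delta(0)$ around the hyperbolic fixed point $0$.

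The first step is to identify the Aubry set. Since $\mathcal M(c)=\{0\}\cup\bar\pi\xi_g$ for $c\in\mathrm{int}\,\mathbb E_g$, every orbit in $\tilde{\mathcal A}(c)$ is $0$, the periodic circle $\bar\pi\xi_g$, a homoclinic to one of these, or a heteroclinic joining them. This is where the hypothesis enters. The segment $\mathbb E_g$ is precisely the locus on which $A(g^\perp)\ge0$ and $A(-g^\perp)\ge0$, and, since these two affine functions of $c$ sum to the constant $2\pi|\mathbb E_g|$, it runs from the endpoint where $A(g^\perp)=0$ to the endpoint where $A(-g^\perp)=0$; hence for an \emph{interior} class $c$ we have $A(g^\perp)>0$ and $A(-g^\perp)>0$. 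Consequently no homoclinic to $\bar\pi\xi_g$ carries zero $c$-action, so none lies in $\tilde{\mathcal A}(c)$; combined with (H1), (H2) and typicality of $V$ --- so that the minimal homoclinic to $0$ of class $g$ is strictly costlier than $\bar\pi\xi_g$ and $0$, $\bar\pi\xi_g$ belong to distinct static classes --- this gives $\tilde{\mathcal A}(c)=\{0\}\cup\bar\pi\xi_g$.

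The second step locates the rest of the Ma\~n\'e set. Each orbit of $\mathcal N(c)\setminus\tilde{\mathcal A}(c)$ is semi-static, hence calibrated and bi-asymptotic to $\tilde{\mathcal A}(c)$. One whose two limit sets are both $0$, or both $\bar\pi\xi_g$, is a zero $c$-action homoclinic and would lie in $\tilde{\mathcal A}(c)$, which the first step forbids; so it is a heteroclinic between $0$ and $\bar\pi\xi_g$, lying in $\bigl(W^-(0)\cap W^+(\xi_g)\bigr)\cup\bigl(W^+(0)\cap W^-(\xi_g)\bigr)$ inside the critical energy level of $G$, which is three-dimensional and smooth away from $0$. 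By (H1) the manifolds $W^\pm(0)$ are two-dimensional, while $W^\pm(\xi_g)$ are at most two-dimensional; by the generic transversality these intersections have dimension at most one, i.e.\ there is no two-dimensional sheet of $0$--$\bar\pi\xi_g$ connecting orbits. Because $W^+(0)$ projects diffeomorphically onto $B_\delta(0)$, the semi-static curves entering $B_\delta(0)$ that are forward (resp.\ backward) asymptotic to $0$ project into the one-dimensional trace of $W^+(0)\cap W^-(\xi_g)$ (resp.\ $W^-(0)\cap W^+(\xi_g)$); and any semi-static curve merely crossing $B_\delta(0)$ without being asymptotic to $0$ on either side has both limit sets equal to $\bar\pi\xi_g$, hence is a homoclinic to $\bar\pi\xi_g$, excluded in the first step. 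Thus $\mathcal N(c)\cap B_\delta(0)$ lies in a nowhere dense, at most one-dimensional subset of $B_\delta(0)$, so $\mathcal N(c)$ contains no neighbourhood of $0$ and $\mathcal N(c)\ne\mathbb T^2$.

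The step I expect to be the main obstacle is the transversality used in the count, because the critical level of $G$ is \emph{singular} (a pinched torus) exactly at $0$, so a Kupka--Smale theorem cannot be quoted directly. I would argue in the smooth part of the level, use (H2) near the homoclinics and the inclination lemma near $0$ and near $\bar\pi\xi_g$, and observe that a two-dimensional family of connecting orbits would force $W^+(0)$ to coincide with $W^-(\xi_g)$ on an open set --- a genuine separatrix sheet, excluded for typical $V$; alternatively, such a sheet would entail $A(g^\perp)+A(-g^\perp)=0$, directly contradicting $|\mathbb E_g|>0$. A minor additional point is that the identification of $\tilde{\mathcal A}(c)$ must hold for \emph{every} interior $c$; but an exceptional $c$ whose Aubry set carried an extra homoclinic orbit would merely add a one-dimensional invariant set, not enlarging $W^\pm(0)\cup W^\pm(\xi_g)$, so the count is unaffected.
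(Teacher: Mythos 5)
Your route is genuinely different from the paper's, and it founders exactly where you predicted it would. The decisive step is the claim that the heteroclinic intersections $W^{\pm}(0)\cap W^{\mp}(\xi_g)$ inside the critical energy level are at most one-dimensional. Nothing in the hypotheses delivers this: ({\bf H2}) only gives transversality of the stable and unstable manifolds of the \emph{fixed point} along its minimal homoclinics, and says nothing about how $W^{\pm}(0)$ meet the invariant manifolds of the periodic orbit $\bar\pi\xi_g$. Since the local stable manifold of $0$ is a Lagrangian graph projecting diffeomorphically onto a neighbourhood of $0$ in $\mathbb{T}^2$, the scenario you must exclude is precisely that an open piece of $W^{+}(0)$ (or $W^{-}(0)$) lies in the unstable (resp.\ stable) manifold of $\xi_g$ and consists of semi-static orbits; in that case the heteroclinics do project onto a neighbourhood of $0$ and the dimension count collapses. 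Your first fallback (``excluded for typical $V$'') changes the statement: the proposition is used as an unconditional consequence of $|\mathbb{E}_g|>0$ under the standing hypotheses, in order to reduce the genericity argument of Theorem \ref{thm3.1} to finitely many exceptional classes, so importing a further residual condition here is not available without redoing that reduction. Your second fallback (``a separatrix sheet entails $A(g^{\perp})+A(-g^{\perp})=0$'') is not an implication: a semi-static heteroclinic from $\xi_g$ to $0$ realizes $h^{\infty}_c(x,0)$, one from $0$ to $\xi_g$ realizes $h^{\infty}_c(0,x')$, and the concatenated loop has action $h^{\infty}_c(x,0)+h^{\infty}_c(0,x')$, which is strictly positive precisely when $0$ and $\bar\pi\xi_g$ lie in distinct static classes --- which is what your first step assumes. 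A two-dimensional family of semi-static heteroclinics is therefore compatible, as far as your argument shows, with $A(g^{\perp})+A(-g^{\perp})>0$.

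The paper sidesteps all of this. It first records (Lemma \ref{lem3.5}, proved like Lemma \ref{lem3.2}) that if $\mathcal{N}(c)=\mathbb{T}^2$ then through each point there is exactly one semi-static curve; a semi-static curve not asymptotic to the fixed point in either time direction then has both limit sets on $\bar\pi\xi_g$, hence is static, and one concludes that $\mathcal{A}(c)=\mathcal{N}(c)=\mathbb{T}^2$. But $|\mathbb{E}_g|>0$ means $\mathrm{int}\,\mathbb{E}_g\neq\varnothing$ and $\partial\mathbb{E}_g\neq\varnothing$, and Lemma \ref{flatlem2} forces $\mathcal{A}(c')\supsetneq\mathcal{A}(c)$ for $c'\in\partial\mathbb{E}_g$, which is impossible once $\mathcal{A}(c)$ is already the whole torus. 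Note that this is where the hypothesis $|\mathbb{E}_g|>0$ actually does its work, whereas in your argument it only enters through the positivity of $A(\pm g^{\perp})$ in the first step. To salvage your approach you would have to supply the missing exclusion of the separatrix sheet; the cleanest repair is to replace the dimension count by the monotonicity of Aubry sets along the edge.
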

\begin{proof}
We will make use of the following lemma which can be proved with the same argument to prove Lemma \ref{lem3.2}.
\begin{lem}\label{lem3.5}
If $c\in\partial^*\mathbb{F}_0$ such that $\rho(\mu_c)$ is rational, $\bar\pi\xi_g=\bar\pi\xi'_g$ and if the configuration space $\mathbb{T}^2$ is covered by Ma\~n\'e set $\mathcal{N}(c)$ for certain $c\in\mathbb{E}_g$, then, passing through each point on the torus there is exactly one $c$-semi static curve.
\end{lem}
By the condition, the Aubry set for $c$ is the same as the Ma\~n\'e set and covers the whole configuration space. Indeed, given a small $\delta>0$, both $I^-_{c,\delta}$ and $I^+_{c,\delta}$ are non-empty and closed. As $I^-_{c,\delta}\cap I^+_{c,\delta}=\varnothing$, the fixed point is on the boundary of the set made up by those semi static curves which do not approach the fixed point. Such curve is obviously static since its $\alpha$-limit set is the same as its $\omega$-limit set. By a result of \cite{Mas}, all cohomology classes in the interior of a flat share the same Aubry set, and $\mathcal{N}(c')\supsetneq\mathcal{A}(c)$ holds for $c\in\mathrm{int}\mathbb{E}_g$ and $c'\in\partial\mathbb{E}_g$. Therefore, it is impossible that $\mathcal{N}(c)$ covers the whole configuration space for $c\in\mathrm{int}\mathbb{E}_g$.
\end{proof}

Because of Lemma \ref{lem3.5}, by completely following the argument to prove Theorem \ref{th3.1} one obtains

\begin{theo}\label{pro3.3}
For Hamiltonian of $($\ref{averagesystem}$)$ we assume typical potential $V$ so that $\bar\pi\xi_g=\bar\pi\xi'_g$ and the hypothesis $(${\bf H1}$)$ holds. If $\exists$ $c\in\mathbb{E}_g$ such that $\mathcal{N}(c)=\mathbb{T}^2$, then there exists $c$-semi static curve which approaches the fixed point in the direction of $\Lambda_{2,x}$ as $t\to\infty$ or $t\to-\infty$.
\end{theo}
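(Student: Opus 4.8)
The plan is to reproduce, essentially verbatim, the chain of arguments used to prove Theorem \ref{th3.1}, the only substantive change being that the uniqueness statement Lemma \ref{lem3.5} replaces Lemma \ref{lem3.2}. First I would record the geometric setup in the rational case. Since $\rho(\mu_c)$ is rational and $\bar\pi\xi_g=\bar\pi\xi'_g$, the curve $\bar\pi\xi_g$ is an embedded closed circle on $\mathbb{T}^2$ in the class $g$; cutting $\mathbb{T}^2$ along it produces a cylinder on which the fixed point sits and $\bar\pi\xi_g$ is a boundary periodic orbit, so that in the universal cover the strip $\mathcal{S}$ used in the proof of Theorem \ref{th3.1} is bounded by two lifts of the single circle $\bar\pi\xi_g$. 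Under the hypothesis $\mathcal{N}(c)=\mathbb{T}^2$, every point of this strip lies on some $c$-semi static curve, and by Lemma \ref{lem3.5} on exactly one. Moreover, as in the proof of Proposition \ref{pro3.2}, any $c$-semi static curve that does not approach the fixed point has coinciding $\alpha$- and $\omega$-limit sets on $\bar\pi\xi_g$, hence is $c$-static, so $\mathcal{A}(c)=\mathcal{N}(c)=\mathbb{T}^2$ and the picture is again a one-parameter family of static curves together with the homoclinic orbits to the fixed point.

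Next I would run the same auxiliary results. Define $I^{\pm}_{c,\delta}\subset\partial B_{\delta}(0)$ exactly as before: the nonempty closed sets of points from which a $c$-semi static curve tends to the origin as $t\to\pm\infty$; disjointness $I^{+}_{c,\delta}\cap I^{-}_{c,\delta}=\varnothing$ follows from Lemma \ref{lem3.5} by the same no-self-intersection argument as for Lemma \ref{lem3.2}. Now Lemmas \ref{lem3.3} and \ref{lem3.4} and Corollary \ref{cor3.1} apply without any change: their hypotheses are only that the fixed point is hyperbolic of type $(${\bf H1}$)$ and that the Ma\~n\'e set covers a neighborhood of it, both of which hold here, and their proofs use neither rationality nor irrationality of $\rho(\mu_c)$. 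Concretely, Lemma \ref{lem3.3} gives $d(I^{+}_{c,\delta},I^{-}_{c,\delta})\geq\mu\delta$ for a fixed $\mu>0$ and all small $\delta$; Lemma \ref{lem3.4} gives that the boundary semi static curves, if both approaching along $\Lambda_{1,x}$, do so in opposite senses; and Corollary \ref{cor3.1} gives that, absent a point in $I^{\pm}_{c,\delta}$ whose semi static curve approaches along $\pm\Lambda_{2,x}$, both $I^{-}_{c,\delta}$ and $I^{+}_{c,\delta}$ are connected arcs.

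Finally I would repeat the last part of the proof of Theorem \ref{th3.1}. If some point of $I^{+}_{c,\delta}\cup I^{-}_{c,\delta}$ already yields a $\pm\Lambda_{2,x}$-approach we are done; otherwise, by Corollary \ref{cor3.1}, $I^{\pm}_{c,\delta}$ are arcs with well-defined endpoints $x^{\pm}_{c,\delta},x'^{\pm}_{c,\delta}$, and one compares their positions with the four points $x^{\pm}_{\delta},x'^{\pm}_{\delta}$ at which the four special orbits $\gamma^{\pm},\gamma'^{\pm}$ of (\ref{direction2})--(\ref{direction2'}) meet $\partial B_{\delta}(0)$. In the non-overlap case one uses Lemma \ref{lemma3.1}, applied on a smaller ball $B_{\delta'}(0)$, to force $\{x^{-}_{c,\delta},x'^{-}_{c,\delta}\}=\{x^{-}_{\delta},x'^{-}_{\delta}\}$, so that $I^{-}_{c,d}$ occupies an arc of length $\approx\pi d$ on a $\delta$-independent circle $\partial B_{d}(0)$, producing a $c$-semi static curve tending to the origin along $\Lambda_{2,x}$ as $t\to-\infty$; in the overlap case one endpoint of $I^{+}_{c,\delta}$ is one of $x^{\pm}_{\delta}$ and the corresponding curve approaches along $\Lambda_{2,x}$ as $t\to+\infty$. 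This exhausts the cases.

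I do not expect a genuine obstacle, since this is the advertised ``completely following the argument'' step; the one point demanding a line of justification is the first paragraph, namely checking that in the rational case the strip and static-curve picture underlying the whole argument is still in force, i.e. that $\mathcal{A}(c)=\mathcal{N}(c)=\mathbb{T}^2$. As noted, this is if anything easier than the Denjoy situation of Theorem \ref{th3.1}, because $\bar\pi\xi_g$ is now an honest periodic orbit rather than a Denjoy minimal set.
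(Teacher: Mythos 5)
Your proposal is correct and is exactly the route the paper takes: the paper itself disposes of this theorem with the single sentence ``Because of Lemma \ref{lem3.5}, by completely following the argument to prove Theorem \ref{th3.1} one obtains...'', and your write-up simply makes explicit the substitution of Lemma \ref{lem3.5} for Lemma \ref{lem3.2} and the verification (via the argument of Proposition \ref{pro3.2}) that Lemmas \ref{lem3.3}, \ref{lem3.4} and Corollary \ref{cor3.1} apply unchanged. No discrepancy with the paper's intended proof.
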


Finally, let us study the case when $c\in\partial\mathbb{F}_0\backslash\partial^*\mathbb{F}_0$ which contains at most countably many vertices. Indeed, if both $\partial\mathbb{F}_0\backslash\partial^*\mathbb{F}_0$ and $\partial^*\mathbb{F}_0$ are non-empty, there do exist countably many vertices (cf. Theorem \ref{flatthm3}).

Let $\mathbb{E}_i\subset\partial\mathbb{F}_0\backslash\partial^*\mathbb{F}_0$ be an edge joined to other two edges at the vertex $c_i$, $c_{i+1}$ respectively. By Theorem \ref{flatthm3}, the Aubry set for $c_j$ consists of two minimal homoclinic curves $\gamma_{j-1}$ and $\gamma_j$. Denote by $g_j\in\mathbb{Z}^2$ the homology class of $\gamma_j$, then the matrix $(g_{j-1},g_j)$ is uni-module. By introducing suitable coordinates on $\mathbb{T}^2$, we can assume $g_i=(1,0)$. In this coordinate system, $g_{i-1}=(k,1)$ and $g_{i+1}=(k',-1)$.
\begin{figure}[htp] 
  \centering
  \includegraphics[width=7.0cm,height=2.5cm]{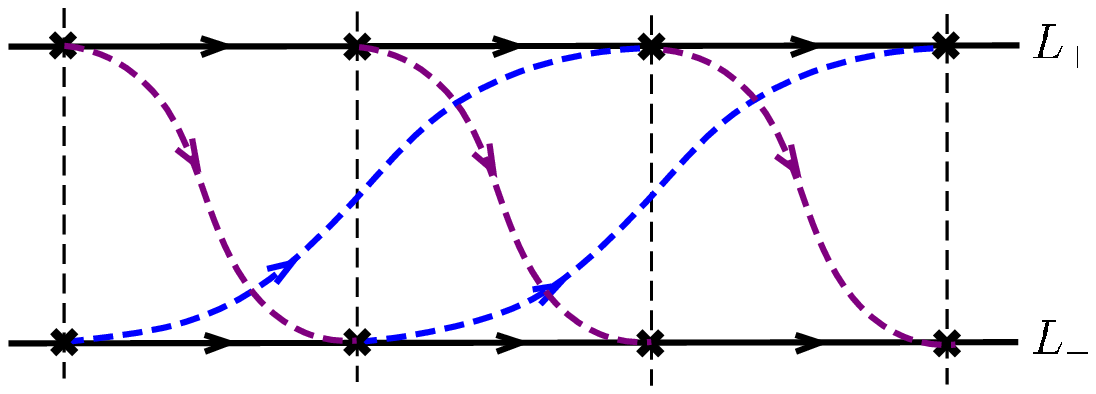}
\end{figure}
In this figure, each unit square represents a fundamental domain of $\mathbb{T}^2$ in the universal covering space, the horizontal line represents the lift of the homoclinic curve $\gamma_i$, which stays in the Aubry set for each $c\in\mathbb{E}_i$. The blue dashed lines represent the lift of the $\gamma_{i-1}$ which stays in the Aubry set for the class at one end-point of $\mathbb{E}_i$. The purple dashed lines represents the lift of the $\gamma_{i+1}$ which stays in the Aubry set for the class at another end-point of $\mathbb{E}_i$.

For any $c\in\mathbb{E}_i$, each $c$-semi static curve is static as the minimal measure is uniquely supported on the fixed point. In other words, the Aubry set is the same as the Ma\~n\'e set. So, if the Ma\~n\'e set for some $c\in\mathbb{E}_i$ covers the whole 2-torus, passing through each point on the torus there exists exactly one $c$-semi static curve. Therefore, in the same way to prove Proposition \ref{pro3.2} and Lamma \ref{lem3.2} we can prove  the following proposition

\begin{theo}\label{pro3.4}
For an edge $\mathbb{E}_i\subset\partial\mathbb{F}_0\backslash \partial^*\mathbb{F}_0$, it holds simultaneously for all $c\in\mathrm{int}\mathbb{E}_i$ that $\mathcal{N}(c)\subsetneq\mathbb{T}^2$. If for certain $c\in\partial\mathbb{E}_i$ such that $\mathcal{N}(c)=\mathbb{T}^2$, then there exists at least one static curve which approaches the origin in the direction of $\Lambda_{2,x}$.
\end{theo}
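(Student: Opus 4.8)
The plan is to follow the scheme already used for Theorem \ref{th3.1} and Proposition \ref{pro3.2}; the only new feature is that on the edge $\mathbb{E}_i$ the Aubry set carries no periodic component, so the ``strip'' bounded by two static curves is replaced by the region cut out in the universal cover by the lifts of the two homoclinic curves $\gamma_{i-1},\gamma_i$.

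First I would dispatch the assertion $\mathcal{N}(c)\subsetneq\mathbb{T}^2$ for $c\in\mathrm{int}\mathbb{E}_i$. Since $\mathbb{E}_i\subset\partial\mathbb{F}_0\backslash\partial^*\mathbb{F}_0$, for every $c\in\mathbb{E}_i$ the unique $c$-minimal measure is the Dirac mass at the hyperbolic fixed point, hence each $c$-semi static curve is both backward and forward asymptotic to $\mathcal{A}(c)$, whose only recurrent orbit is that point; so every $c$-semi static curve is in fact static and $\mathcal{N}(c)=\mathcal{A}(c)$. For $c\in\mathrm{int}\mathbb{E}_i$, by hypothesis {\bf H2} and Theorem \ref{flatthm3} this set is the fixed point together with at most one homoclinic orbit of class $g(\mathbb{E}_i)$, a one-dimensional compact set whose projection cannot fill $\mathbb{T}^2$; equivalently, by \cite{Mas} $\mathcal{A}(c)$ is the same for all $c\in\mathrm{int}\mathbb{E}_i$ while $\mathcal{A}(c')\supsetneq\mathcal{A}(c)$ for $c'\in\partial\mathbb{E}_i$, which is impossible if $\mathcal{A}(c)=\mathbb{T}^2$. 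This bounds all interior classes simultaneously.

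Now assume $c\in\partial\mathbb{E}_i$ with $\mathcal{N}(c)=\mathbb{T}^2$. Again $\mathcal{N}(c)=\mathcal{A}(c)$, so the Ma\~n\'e set is a Lipschitz graph over $\mathbb{T}^2$ and through each point there passes exactly one $c$-semi static (= static) curve; this replaces Lemma \ref{lem3.2}, and is now immediate from the graph property rather than from a Denjoy argument. Fix a small $B_{\delta}(0)$ and let $I^{\pm}_{c,\delta}\subset\partial B_{\delta}(0)$ be the set of points through which the $c$-static curve tends to the origin as $t\to\pm\infty$. Exactly as in Theorem \ref{th3.1}, $I^{\pm}_{c,\delta}$ is nonempty and closed with $I^{-}_{c,\delta}\cap I^{+}_{c,\delta}=\varnothing$, and Lemma \ref{lem3.3} gives $d(I^{-}_{c,\delta},I^{+}_{c,\delta})\ge\mu\delta$ for a fixed $\mu>0$, using only that the Ma\~n\'e set covers a neighborhood of the fixed point together with the hyperbolicity estimates on the generating functions $U^{\pm}$. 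Lemma \ref{lem3.4} then excludes the spreading $\Lambda_{1,x}$ configuration, so either one of the static curves through the endpoints of $I^{\pm}_{c,\delta}$ already approaches the origin along $\pm\Lambda_{2,x}$, and we are done, or, by Corollary \ref{cor3.1}, both $I^{-}_{c,\delta}$ and $I^{+}_{c,\delta}$ are connected arcs.

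In that remaining case I would reproduce the rescaling argument of Theorem \ref{th3.1}: shrinking $\delta$ to a smaller $\delta'$ and invoking Lemma \ref{lemma3.1} together with $\lambda_1<\lambda_2$, a $c$-static curve entering $B_{\delta}(0)$ near an endpoint of $I^{+}_{c,\delta}$ but off the $\Lambda_{1,x}$-cone must leave $B_{\delta'}(0)$ in a direction close to $\pm\Lambda_{2,x}$; comparing the limiting entry and exit points with the four distinguished points on $W^{\pm}$ forces $I^{-}_{c,\delta}$ (or $I^{+}_{c,\delta}$) to occupy an arc of length close to $\pi\delta$, and hyperbolicity propagates this to balls of a fixed radius, producing a $c$-static curve asymptotic to the origin along $\Lambda_{2,x}$ as $t\to-\infty$ (resp. $t\to+\infty$). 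The only real obstacle is the geometric bookkeeping of this last step --- tracking which of the finitely many curves $\gamma^{\pm},\gamma'^{\pm}$ the arcs $I^{\pm}_{c,\delta}$ abut, and checking that the figure-eight region replacing the periodic strip of $\partial^*\mathbb{F}_0$ creates no new configuration --- but since the local dynamics inside $B_{\delta}(0)$ is identical to that already analysed, no new estimate is required.
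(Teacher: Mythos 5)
Your proposal is correct and follows essentially the same route as the paper, which itself only sketches this result by pointing back to Proposition \ref{pro3.2}, Lemma \ref{lem3.2} and the proof of Theorem \ref{th3.1}: the key observation in both is that on $\mathbb{E}_i$ the unique minimal measure sits on the fixed point, so every semi-static curve is static, $\mathcal{N}(c)=\mathcal{A}(c)$, uniqueness of the static curve through each point comes from the Lipschitz graph property, and the machinery of Lemmas \ref{lemma3.1}, \ref{lem3.3}, \ref{lem3.4} and Corollary \ref{cor3.1} then applies unchanged. Your direct dimension count for the interior classes is a harmless variant of the paper's argument via the strict inclusion $\mathcal{A}(c')\supsetneq\mathcal{A}(c)$ at the endpoints.
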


Because of Theorems \ref{th3.1}, \ref{pro3.3} and \ref{pro3.4}, there are at most four cohomological classes in $\partial\mathbb{F}_0$ for which it is possible that the Ma\~n\'e set covers the whole configuration space $\mathbb{T}^2$. One has

\begin{theo}\label{thm3.1}
For Hamiltonian $H=\frac 12\langle Ay,y\rangle-V(x)$, there exists a resudual set $\mathfrak{V}\subset C^r(\mathbb{T}^2,\mathbb{R})$ with $r\ge 2$ such that for each $V\in\mathfrak{V}$ it holds simultaneously for all $c\in\partial\mathbb{F}_0$ that the Ma\~n\'e set does not cover the whole configuration space: $\mathcal{N}(c)\subsetneq\mathbb{T}^2$.
\end{theo}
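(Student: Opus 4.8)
The plan is to feed Theorems~\ref{th3.1}, \ref{pro3.3} and \ref{pro3.4} into a perturbation argument. First I would record the conclusion common to those three results: for \emph{every} $c\in\partial\mathbb{F}_0$, if $\mathcal{N}(c)=\mathbb{T}^2$ then there is a $c$-semi static curve $\gamma$ with $\dot\gamma(t)/\|\dot\gamma(t)\|\to\pm\Lambda_{2,x}$ as $t\to+\infty$ or as $t\to-\infty$; indeed the three cases $c\in\partial^*\mathbb{F}_0$ with $\rho(\mu_c)$ irrational, $c\in\mathbb{E}_g\subset\partial^*\mathbb{F}_0$ with $\rho(\mu_c)$ rational, and $c\in\partial\mathbb{F}_0\backslash\partial^*\mathbb{F}_0$ exhaust $\partial\mathbb{F}_0$. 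By the eigenvalue gap $\lambda_1<\lambda_2$ only the four orbits $\gamma^{\pm},\gamma'^{\pm}$ introduced above can reach the fixed point along $\pm\Lambda_{2,x}$, and each of them limits onto at most one Mather set, which sits on at most one edge or vertex of $\mathbb{F}_0$. Hence a ``bad'' class $c$ (one with $\mathcal{N}(c)=\mathbb{T}^2$) must be one of at most four cohomology classes $c_1,\dots,c_4\in\partial\mathbb{F}_0$, and it suffices to rule out, generically in $V$, that the associated special orbit is $c_j$-semi static.

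Next I would organise this into a countable intersection of open dense sets. Each Mather set onto which one of $\gamma^{\pm},\gamma'^{\pm}$ can limit lies on an edge or a vertex of $\partial\mathbb{F}_0$ carrying an indivisible homology class — or, at a vertex, a unimodular pair of them — $g\in H_1(\mathbb{T}^2,\mathbb{Z})$, and there are only countably many such $g$. For each indivisible $g$ let $\mathfrak{V}_g\subset C^r(\mathbb{T}^2,\mathbb{R})$ be the set of potentials satisfying (H1), (H2) for which, whenever $\partial\mathbb{F}_0$ has an edge or vertex of homology $g$ and one of $\gamma^{\pm},\gamma'^{\pm}$ limits onto the associated Mather set, that orbit is \emph{not} $c$-semi static for the associated class $c$. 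Put $\mathfrak{V}=\bigcap_g\mathfrak{V}_g$. By the dichotomy of the previous paragraph, every $V\in\mathfrak{V}$ satisfies $\mathcal{N}(c)\subsetneq\mathbb{T}^2$ for all $c\in\partial\mathbb{F}_0$, so the theorem reduces to showing that each $\mathfrak{V}_g$ is open and dense.

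Openness should come from the fact that failing to be $c$-semi static is a \emph{strict} action inequality: if some arc of $\gamma^-$ admits a competitor with the same endpoints and strictly smaller $c$-action, this survives $C^2$-small perturbations of $V$, since the hyperbolic fixed point, its splitting, the four special orbits, the function $\alpha_0$, the flat $\mathbb{F}_0$ near the relevant edge, and the $c$-actions all vary continuously with $V$. For density, given $V_0$ I would destroy each special orbit by a $C^r$-small bump: since $\gamma^-$ approaches the origin along $\Lambda_{2,x}\ne\Lambda_{1,x}$, it is not one of the minimal homoclinic curves contained in $\tilde{\mathcal{A}}(c)$ for any $c\in\partial\mathbb{F}_0$, so some sub-arc of $\gamma^-$ stays a definite distance from all the (at most four) Aubry sets in play; adding to $V$ a small bump supported near such a sub-arc and disjoint from those Aubry sets raises the action along $\gamma^-$ while leaving $\tilde{\mathcal{A}}(c)$, $\alpha_0$, $\mathbb{F}_0$ and the relevant barrier values intact, so $\gamma^-$ can no longer minimise; carrying this out on pairwise disjoint supports for the at most four special orbits destroys all of them simultaneously. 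The hard part will be to make precise that one really can perturb ``away from the Aubry set'' — that the bump neither moves $\mathbb{F}_0$ and the Aubry sets nor manufactures a fresh $\Lambda_{2,x}$-semi static curve — and to control cleanly the fact that $\mathbb{F}_0$ itself, and which homology classes are realised on its boundary, vary with $V$; this is where the genericity of (H1), (H2) and of the finiteness of the set of ergodic minimal measures, already invoked above, must be used with care.
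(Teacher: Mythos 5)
Your reduction is exactly the paper's: Theorems \ref{th3.1}, \ref{pro3.3} and \ref{pro3.4} exhaust $\partial\mathbb{F}_0$, so a class with $\mathcal{N}(c)=\mathbb{T}^2$ forces a semi-static curve entering the fixed point along $\pm\Lambda_{2,x}$, and since only the four orbits $\gamma^{\pm},\gamma'^{\pm}$ can do that, each limiting onto at most one Mather set, there are at most four bad classes. Where you diverge is in the perturbation, and that is where your argument has a genuine gap. You place the bump \emph{on} a sub-arc of $\gamma^-$ and conclude that, since its action is raised, ``$\gamma^-$ can no longer minimise.'' That inference does not follow: raising the action of $\gamma^-$ only destroys its semi-static property if some competitor with the same endpoints \emph{avoids} the support of the bump and has strictly smaller perturbed action. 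If the bump is supported in a ball of radius $r$ about $\gamma^-(t_0)$ and has height $\eta$, the gain from avoiding it is only $O(\eta r)$ (the time $\gamma^-$ spends in the ball is $O(r)$), while the cheapest detour around the ball costs some $\kappa(r)\ge 0$ extra action in the unperturbed system; you must show $\kappa(r)=o(\eta r)$ with both $r$ and $\eta$ arbitrarily small, which is exactly the delicate point and is nowhere addressed. On top of this, the perturbed special orbit is no longer the curve you bumped, so you would still have to rule out that the \emph{new} $\Lambda_{2,x}$-orbit is semi-static for the new system.

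The paper's proof avoids all of this by putting the bump in the opposite place: a non-negative $V_{\delta}$ supported in a small disk $\bar U$ near (but not containing) the fixed point, chosen disjoint from the four Mather sets $\mathcal{M}^{\pm},\mathcal{M}'^{\pm}$ \emph{and} from the four special orbits whenever they are semi-static. Because $V_{\delta}\ge 0$ and vanishes on the Mather sets and on the special orbits, the minimal measures, $\alpha$, $\mathbb{F}_0$ and the semi-static status of $\gamma^{\pm},\gamma'^{\pm}$ are literally unchanged (so your worry about $\mathbb{F}_0$ and the boundary homologies moving evaporates), while $h^{\infty}_c(m,m')$ is also unchanged since the old connecting minimizers avoiding $\bar U$ survive. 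For any $x\in U$ with $V_{\delta}(x)>0$, every curve through $x$ picks up a strictly positive extra action (minimizers have bounded speed, hence spend a definite time where $V_{\delta}>0$), so the barrier $B^*_c(x)$ becomes strictly positive and $\mathcal{N}(c)$ misses part of $U$ — no competitor estimate is needed. If you want to keep your route of destroying the special orbit itself, you must supply the quantitative detour-versus-gain estimate; otherwise I would redirect the density step to the paper's placement of the support.
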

\begin{proof}
There are only four orbits $(\gamma^{\pm}(t),\dot\gamma^{\pm}(t)),(\gamma'^{\pm}(t),\dot\gamma'^{\pm}(t))$ which approach the hyperbolic fixed point in the direction of $\pm\Lambda_{2,x}$ as $t\to\infty$ or as $t\to -\infty$, i.e. Formulae (\ref{direction2}) (\ref{direction2'}) hold. Each of these orbits connects the fixed point to at most one Mather set, denoted by $\mathcal{M}^{\pm},\mathcal{M}'^{\pm}$. It may not connect to Mather set if it is not a minimal orbit. By the work of \cite{BC} it is generic property that, for each first cohomology class, the Mather set contains at most three connected components. Because the support of ergodic minimal measure is compact, each Mather sets under consideration contains the fixed point, there exists a small disk $U$ in a neighborhood of the fixed point such that $\bar U$ is disjoint with the set $\mathcal{M}^{\pm}\cup\mathcal{M}'^{\pm}$. The disk $U$ is chosen so that any curve of $\gamma^{\pm}$, $\gamma'^{\pm}$ does not hit $\bar U$ provided it is semi-static. Given $\delta>0$ the duration for a semi static curve staying outside of $\delta$-neighborhood of Mather set is finite.

We construct potential perturbation $V\to V+V_{\delta}$ such that non-negative $V_{\delta}$ is small in $C^r$-topology and $\mathrm{supp}V_{\delta}\subseteq\bar U$. Since $V_{\delta}$ is non-negative, a curve of of $\gamma^{\pm}$, $\gamma'^{\pm}$ is still semi static for certain $c$ if it is already semi-static before the perturbation is added. So, if a Ma\~n\'e set $\mathcal{N}(c)$ covers the $2$-torus, under the small perturbation constrcuted above, the set $U$ does not overlap with the Ma\~n\'e set. It is possible that another curve of $\gamma^{\pm}$, $\gamma'^{\pm}$ become semi static after the small perturbation is added. In this case, we can construct small perturbation further so that the support of potential perturbation does not touch semi static curves related to these Mather sets. As there are at most four Mather sets are concerned about, we can do it one by one. This completes the proof.
\end{proof}

\subsection{Annulus of cohomology equivalence around the disk $\mathbb{F}_0$}
To establish the new version of cohomology equivalence around the flat $\mathbb{F}_0$, we make use of the upper semi-continuity of Ma\~n\'e set with respect to the first cohomology class. According to Theorem \ref{thm3.1}, for typical potential $V$ in the Hamiltonian $G$ of (\ref{averagesystem}), it holds simultaneously for all $c\in\partial\mathbb{F}_0$ that the Ma\~n\'e set does not cover the configuration space: $\mathcal{N}(c)\subsetneq\mathbb{T}^2$.  As $\partial\mathbb{F}_0$ is compact, certain $\Delta_0>0$ exists so that for all $c\in\alpha_G^{-1}(\Delta)$ with $\Delta\le 2\Delta_0$, the Ma\~n\'e set does not cover the 2-torus too.

Since the Lagrangian is defined on $2$-torus,  for each average action $\Delta>\min\alpha_{G}$, the dynamics on the energy level $G^{-1}(\Delta)$ is similar to twist and area-preserving map. First of all, the rotation vector of each minimal measure is not zero. Thus, any minimal measure is not supported on fixed points. Next, for any class $c\in\alpha_{G}^{-1}(\Delta)$, all $c$-minimal measures share the same rotation rotation direction. Otherwise, the Lipschitz graph property of Mather set will be violated. Each ergodic minimal measure is supported on a periodic orbit if the rotation direction is rational.

From these properties one derives the following: for each $c\in\alpha^{-1}_{G}(\Delta)$, there exists a circle $S_c\subset\mathbb{T}^2$ such that each semi-static curve passes through it transversally and $\mathcal{N}(c)\cap S_c\subsetneq S_c$. Since Ma\~n\'e set is closed, there exist finitely many intervals $I_{c,i}\subset S_c$ disjoint to each other such that $\mathcal{N}(c)\cap S_c\subset\cup I_{c,i}$, see the left figure in Figure \ref{fig10}. In the extended configuration space $\mathbb{T}^3$, we choose a section $\Sigma_c=S_c\times\mathbb{T}$. As $G$ is independent of $\tau$, the Ma\~n\'e set in the extended space, denoted by $\mathcal{N}_G(c)$, stays in the strips: $\mathcal{N}_G(c)\cap\Sigma_c\subset (\cup I_{c,i})\times\mathbb{T}$, see the figure in middle of Figure \ref{fig10}.

Recall the Hamiltonian of (\ref{averagesystem}) is a truncation of the Hamiltonian of (\ref{mainsystems}) rewritten in the following
\begin{equation*}
G_{\epsilon}=\frac 12\langle Ay,y\rangle -V(x)+\sqrt{\epsilon}R_{\epsilon}(x,y,\tau),\qquad (x,y)\in\mathbb{T}^2\times\mathbb{R}^2
\end{equation*}
where $\tau=\frac{\sqrt{\epsilon}}{\omega_3} x_3$. The function $G_{\epsilon}$ solves the equation $\tilde G_{\epsilon}(x,\frac{\omega_3}{\sqrt{\epsilon}}\tau,y,-\frac{\sqrt{\epsilon}}{\omega_3} G_{\epsilon})=0$ where the Hamiltonian $\tilde G_{\epsilon}$ is a normal form of $H$ of (\ref{Eq1}) around double resonant point, given by (\ref{Hamiltonian}). Let $\mathcal{N}_{G_{\epsilon}}(c)$ denote the Ma\~n\'e set for $G_{\epsilon}$ in the extended configuration space.
Let $\mathbb{F}_{0,G_{\epsilon}}=\{c:\alpha_{G_{\epsilon}}=(c)=\min\alpha_{G_{\epsilon}}\}$. Because of the upper-semi continuity of Ma\~n\'e set with respect to small perturbation of Lagrangian (Hamiltonian), given $c\in\partial \mathbb{F}_{0,G_{\epsilon}}$ there exists $\epsilon_c>0$ so that for each $\epsilon\le\epsilon_c$ one has
\begin{equation}\label{c-equivalence}
\mathcal{N}_{G_{\epsilon}}(c)\cap\Sigma_c\subset (\cup I_{c,i})\times\mathbb{T},
\end{equation}
see the right figure of Figure \ref{fig10}.
\begin{figure}[htp]
  \centering
  \includegraphics[width=10cm,height=3cm]{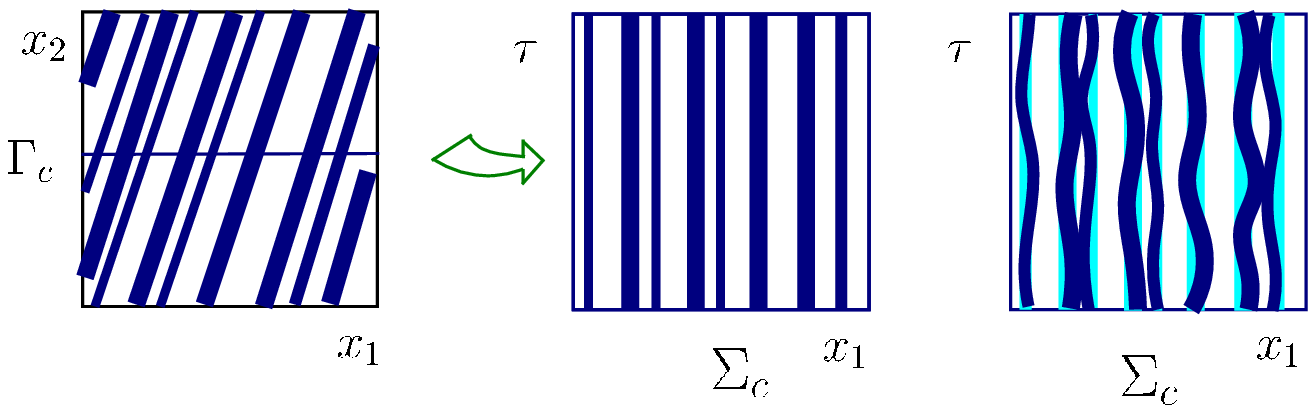}
  \caption{}
  \label{fig10}
\end{figure}
As the boundary of $\mathbb{F}_{0,G_{\epsilon}}$ is compact, some $\epsilon_0>0$ exists such that the relation (\ref{c-equivalence}) holds simultaneously for all $c\in\partial\mathbb{F}_{0,G_{\epsilon}}$ and $\epsilon\le\epsilon_0$. Using the upper semi-continuity of Ma\~n\' e set with respect to small perturbation of Lagrangian again, one obtains that Formula (\ref{c-equivalence}) holds for those $c$ such that $\alpha_{G_{\epsilon}} (c)\in (\min\alpha_{G_{\epsilon}},\min\alpha_{G_{\epsilon}}+\Delta_0)$.

To establish the cohomology equivalence, let us study what is the relation between the $\alpha$-function of the original autonomous Hamiltonian and the reduced Hamiltonian when it is restricted on certain energy level set.
\begin{theo}\label{flatthm5}
For the Hamiltonian $H(x,x_n,y,y_n)$ we assume that $\partial_{y_n}H\neq 0$ on $\{H^{-1}(E)\}$. Let $y_n=-\lambda Y(x,y,\tau)$ be the solution of $H(x,\frac 1{\lambda}\tau,y,-\lambda Y)=E$, let $\alpha_H$ and $\alpha_G$ be the $\alpha$-function for $L_H$ and $L_G$ respectively, where
$$
L_H(x,x_n,\dot x,\dot x_n)=\max_{y,y_n}\langle (\dot x,\dot x_n),(y,y_n)\rangle -H(x,y,x_n,y_n),
$$
$$
L_Y(x,y,\tau)=\max_{y}\langle \dot x,y\rangle -Y(x,y,\tau),
$$
Then we have $(c,-\lambda \alpha_Y(c))\in \alpha_H^{-1}(E)$.
\end{theo}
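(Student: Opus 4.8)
The plan is to recast the identity $\alpha_H(c,-\lambda\alpha_Y(c))=E$ as a correspondence between subsolutions of the stationary Hamilton-Jacobi equation attached to $H$ and subsolutions of the time-periodic Hamilton-Jacobi equation attached to the reduced Hamiltonian, and then to invoke the weak KAM characterisation of the $\alpha$-function twice. Recall that for a Tonelli Hamiltonian $\alpha_H(\bar c)$ is the least value $E$ for which $H(x,x_n,\bar c+du)=E$ admits a viscosity (equivalently a Lipschitz) subsolution $u$ on $\mathbb T^n$, the threshold being attained by a weak KAM solution; likewise $\alpha_Y(c)$ is the least $a$ for which $\partial_\tau w+Y(x,c+\partial_x w,\tau)=a$ admits a periodic viscosity subsolution $w$ on $\mathbb T^{n-1}\times\mathbb T$. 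Since $\partial_{y_n}H\neq0$ on $\{H^{-1}(E)\}$ and $H$ is convex and superlinear in the momenta, on the relevant branch the level set is the graph $y_n=-\mathcal K(x,x_n,y)$; convexity of $\{H\le E\}$ forces $\mathcal K$ to be convex and superlinear in $y$, so $\mathcal K$ is an $x_n$-periodic Tonelli Hamiltonian, and the passage $\tau=\lambda x_n$, $Y=\lambda^{-1}\mathcal K$ to the Hamiltonian of the statement scales actions and averaging lengths by $\lambda$, giving $\alpha_{\mathcal K}(c)=\lambda\,\alpha_Y(c)$.

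The crux is the pointwise equivalence: because $H$ is strictly monotone in $y_n$ on that branch, for every Lipschitz $u$ on $\mathbb T^n$ one has, almost everywhere and in the viscosity sense,
\[
H\bigl(x,x_n,\,c+\partial_x u,\,c_n+\partial_{x_n}u\bigr)\le E
\iff
\partial_{x_n}u+\mathcal K\bigl(x,x_n,\,c+\partial_x u\bigr)\le -\,c_n .
\]
Thus $u$ is a subsolution of the stationary equation for $H$ at level $E$ and cohomology $(c,c_n)$ precisely when the same $u$, read as an $x_n$-periodic family of functions on $\mathbb T^{n-1}$, is a subsolution of the reduced time-periodic equation for $\mathcal K$ at level $-c_n$ and cohomology $c$. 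Accordingly I set $c_n=-\alpha_{\mathcal K}(c)=-\lambda\,\alpha_Y(c)$.

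To prove $\alpha_H(c,-\lambda\alpha_Y(c))\le E$, take a Lipschitz subsolution $w$ of the reduced equation at level $\alpha_{\mathcal K}(c)$ (it exists, being a weak KAM solution); by the equivalence $w$ is a Lipschitz subsolution of the stationary equation for $H$ at level $E$ with cohomology $(c,-\alpha_{\mathcal K}(c))$, and the usual estimate via Fenchel's inequality along closed curves (using $\oint dw=0$) yields $\alpha_H(c,-\alpha_{\mathcal K}(c))\le E$. For the reverse inequality, suppose $\alpha_H(c,-\lambda\alpha_Y(c))=E'<E$ and let $u$ be a weak KAM solution, so $H(x,x_n,c+\partial_x u,c_n+\partial_{x_n}u)\le E'$ a.e. Solving $H=E'$ for $y_n$ gives $\mathcal K_{E'}$ with $\mathcal K_{E'}>\mathcal K_{E}=\mathcal K$, and by compactness of $\mathbb T^n$ the gap is uniform; hence the equivalence turns $u$ into a subsolution of the reduced equation at a level strictly below $\alpha_{\mathcal K}(c)$, contradicting the minimality of $\alpha_{\mathcal K}(c)$. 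The two inequalities give $\alpha_H(c,-\lambda\alpha_Y(c))=E$.

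I expect the difficulties to be bookkeeping rather than conceptual. One must check that $\{H^{-1}(E)\}$ is globally a graph over $(x,x_n,y)$ on a single branch, which uses $E$ lying above the fibrewise minimum of $H$ in the $y_n$-direction together with $\partial_{y_n}H\neq0$ and Tonelli coercivity, and that the period of $x_n$, now playing the role of time, is compatible with the period of $\mathcal K$, which is exactly where the rescaling $\tau=\lambda x_n$ must be carried carefully. One must also keep track that the $u$'s furnished by weak KAM theory are only Lipschitz viscosity subsolutions, so the closed-curve integration and the step producing a subsolution at a strictly lower level must be phrased in that regularity, using that for convex Hamiltonians an a.e.\ subsolution is automatically a viscosity subsolution. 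Finally, the branch $\partial_{y_n}H<0$ reduces to the one above by reversing time; in the application $\partial_{y_n}H$ carries the fixed sign of $\omega_3\neq0$, so only one branch actually occurs.
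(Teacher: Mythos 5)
Your argument is correct in substance but follows a genuinely different route from the paper. The paper works entirely on the Lagrangian side: it takes a $c$-minimal curve $\gamma$ of the reduced system, lifts it to $\tilde\gamma=(\gamma,x_n)$ for the full system reparametrized by $\tau=\lambda x_n$, and verifies the pointwise identity
$$
\Big(\big\langle\tfrac{dx}{d\tau},y-c\big\rangle-Y+\alpha_Y(c)\Big)\,d\tau=\big(\langle\dot{\tilde x},\tilde y-\tilde c\rangle-H+E\big)\,dt,
\qquad \tilde c=(c,-\lambda\alpha_Y(c)),
$$
using $H\equiv E$ along the orbit; this identifies the critical actions and, beyond the stated theorem, matches semi-static curves of the two systems (a stronger fact the paper exploits afterwards to identify the Ma\~n\'e sets). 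You instead work on the Hamilton--Jacobi side, characterizing $\alpha_H(\tilde c)$ and $\alpha_Y(c)$ as the least levels admitting Lipschitz viscosity subsolutions and converting one subsolution inequality into the other via monotonicity of $H$ in $y_n$. Your route is self-contained for the equality of critical values and treats both inequalities symmetrically, whereas the paper simply asserts that reparametrized minimal curves remain minimal; but two of your ``bookkeeping'' items deserve real care. First, the fiberwise sublevel set $\{H\le E\}$ is a bounded interval in $y_n$, not a half-line, so the equivalence $H\le E\iff y_n\le-\mathcal{K}$ holds only after restricting to the branch and region singled out by the sign of $\partial_{y_n}H$ (harmless in the application, where $\partial_{y_3}\tilde G_{\epsilon}$ carries the fixed sign of $\omega_3$ throughout the region of bounded momenta). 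Second, $\mathcal{K}$ is convex but \emph{not} superlinear in $y$: its fiberwise domain is bounded, with vertical tangency of the graph at the boundary, so it is not literally Tonelli and the subsolution characterization of $\alpha_{\mathcal{K}}$ must be justified for such Hamiltonians --- the vertical tangency does make the fiberwise Legendre transform finite for every velocity, so the theory goes through, and the paper's own definition of $L_Y$ by a fiberwise supremum faces exactly the same issue.
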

\begin{proof}
Let $\tilde c=(c,-\lambda\alpha_Y(c))$, $\tilde\gamma=(\gamma,\gamma_n)$, $\tilde x=(x,x_n)$ and $\tilde y=(y,y_n)$. Let $\gamma$ be $c$-minimal curve for the Lagrange flow $\phi_{L_F}^t$,  $\tilde\gamma$ is then $\tilde c$-minimal curve for the Lagrange flow $\phi_{L_H}^t$ if $\gamma_n=x_n$ and $\tilde\gamma$ is re-parameterized $\tau\to t$. If $x=x(\tau)$ is a solution of $\phi_{L_F}^t$, one obtains $y=y(\tau)$ from the Hamiltonian equations. Since $H(\tilde x(t),\tilde y(t))\equiv E$, we find
\begin{align*}
[A_Y(\gamma)]&=\int\Big(\Big\langle\frac{dx}{d\tau},y-c\Big\rangle-Y+\alpha_Y(c)\Big)d\tau\\
&=\int(\langle\dot{\tilde x},\tilde y-\tilde c\rangle -H+E)dt\\
&=[A_H(\tilde\gamma)].
\end{align*}
This completes the proof.
\end{proof}
Let $\pi_3: \mathbb{R}^3\to\mathbb{R}^{n-1}$ be the projection $\pi_3\tilde x=x$. By this theorem, $\pi_3^{-1}:H^1(\mathbb{T}^{2},\mathbb{R})\to \alpha_H^{-1}(E)$ is a homeomorphism for $c\in\mathbb{F}_0+d$, the $d$-neighborhood of the flat $\mathbb{F}_0$. Thus, what we obtained in this subsection have their counterpart in the energy level set $\{H^{-1}(E)\}$ where the class $\tilde c\in\pi_3^{-1}(\mathbb{F}_0+d)\cap\alpha^{-1}_H(E)$.   By such observation we obtain a cohomology equivalence as follows
\begin{theo}
Let $\alpha_{\tilde G_{\epsilon}}$ be the $\alpha$-function for the Hamiltonian $\tilde G_{\epsilon}$. The annulus-like surface
$$
\tilde{\mathbb{A}}=\alpha_{\tilde G_{\epsilon}}^{-1}(0)\cap\Big\{c_3\in \Big(-\frac{\sqrt{\epsilon}}{\omega_3}(\min\alpha_{G_{\epsilon}}+\Delta_0),\frac{\sqrt{\epsilon}}{\omega_3} \min\alpha_{G_{\epsilon}}\Big)\Big\}
$$
admits a foliation of contour circles of $\Gamma_{\lambda}=\{\tilde c:\alpha_{\tilde G_{\epsilon}}(\tilde c)=0,c_3=\lambda\}$, each of these circles establishes a relation of cohomology equivalence.
\end{theo}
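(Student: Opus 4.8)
The plan is to deduce the statement from the results already proved for the reduced two-degree-of-freedom system $G_{\epsilon}$ of (\ref{mainsystems}), using the order reduction of Theorem \ref{flatthm5} as the bridge. Fix the sign $\omega_{3}>0$. Applying Theorem \ref{flatthm5} with $E=0$, $\lambda=\sqrt{\epsilon}/\omega_{3}$ and $Y=G_{\epsilon}$, the homeomorphism $\pi_{3}^{-1}$ of the paragraph preceding the statement is the graph map
$$
\iota:\ c\ \longmapsto\ \tilde c=\Big(c,\ -\tfrac{\sqrt{\epsilon}}{\omega_{3}}\,\alpha_{G_{\epsilon}}(c)\Big),
$$
carrying a neighborhood of the flat $\mathbb{F}_{0,G_{\epsilon}}$ onto a neighborhood of $\tilde{\mathbb{F}}_{0}$ in $\alpha_{\tilde G_{\epsilon}}^{-1}(0)$. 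Under $\iota$ the level circle $C_{\Delta}:=\alpha_{G_{\epsilon}}^{-1}(\Delta)$ (inside that neighborhood) is sent to the contour circle $\Gamma_{\lambda}$, the parameters being affinely related so that $\lambda$ ranges over the interval defining $\tilde{\mathbb{A}}$ exactly when $\Delta\in(\min\alpha_{G_{\epsilon}},\min\alpha_{G_{\epsilon}}+\Delta_{0})$. Since $\alpha_{G_{\epsilon}}$ is convex with minimum set $\mathbb{F}_{0,G_{\epsilon}}$, each such $C_{\Delta}$ is a topological circle; these circles are pairwise disjoint and exhaust $\iota^{-1}(\tilde{\mathbb{A}})$, so the circles $\Gamma_{\lambda}=\iota(C_{\Delta})$ foliate $\tilde{\mathbb{A}}$ (which is therefore indeed annulus-like, being $\iota$ of a one-sided neighborhood of the disk $\mathbb{F}_{0,G_{\epsilon}}$). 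It then remains to establish, for each such $\Delta$, the cohomology equivalence of the definition given earlier in this section along $C_{\Delta}$: the reduction transports it to $\Gamma_{\lambda}$ because it identifies, after the reparametrization $\tau\to t$ from the proof of Theorem \ref{flatthm5}, the $\tilde c$-semi static curves of $\tilde G_{\epsilon}$ on $\tilde G_{\epsilon}^{-1}(0)$ with the $c$-semi static curves of $G_{\epsilon}$, hence the Ma\~n\'e set $\mathcal{N}_{\tilde G_{\epsilon}}(\tilde c)$ on the energy level with the extended Ma\~n\'e set $\mathcal{N}_{G_{\epsilon}}(c)$, while $\Sigma_{c}=S_{c}\times\mathbb{T}$ is a common section; it is a non-degenerately embedded $2$-torus in $\mathbb{T}^{3}$ since the transversal circle $S_{c}\subset\mathbb{T}^{2}$ is homologically non-trivial.

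The key input is the incompleteness of the Ma\~n\'e set proved in the preceding subsections: for every $c$ with $\alpha_{G_{\epsilon}}(c)\in(\min\alpha_{G_{\epsilon}},\min\alpha_{G_{\epsilon}}+\Delta_{0})$, in particular for $c\in C_{\Delta}$, there is a circle $S_{c}\subset\mathbb{T}^{2}$ transversal to all $c$-semi static curves and finitely many pairwise disjoint closed arcs $I_{c,1},\dots,I_{c,m_{c}}$, each properly contained in $S_{c}$, with $\mathcal{N}_{G_{\epsilon}}(c)\cap\Sigma_{c}\subset(\bigcup_{i}I_{c,i})\times\mathbb{T}$; the properness $\bigcup_{i}I_{c,i}\subsetneq S_{c}$ is exactly the content of Theorems \ref{th3.1}, \ref{pro3.3}, \ref{pro3.4} and \ref{thm3.1} together with the upper semi-continuity invoked after (\ref{c-equivalence}). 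From this I would compute $\mathbb{V}_{c}$. A thin enough tubular neighborhood $U\subset\mathbb{T}^{3}$ of $(\bigcup_{i}I_{c,i})\times\mathbb{T}$ deformation retracts onto it and is therefore homotopy equivalent to a disjoint union of $m_{c}$ circles, each winding once in the $\mathbb{T}$-factor; in $H_{1}(\mathbb{T}^{3},\mathbb{Z})$ each of them is homologous to the loop $\{\mathrm{pt}\}\times\mathbb{T}$, whose class I denote $e_{3}$. Hence $i_{U*}H_{1}(U,\mathbb{R})=\mathbb{R}e_{3}$, so $\mathbb{V}_{c}=\mathbb{R}e_{3}$ and
$$
\mathbb{V}_{c}^{\bot}=\{c'\in H^{1}(\mathbb{T}^{3},\mathbb{R}):\langle c',e_{3}\rangle=0\}=\{c'_{3}=0\}.
$$
Properness is essential here: had $\bigcup_{i}I_{c,i}=S_{c}$, one would get $\Sigma_{c}=\mathbb{T}^{2}$, a two-dimensional $\mathbb{V}_{c}$, and no freedom to move $c$.

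Finally I would verify the two requirements of the definition along $\Gamma_{\lambda}$. Parametrize $\Gamma_{\lambda}$ (equivalently $C_{\Delta}$) by a loop $\Gamma:[0,1]\to\tilde{\mathbb{A}}$. First, $\alpha_{\tilde G_{\epsilon}}(\Gamma(s))\equiv0$ is constant. Second, every point of $\Gamma_{\lambda}$ has third coordinate $c_{3}=\lambda$, so $\Gamma(s)-\Gamma(s_{0})$ has vanishing third component for all $s,s_{0}$; hence $\Gamma(s)-\Gamma(s_{0})\in\{c'_{3}=0\}=\mathbb{V}_{\Gamma(s_{0})}^{\bot}$, which yields the required inclusion not merely for $|s-s_{0}|$ small but globally along the circle, and the section $\Sigma_{\Gamma(s_{0})}$ is transversal to all $\Gamma(s_{0})$-semi static curves by the preceding subsections. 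Therefore any two classes on $\Gamma_{\lambda}$ are $c$-equivalent, and by the remark following the definition their Aubry sets are joined by local minimal orbits of type-$c$. I do not expect a genuinely hard new step in this argument: the substance — that the Ma\~n\'e set cannot cover the configuration torus, and that this survives the small time-periodic perturbation $\sqrt{\epsilon}R_{\epsilon}$ and a one-sided neighborhood of $\min\alpha_{G_{\epsilon}}$ — was already carried out in Section 2 and in the earlier part of Section 3. The points demanding care are the identification $\mathbb{V}_{c}=\mathbb{R}e_{3}$, which is precisely where that analysis enters, and the bookkeeping that $S_{c}$ and the arcs $I_{c,i}$ can be chosen uniformly over the compact circle $\Gamma_{\lambda}$ — guaranteed by compactness of $\partial\mathbb{F}_{0,G_{\epsilon}}$ and upper semi-continuity of the Ma\~n\'e set, as recorded just before the statement.
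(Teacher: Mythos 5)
Your proposal is correct and follows essentially the same route as the paper: reduce to the two-degree-of-freedom system via the energy-level correspondence of Theorem \ref{flatthm5}, identify $\mathcal{N}_{\tilde G_{\epsilon}}(\tilde c)$ with the extended Ma\~n\'e set $\mathcal{N}_{G_{\epsilon}}(c)$ along the graph $c_3=-\frac{\sqrt{\epsilon}}{\omega_3}\alpha_{G_{\epsilon}}(c)$, and use the confinement (\ref{c-equivalence}) in the section $\Sigma_c=S_c\times\mathbb{T}$ to get $\mathbb{V}_c\subseteq\mathrm{span}\{e_3\}$, so that classes with equal $c_3$ lie in $\mathbb{V}_c^{\bot}$. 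Your additional bookkeeping (the explicit tubular-neighborhood computation of $\mathbb{V}_c$, the convexity argument that the $C_\Delta$ are circles foliating the annulus, and the remark that properness of $\cup I_{c,i}\subsetneq S_c$ is what makes the argument work) only makes explicit what the paper leaves implicit.
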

\begin{proof}
As the Hamiltonian $G_{\epsilon}$ is a solution of the equation $\tilde G_{\epsilon}(x,\frac{\omega_3}{\sqrt{\epsilon}}\tau,y,-\frac{\sqrt{\epsilon}}{\omega_3} G_{\epsilon})=0$, each orbit of the Hamiltonian flow $\Phi^{\tau}_{G_{\epsilon}}$ in the extended phase space $\mathbb{R}^2\times\mathbb{T}^3$ is the projection of certain orbit of $\Phi^{\tau}_{\tilde G_{\epsilon}}$ in the energy level set $\tilde G_{\epsilon}^{-1}(0)$:
$$
(y(t),y_3(t),x(t),x_3(t))\to \Big(y(\tau),x(\tau),\frac{\omega_3}{\sqrt{\epsilon}}\tau\Big),
$$
where $\tau$ is parameterized by $t$ so that $\tau(t)=\frac{\sqrt{\epsilon}}{\omega_3} x_3(t)$. It is guaranteed by the condition $\partial_{x_3}\tilde G_{\epsilon}\ne 0$. Therefore, for autonomous Hamiltonian $\tilde G_{\epsilon}$, the Ma\~n\'e set $\mathcal{N}_{\tilde G_{\epsilon}}(c,c_3)$ is the same as the Ma\~n\'e set $\mathcal{N}_{G_{\epsilon}}(c)$ in the extended configuration space if $c_3=-\frac{\sqrt{\epsilon}}{\omega_3}\alpha_{G_{\epsilon}}(c)$.
So, if the section $\Sigma_c$ is chosen such that $\Sigma_c=S_c\times\mathbb{T}$, we obtain from (\ref{c-equivalence}) that
$$
V_c=\mathrm{span}\{(0,0,1)\}, \qquad \forall\ c\in \alpha_{G_{\epsilon}}^{-1} (\min\alpha_{G_{\epsilon}},\min\alpha_{G_{\epsilon}}+\Delta_0).
$$
In this case, one has $V_c^{\perp}=\mathrm{span}\{(1,0,0),(0,1,0)\}$. For any two classes $\tilde c$, $\tilde c'$ on the same circle $\Gamma_{\lambda}$, one has $\tilde c-\tilde c'=(c-c,0)\in V^{\perp}_c$, namely, if two classes are located on the same circle, then they are cohomologically equivalent.
\end{proof}

Let $\mathbb{A}=\pi_3\tilde{\mathbb{A}}$, it is an annulus in $\mathbb{R}^2$ sorrounding the flat $\mathbb{F}_{0,G_{\epsilon}}$. By the definition one can see
$$
\mathbb{A}=\{c:0<\alpha_{G_{\epsilon}}(c)-\min\alpha_{G_{\epsilon}}\le\Delta_0\}.
$$
As $G_{\epsilon}$ is a small perturbation of the mechanical system $G$, the thickness of the annulus $\mathbb{A}$ is of order $\sqrt{\Delta_0}$ for small $\Delta_0$. It is independent of $\epsilon$.

Recall the definition of Legendre-Fenchel duality between the first homology and the first cohomology. Given a Tonelli Hamiltonian $H$, one obtains the $\alpha$- as well as the $\beta$-function, denoted by $\alpha_H$ and $\beta_H$ respectively, which in turn defines $\mathscr{L}_{\beta_H}$: a first cohomology class $c\in \mathscr{L}_{\beta_H}(g)$  if $\alpha_H(c)+\beta_H(g)=\langle c,g\rangle$.

Let us recall Theorem 1.1 of \cite{C15}. Given two irreducible classes $g,g'\in H_1(\mathbb{T}^2,\mathbb{Z})$, for generic potential $V$ in the Hamiltonian $\tilde G_{\epsilon}$ of (\ref{mainsystems}), there exist two channels $\mathbb{W}_g$, $\mathbb{W}_{g'}\subset H^1(\mathbb{T}^2,\mathbb{R})$
$$
\mathbb{W}_g=\bigcup_{\nu\ge\nu_g}\mathscr{L}_{\beta_{G_{\epsilon}}}(\nu g), \qquad \mathbb{W}_{g'}=\bigcup_{\nu\ge\nu_{g'}}\mathscr{L}_{\beta_{G_{\epsilon}}}(\nu g')
$$
where $\nu_g,\nu_{g'}>0$ such that $\alpha_{G_{\epsilon}}(\nu_gg), \alpha_{G_{\epsilon}}(\nu_{g'}g')=3\epsilon^d$.  It implies that both channels $\mathbb{W}_g$ and $\mathbb{W}_{g'}$ reach to $O(\epsilon^{\frac d2})$-neighborhood of the flat $\mathbb{F}_{0,G_{\epsilon}}$. Since the thickness of the annulus $\mathbb{A}$ is of order $\sqrt{\Delta_0}>0$ which is indeoendent of $\epsilon$, we obtain

\begin{theo}[Overlap Property]\label{overlap}
Given any two irreducible $g,g'\in H_1(\mathbb{T}^2,\mathbb{Z})$, there exists a positive number $\epsilon_0= \epsilon_0(V,g,g')>0$ so that the channels intersect the annulus: $\mathbb{W}_g\cap\mathbb{A} \ne\varnothing$ and $\mathbb{W}_{g'}\cap\mathbb{A}\ne\varnothing$ provided $0<\epsilon\le\epsilon_0$. Moreover,
\begin{enumerate}
   \item the channel $\mathbb{W}_g$ is connected to the channel $\mathbb{W}_{g'}$ by circles $\Gamma_{\lambda}$ of cohomology equivalence;
   \item For each $c\in\mathrm{int}\mathbb{W}_g$ $(\mathrm{int}\mathbb{W}_{g'})$ the Aubry set is located in a normally hyperbolic invariant cylinder with the homology type of $g$ $(g')$.
\end{enumerate}
\end{theo}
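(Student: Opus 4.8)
The plan is to reduce the statement to a comparison of the two length scales that have already been produced. The first input is Theorem~1.1 of \cite{C15}: for the generic $V$ under consideration it furnishes, for each irreducible $g\in H_1(\mathbb{T}^2,\mathbb{Z})$, the channel $\mathbb{W}_g=\bigcup_{\nu\ge\nu_g}\mathscr{L}_{\beta_{G_\epsilon}}(\nu g)$ whose tip (at $\nu=\nu_g$) sits at an $\alpha_{G_\epsilon}$-value exceeding $\min\alpha_{G_\epsilon}$ by $3\epsilon^{d}$, and along which the Aubry set of every interior class lies on a normally hyperbolic invariant cylinder of homology type $g$; this last clause is exactly conclusion~2. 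The second input is the theorem just proved: the annulus $\mathbb{A}=\{c:0<\alpha_{G_\epsilon}(c)-\min\alpha_{G_\epsilon}\le\Delta_0\}$, whose thickness is of order $\sqrt{\Delta_0}$ and is independent of $\epsilon$, is foliated by the contour circles $\alpha_{G_\epsilon}^{-1}(\min\alpha_{G_\epsilon}+\Delta)$ with $\Delta\in(0,\Delta_0]$, each of which, lifted into $\tilde{\mathbb{A}}\subset\alpha_{\tilde G_\epsilon}^{-1}(0)$ by $\pi_3^{-1}$ of Theorem~\ref{flatthm5}, is a circle $\Gamma_\lambda$ realizing a relation of cohomology equivalence.

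I would first fix $\epsilon_0=\epsilon_0(V,g,g')>0$ small enough that both channels of \cite{C15} exist and that $3\epsilon_0^{d}<\Delta_0$, for every $0<\epsilon\le\epsilon_0$. Along $\mathbb{W}_g$ the continuous function $\nu\mapsto\alpha_{G_\epsilon}(c(\nu))-\min\alpha_{G_\epsilon}$ decreases to $3\epsilon^{d}$ as $\nu\downarrow\nu_g$ and grows without bound as $\nu\to\infty$; hence the set $\mathbb{W}_g^{\,\mathbb{A}}:=\{c\in\mathbb{W}_g:\alpha_{G_\epsilon}(c)-\min\alpha_{G_\epsilon}\le\Delta_0\}$ is a non-empty subset of $\mathbb{A}$, so $\mathbb{W}_g\cap\mathbb{A}\ne\varnothing$, and the same argument applied to $g'$ gives $\mathbb{W}_{g'}\cap\mathbb{A}\ne\varnothing$. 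This is the first assertion of the theorem.

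For conclusion~1, note that $\mathbb{W}_g^{\,\mathbb{A}}$ (resp.\ $\mathbb{W}_{g'}^{\,\mathbb{A}}$) runs from $\alpha_{G_\epsilon}$-value $\min\alpha_{G_\epsilon}+3\epsilon^{d}$ out to $\min\alpha_{G_\epsilon}+\Delta_0$, i.e.\ all the way across the width of $\mathbb{A}$, while each contour circle $\Gamma_\lambda$ in the foliation of $\mathbb{A}$ is a loop encircling the flat $\mathbb{F}_{0,G_\epsilon}$ at a fixed $\alpha_{G_\epsilon}$-level. Consequently any $\Gamma_\lambda$ whose level lies strictly between $\min\alpha_{G_\epsilon}+3\epsilon^{d}$ and $\min\alpha_{G_\epsilon}+\Delta_0$ meets both $\mathbb{W}_g$ and $\mathbb{W}_{g'}$. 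By Theorem~\ref{mainresult}, equivalently by the theorem just proved, all classes on such a $\Gamma_\lambda$ are cohomology equivalent; hence a point of $\mathbb{W}_g\cap\Gamma_\lambda$ is joined to a point of $\mathbb{W}_{g'}\cap\Gamma_\lambda$ by an arc of $\Gamma_\lambda$ along which cohomology equivalence holds, which is the precise meaning of ``$\mathbb{W}_g$ is connected to $\mathbb{W}_{g'}$ by circles $\Gamma_\lambda$ of cohomology equivalence''. Conclusion~2 is, as already noted, a restatement of the cylinder part of Theorem~1.1 of \cite{C15}.

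There is no genuinely deep step once the two inputs are granted; the proof rests on the scale inequality $3\epsilon^{d}<\Delta_0$. The point that deserves care is the topological fact that a channel joining a neighbourhood of the flat to $\partial\mathbb{A}$ must meet every contour circle it runs across, i.e.\ $\mathbb{W}_g\cap\Gamma_\lambda\ne\varnothing$; this follows from $\mathscr{L}_{\beta_{G_\epsilon}}(\nu g)$ depending upper-semicontinuously on $\nu$ together with $\alpha_{G_\epsilon}$ restricted to $\mathbb{W}_g$ being a continuous function of $\nu$ that sweeps the whole interval $[\min\alpha_{G_\epsilon}+3\epsilon^{d},\infty)$. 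One must also keep in mind that $\Delta_0$ was chosen, in Theorem~\ref{thm3.1} and the discussion following it, uniformly in $\epsilon$---through the upper semi-continuity of the Ma\~n\'e set under the perturbation $\sqrt{\epsilon}R_\epsilon$---which is exactly what guarantees $3\epsilon^{d}<\Delta_0$ for all sufficiently small $\epsilon$.
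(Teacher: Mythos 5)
Your argument is correct and is essentially the paper's own: the paper likewise deduces the theorem from Theorem~1.1 of \cite{C15} (channels reaching to $\alpha_{G_\epsilon}$-level $3\epsilon^{d}$, i.e.\ an $O(\epsilon^{d/2})$-neighbourhood of the flat) together with the $\epsilon$-independent thickness $\sqrt{\Delta_0}$ of the annulus of equivalence circles, so that for small $\epsilon$ each channel crosses the foliation and conclusion~2 is quoted directly from \cite{C15}. Your explicit intermediate-value step showing $\mathbb{W}_g\cap\Gamma_\lambda\ne\varnothing$ for each intermediate level only makes precise what the paper leaves implicit.
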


By the definition, two channels $\tilde{\mathbb{W}}_g,\tilde{\mathbb{W}}_{g'}\subset \alpha_{\tilde G_{\epsilon}}^{-1}(0)$ are well-defined so that $\pi_3\tilde{\mathbb{W}}_g=\mathbb{W}_g$ and $\pi_3\tilde{\mathbb{W}}_{g'}=\mathbb{W}_{g'}$. Also, a flat $\tilde{\mathbb{F}}_{0,\tilde G_{\epsilon}}\subset \alpha_{\tilde G_{\epsilon}}^{-1}(0)$ exists so that $\pi_3\tilde{\mathbb{F}}_{0,\tilde G_{\epsilon}}=\mathbb{F}_{0,G_{\epsilon}}$.

Back to the original coordinates $(p,q)\in\mathbb{R}^3\times\mathbb{T}^3$. For the Hamiltonian (\ref{Eq1}) one has the $\alpha$-function, denoted by $\alpha_H$. For $E>\min h$, the set $\alpha_{H}^{-1}(E)$ is homeomorphic to a sphere. A strong double resonance correspond to a $2$-dimensional falt on this sphere, denoted by $\tilde{\mathbb{F}}_{g,g'}$, with the size of order $\sqrt{\epsilon}$ and surrounded by an annulus $\tilde{\mathbb{A}}$,  The width of $\tilde{\mathbb{A}}$ is of order $\sqrt{\Delta_0\epsilon}$ where $\Delta_0>0$ is a small number but independent of $\epsilon$. The annulus $\tilde{\mathbb{A}}$ admits a foliation circles of cohomology equivalence.  The resonance relation determined by $g$ as well as $g'$ determines channel $\tilde{\mathbb{W}}_g$ as well as $\tilde{\mathbb{W}}_{g'}$, both get $\sqrt{\epsilon}^{1+d}$-close to the flat $\tilde{\mathbb{F}}_{g,g'}$ so they are connected by circles of cohomology equivalence.

\section{Generalized transition chain}
\setcounter{equation}{0}
The concept of transition chain was proposed by Arnold in \cite{A66} for the construction of diffusion orbits. It was formulated in geometric language. The generalized transition chain formulated in our previous work \cite{CY1,CY2,LC} is a combination of Arnold's mechanism and the mechnism of cohomology equivalence. It is in variational language which requires less information about the geometric structure.

\subsection{Definition for autonomous Hamiltonian} In this case, the minimal point set of barrier function can never be totally disconnected. To see this, let us recall the definition of barrier function in \cite{M93}:
$$
B_c^*(\tilde x)=\min_{m,m'\in\mathcal{M}(c)}\{h_c^{\infty}(m,\tilde x)+h_c^{\infty}(\tilde x,m')-h_c^{\infty}(m,m')\}.
$$
Clearly, the minimal point set of $B_c^*$ is exactly the set $B_c^{*-1}(0)=\mathcal{N}(c)$. For $x\in \mathcal{N}(c)$, there is at least one semi-static curve passing through this point. The barrier function reaches its minimum along the whole curve. For simplicity of notation we say $B_c^{*-1}(0)$ is totally disconnected in an open set $U$ if there exists certain section $D$ such that the restriction of $B_c^{*-1}(0)$ on $D$, denoted by $B_c^{*-1}(0)|_D$, is totally disconnected in $U$.

To define transition chain,  we need to work on some finite covering of the configuration manifold. Let $\check\pi$: $\check{M}\to M$ be a finite covering of the configuration space and let $\mathcal{N}(c,\check{M})$ be the Ma\~n\'e set with respect to $\check M$. In some cases one has $\check\pi \mathcal{N}(c,\check{M})\supsetneq\mathcal{N}(c)$. For instance, if a Mather set is located on a lower dimensional torus which is normally hyperbolic, the $\alpha$-function will have a flat. For each cohomology class in the interior of the flat, the Ma\~n\'e set is the same as the Mather set.  If we choose a covering space so that the lift of the Mather set contains two connected components,  then the Ma\~n\'e set with respect to the covering space will contain the minimal orbit connecting these two components. A concret example is a pendulum $H(x,y)=\frac 12(y^2-(1-\cos x))$. The set $\check\pi\mathcal{N}(0,2\mathbb{T})$ is made up by the fixed point and the separatrix and $\mathcal{N}(0,\mathbb{T})$ is just the fixed point.

\begin{defi} [Autonomous Case]\label{chaindef1}
Let $c$, $c'$ be two cohomolgy classes in $H^1(M,\mathbb{R})$. We say that $c$ is joined with $c'$ by a generalized transition chain if a continuous curve $\Gamma$: $[0,1]\to H^1(M,\mathbb{R})$ exists such that $\Gamma(0)=c$, $\Gamma(1)=c'$, $\alpha(\Gamma(s))\equiv E>\min\alpha$ and for each $s\in [0,1]$  at least one of the following cases takes place:
\begin{enumerate}
   \item there exist certain finite covering $\check{\pi}:\check{M}\to M$, two open domains $N_1,N_2\subset\check M$ with $d(N_1,N_2)>0$, a codimeniosn one disk $D_{s}$ and small numbers $\delta_s,\delta'_s>0$ such that
   \begin{enumerate}
      \item $\mathcal{A}(\Gamma(s))\cap N_1\neq\varnothing$, $\mathcal{A}(\Gamma(s))\cap N_2\neq\varnothing$ and $\mathcal{A}(\Gamma(s'))\cap (N_1\cup N_2)\neq\varnothing$ for each $|s'-s|<\delta_s$,
      \item there exists a section $D_s$ such that $\check\pi\mathcal{N}(\Gamma(s),\check M)|_{D_{s}}\backslash (\mathcal{A}(\Gamma(s))+\delta'_s)$ is non-empty and totally disconnected;
   \end{enumerate}
   \item there exists $\delta_s>0$, for each $s'\in (s-\delta_s,s+\delta_s)$, $\Gamma(s')$ is equivalent to $\Gamma(s)$.
\end{enumerate}
\end{defi}

In Case 1, if the Aubry set contains only one Aubry class, one can take some finite covering $\check\pi:\check M\to M$ non-trivial if $H_1(M,\mathcal{A},\mathbb{Z})\neq 0$. A typical case is that $\mathcal{A}(\Gamma(s))$ is contained in a small neighborhood of lower dimensional torus. One takes suitable finite covering space so that $\mathcal{A}(\Gamma(s),\check M)$ contains exactly two connected components. If $\mathcal{A}(\Gamma(s))$ contains more than one class, we choose $\check M=M$.

By the definition, for each cohomology class $\Gamma(s)$, the Aubry set $\tilde{\mathcal{A}}(\Gamma(s))$ can be connected to certain Aubry set $\tilde{\mathcal{A}}(\Gamma(s'))$ nearby, either by Arnold's mechanism which looks like heteroclinic orbits as shown in the first case, or by cohomology equivalence. The existence of generalized transition chain implies the existence of sequence of local connecting orbits $(\gamma_i,\dot\gamma_i)$, a sequence of numbers $s_i$ such that $\alpha(\gamma_i,\dot\gamma_i)\subset \mathcal{A}(\Gamma(s_i))$ and $\omega(\gamma_i,\dot\gamma_i))\subset\mathcal {A}(\Gamma(s_{i+1}))$. Global connecting orbits are constructed shadowing these local connecting orbits, one can refer \cite{LC} for the details of construction of diffusion orbits for autonomous Hamiltonian.

\subsection{Diffusion path: generalized transition chain}
It has become very clear that a generalized transition has been found to cross double resonance, it is a  cusp-residual property.

Let $\mathfrak{B}_a\subset C^r(\mathbb{T}^3\times B)$ be a ball, $F\in\mathfrak{B}_a$ if and only $\|F\|_{C^r}\le a$. Let $\mathfrak{S}_a=\partial \mathfrak{B}_a$ be the sphere, $F\in\mathfrak{S}_a$ if and only $\|F\|_{C^r}=a$. It inherits the topology from $C^r(\mathbb{T}^3\times B)$.
Let $\mathfrak{R}_a$ be a set residual in $\mathfrak{S}_a$, each $P\in\mathfrak{R}_a$ is associated with a set $R_P$ residual in the interval $[0,a_P]$ with $a_P\le a$. A set $\mathfrak{C}_a$ is said cusp-residual in $\mathfrak{B}_a$ if
$$
\mathfrak{C}_a=\{\lambda P:P\in\mathfrak{R}_a,\lambda\in R_P\}.
$$
For the nearly integrable Hamiltonian of (\ref{Eq1}) we rewrite here
\begin{equation*}
H(p,q)=h(p)+\epsilon P(p,q),\qquad (p,q)\in\mathbb{R}^3\times\mathbb{T}^3,
\end{equation*}
we choose a ball $B\subset\mathbb{R}^3$ such that $h^{-1}(E)\subset B$.

Given a resonance path $\Gamma_{k'}=\{p\in h^{-1}(E):\langle\partial h(p),k'\rangle=0\}$, a typical perturbation $P$ determines certain $\epsilon_P>0$ such that $\forall$ $\epsilon\in(0,\epsilon_P)$ there are finitely many points $\{p''_i\in\Gamma_{k'}\}$ which have to be handled as strong double resonance, the number of such points is independent of $\epsilon$ (see Section 6 of \cite{C15}). In an $O(\epsilon^{\sigma})$-neighborhood of such points ($\sigma<\frac 16$) one obtains a normal form $\tilde G_{\epsilon}$ of (\ref{Hamiltonian}) rewritten here, in which $\tilde y$ ranges over the set $|\tilde y|=\epsilon^{\sigma-\frac 12}$,
\begin{equation}\label{normalformat2resonance}
\tilde G_{\epsilon}=\frac{\omega_3}{\sqrt{\epsilon}}y_3+\frac 12\langle\tilde A\tilde y, \tilde y\rangle
-V(x)+\sqrt{\epsilon}\tilde R_{\epsilon}(\tilde x,\tilde y).
\end{equation}
Because of a result in \cite{Lo}, the path $\Gamma_{k'}$ is covered by $O(T_i^{-1}\epsilon^{\sigma})$-neighborhood of double resonant points $p''_i$, where $T_i=\min\{T\in\mathbb{Z}^+:T\partial h(p''_i)\in\mathbb{Z}^3\backslash 0\}$ is the period of $\partial h(p''_i)$ and $T_i\le K\epsilon^{-\frac 13(1-3\sigma)}$. Restricted on $O(T_i^{-1}\epsilon^{\sigma})$-neighborhood of $p''_i$ one also obtains the normal form of (\ref{normalformat2resonance}) where the potential $V$ admits a decomposition
$$
V(x)=V_1(\langle k',\tilde x\rangle)+V_2(\langle k',\tilde x\rangle,\langle k''_i,\tilde x\rangle),
$$
where the irreducible integer $k''_i\in\mathbb{Z}^3$ is the second resonance relation.
For typical $P$, the second derivative of $V_1$ at its minimal point is a positive number independent of $\epsilon$. The $C^2$-norm of $V_2$ decreases to zero as $|k''_i|\to\infty$. Consequently, except for finitely many (independent of $\epsilon$), all other double resonance can be treated as single resonance, the Hamiltonian flow $\Phi^t_{\tilde G_{\epsilon}}$ admits several NHICs which is a small deformation of NHICs of the flow produced by
$$
\frac{\omega_3}{\sqrt{\epsilon}}y_3+\frac 12\langle\tilde A\tilde y, \tilde y\rangle-V_1(\langle k',\tilde x\rangle),
$$
Both terms $V_2$ and $\sqrt{\epsilon}\tilde R_{\epsilon}$ are thought as small perturbation.

Recall the process to get the normal form (\ref{normalformat2resonance}), one introduces a linear coordinate transformation $(p,q)\to (A^{-t}p,Aq)$ so that $\partial h(p''_i)=(0,0,\omega_3)$, a step of KAM iteration $\Phi_i$ and homogenization.
$$
\tilde G_{\epsilon}=\frac 1{\epsilon}\Phi_i^*H, \qquad \tilde y=\frac 1{\sqrt{\epsilon}}\Big(p-p''_i\Big), \qquad \tilde x=q, \qquad s=\sqrt{\epsilon}t.
$$
Therefore, we obtain a candidate of generalized transition chain:

{\it The Hamiltonian flow $\Phi_H^t$ admits invariant cylinders, normal hyperbolic in $H^{-1}(E)$ which contain the Mather sets for all classes $\tilde c\in\tilde{\mathbb{W}}_{k',\tilde E}\backslash\cup(\tilde{\mathbb{F}}_{i,\tilde E}+\epsilon^{\frac{1+d}2})$ where $\tilde{\mathbb{W}}_{k',\tilde E}$ is defined as in $($\ref{resonantpathinc}$)$
$$
\tilde{\mathbb{F}}_{i,\tilde E}=\Big\{\tilde c\in H^1(\mathbb{T}^3,\mathbb{R}):\alpha_H(\tilde c)=\tilde E,\langle\rho(\mu_{\tilde c}),k'\rangle=0,\langle\rho(\mu_{\tilde c}),k''_i\rangle=0\Big\},
$$
$\rho(\mu_{\tilde c})$ denotes the rotation vector of ergodic $\tilde c$-minimal measure $\mu_{\tilde c}$ and the index $i$ is taken only from those strong double resonant points. Nevertheless, the hyperbolicity becomes much weaker, multiplied by the factor $\sqrt{\epsilon}$.}

In conclusion, a cusp-residual set $\mathfrak{P}_0\subset\mathfrak{B}_a$ exists so that for each $\epsilon P\in\mathfrak{P}$ one has a candidate of generalized transition chain. This set is open. As it shown above, given a typical $P\in C^r(B\times\mathbb{T}^3)$, these NHICs exist for any $\epsilon\in(0,\epsilon_P)$. For $P'$ approaches $P$, $\epsilon_{P'}$ approaches $\epsilon_P$. In the establishment of cohomology equivalence around double resonance, one only needs to perturb the function $P(p,q)\to P(p,q)+V(q)$ for finitely many double resonant points. Let $\mathfrak{B}_d(\epsilon P)\subset C^r(B\times\mathbb{T}^3)$ denote a ball with radius $d$, centered at $\epsilon P$.

\begin{lem}
Each perturbation $\epsilon P\in\mathfrak{P}_0$ is associated with a number $d=d(\epsilon P)>0$ so that the ball $\mathfrak{B}_d(\epsilon P)$ contains a residual set $\mathfrak{R}_d(\epsilon P)$. For each $\epsilon'P'\in\mathfrak{R}_d(\epsilon P)$, the Hamiltonian $H=h+\epsilon'P'$ admits a generalized transition chain connecting any two classes $\tilde c,\tilde c'\in\tilde{\mathbb{W}}_{k',\tilde E}\backslash\cup(\tilde{\mathbb{F}}_{i,\tilde E}+\epsilon^{\frac{1+d}2})$.
\end{lem}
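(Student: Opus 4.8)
The plan is to read this lemma as the packaging statement that glues the candidate of generalized transition chain (the NHICs along $\tilde{\mathbb{W}}_{k',\tilde E}$, whose existence is open in $\mathfrak{B}_a$) to the contour circles of cohomology equivalence produced around each strong double resonance by the Overlap Property, Theorem~\ref{overlap}. The residual set $\mathfrak{R}_d(\epsilon P)$ will be exactly the set of perturbations in $\mathfrak{B}_d(\epsilon P)$ for which the genericity hypotheses needed to invoke Theorem~\ref{overlap} hold \emph{simultaneously} at every point of $\Gamma_{k'}$ that has to be treated as a strong double resonance.

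First I would fix $\epsilon P\in\mathfrak{P}_0$ and recall, from Section~6 of \cite{C15} and the covering estimate of \cite{Lo}, that along $\Gamma_{k'}$ only finitely many points $p''_1,\dots,p''_N$ must be handled as strong double resonance, with $N$ independent of $\epsilon$; at every other double resonant point the second potential $V_2$ has small $C^2$-norm and the resonance is effectively single, so the NHICs following $k'$ persist there. Since $\mathfrak{P}_0$ is open, I would then choose $d=d(\epsilon P)>0$ so small that for all $\epsilon'P'\in\mathfrak{B}_d(\epsilon P)$ the number of strong double resonant points is still $N$, each $p''_i=p''_i(\epsilon'P')$ varies continuously, and the single-resonance part of the chain (the NHICs) survives.

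Next I would build $\mathfrak{R}_d(\epsilon P)$. At each $p''_i$ the homogenized normal form (\ref{normalformat2resonance}) reduces on the relevant energy level to a mechanical system $G=\frac12\langle Ay,y\rangle-V_i(x)$ on $\mathbb{T}^2$, and Theorems~\ref{thm3.1} and \ref{overlap} apply once $V_i$ lies in the residual set $\mathfrak{V}\subset C^r(\mathbb{T}^2,\mathbb{R})$ of Theorem~\ref{thm3.1} and the generic conditions (\textbf{H1}), (\textbf{H2}) hold. Perturbing $P(p,q)\to P(p,q)+V(q)$, the averaging that produces $V_i$ from $V$ is continuous and open, so the set of perturbations realizing $V_i\in\mathfrak{V}$ and (\textbf{H1}), (\textbf{H2}) at $p''_i$ is residual in $\mathfrak{B}_d(\epsilon P)$; following the localization argument in the proof of Theorem~\ref{thm3.1}, the finitely many conditions at $p''_1,\dots,p''_N$ can be arranged one at a time with perturbations supported near each $p''_i$ that leave the structure at the other resonant points and along the rest of $\Gamma_{k'}$ intact, so their intersection $\mathfrak{R}_d(\epsilon P)$ is again residual.

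Finally, for $\epsilon'P'\in\mathfrak{R}_d(\epsilon P)$ and $\tilde c,\tilde c'\in\tilde{\mathbb{W}}_{k',\tilde E}\setminus\bigcup_i(\tilde{\mathbb{F}}_{i,\tilde E}+\epsilon^{\frac{1+d}2})$, I would take a path $\Gamma$ in $\tilde{\mathbb{W}}_{k',\tilde E}\subset\alpha_H^{-1}(\tilde E)$ joining them; on the portions avoiding the $\epsilon^{\frac{1+d}2}$-neighborhoods of the flats the Aubry set lies on a NHIC and the standard single-resonance mechanism (type~$h$ and type~$c$, exactly as in \cite{CY1,CY2,LC}) verifies Definition~\ref{chaindef1}, while each time $\Gamma$ must pass a flat $\tilde{\mathbb{F}}_{i,\tilde E}$ I would replace the offending segment by a detour along a contour circle $\Gamma_\lambda$ inside the annulus $\tilde{\mathbb{A}}$ of Theorem~\ref{overlap}, which realizes case~(2) of Definition~\ref{chaindef1} and hooks the incoming channel $\tilde{\mathbb{W}}_g$ to the outgoing one $\tilde{\mathbb{W}}_{g'}$. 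Concatenation gives the chain. The hard part will be the junction bookkeeping in this last step: one must check that the reduced channels $\tilde{\mathbb{W}}_g,\tilde{\mathbb{W}}_{g'}$ from the two-degree-of-freedom analysis are genuinely the traces of $\tilde{\mathbb{W}}_{k',\tilde E}$ on the two sides of $p''_i$, and that the Aubry sets match across the seam --- this uses the inequality $\sqrt\epsilon^{\,1+d}\ll\sqrt{\Delta_0\epsilon}$ (the NHICs reach to within $\sqrt\epsilon^{\,1+d}$ of the flat while the annulus has $\epsilon$-independent relative thickness $\sqrt{\Delta_0}$) together with the upper semicontinuity of the Ma\~n\'e set to guarantee that the NHIC pieces and the annulus actually overlap; a secondary nuisance is to keep the localized potential perturbations of the previous step from disturbing the NHICs already present elsewhere.
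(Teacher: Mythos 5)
Your proposal reconstructs the gluing architecture (finitely many strong double resonances, openness of the candidate chain, detours along the circles $\Gamma_\lambda$ of cohomology equivalence), but it misses what the residual set $\mathfrak{R}_d(\epsilon P)$ is actually for in this lemma, and with it the entire technical content of the paper's proof. The double-resonance genericity (hypotheses (\textbf{H1}), (\textbf{H2}), Theorem \ref{thm3.1}, and hence the Overlap Property) is an \emph{open} condition by upper semi-continuity of the Ma\~n\'e set, and is already packaged into the open set $\mathfrak{P}_0$; it is not what forces a residual refinement. What does require it is the step you dismiss as ``the standard single-resonance mechanism \dots verifies Definition \ref{chaindef1}'': one must show that, for a residual set of perturbations, the minimal homoclinic orbits to the invariant $2$-tori sitting in the NHICs are totally disconnected (condition 1(b) of Definition \ref{chaindef1}) \emph{simultaneously for the uncountable family} of cohomology classes along the channel. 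A class-by-class Kupka--Smale argument cannot produce this, because the relevant classes form a Cantor family.

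The paper's proof is devoted precisely to this point, via the machinery of \cite{CY1}: one uses Moser's isotopy argument to put the return map on each cylinder $\Pi_i$ into standard area-preserving form, parameterizes the invariant circles by the enclosed area $\sigma$ with the H\"older-$\frac12$ estimate (\ref{holder}), deduces that the family of barrier functions $\{B^*_{c(\sigma)}\}$ has box dimension at most $2$ in $C^0$ (crucially, this depends only on the H\"older exponent, not on the coefficient, which here blows up like $C^*M\epsilon^{-2d\mu_6-\frac12}$ --- a point that must be checked and that your proposal does not address), and then observes that the set of functions whose minimum set is not totally disconnected has ``infinite codimension'', so a single arbitrarily small bump-function translation $\{B^*_{c(\sigma)}\}\to\{B^*_{c(\sigma)}+\delta G\}$ avoids it for all $\sigma$ at once; finally the perturbation of the generating function is converted back into a $C^r$-small Hamiltonian perturbation via a symplectic isotopy and Weinstein's argument. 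None of these steps appears in your proposal, so as written it does not prove the lemma; the junction bookkeeping you flag as ``the hard part'' is comparatively routine and is handled after the lemma in the text.
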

\begin{proof}
Since the set $\mathfrak{P}_0$ is open, for each $\epsilon P\in\mathfrak{P}_0$, there exists some $d=d(\epsilon P)>0$ such that $\mathfrak{B}_d(\epsilon P)\subset\mathfrak{P}_0$. For each $\epsilon'P'\in \mathfrak{B}_d(\epsilon P)$ there is a candidate of generalized transition chain for the Hamiltonian $h+\epsilon'P'$. We want to show the existence of a set $\mathfrak{R}_d(\epsilon P)$ residual in $\mathfrak{B}_d(\epsilon P)$ such that for each $\epsilon'P'\in \mathfrak{R}_d(\epsilon P)$, the minimal homiclinic orbits to invariant $2$-torus in these NHICs are totally disconnected.

We use the technique designed in \cite{CY1}. In the homogenized coordinate, the normal form takes the form of (\ref{normalformat2resonance}). Restricted on the energy level set, it is equivalent to the system (\ref{mainsystems}) which has two and half degrees of freedom. The time-periodic map defined by the Hamiltonian flow has NHICs $\{\Pi_i\}$ which are symplectic sub-manifolds. We consider invariant circles $\{\Gamma_{\nu}\}$ on $\Pi_i$. Each circle is the time-periodic map of some Aubry set. Because of the Lipschitz property of Aubry set and the smoothness of the cylinder, these circles are all Lipschitz curves. Each $\Pi_i$ can be thought as the image of $\Pi=\{(x_1,x_2,y_1,y_2):x_2=y_2=0,x_1\in\mathbb{T},y_1\in [0,1]\}$ under the map $\psi$: $\Pi\to\Pi_i$. This map induces a $2$-form $\psi^*\omega$ on $\Pi$
$$
\psi^*\omega=D\psi dx_1\wedge dy_1
$$
where $D\psi$ is the Jacobian of $\psi$. Since the second de Rham cohomology group of $\Pi$ is trivial, it follows from Moser's argument on the isotopy of symplectic forms \cite{Mo} that there exists a diffeomorphism $\psi_1$ on $\Pi$ such that
$$
(\psi\circ\psi_1)^*\omega=dx_1\wedge dy_1.
$$
Since $\Pi$ is invariant for the time-periodic map $\Phi_{G_{\epsilon}}$ and $\Phi_{G_{\epsilon}}^*\omega=\omega$, one has
$$
\Big((\psi\circ\psi_1)^{-1}\circ\Phi_{G_{\epsilon}}\circ(\psi\circ\psi_1)\Big)^* dx_1\wedge dy_1=dx_1\wedge dy_1
$$
i.e. $(\psi\circ\psi_1)^{-1}\circ\Phi_{G_{\epsilon}}\circ(\psi\circ\psi_1)$ preserves standard area. These invariant circles $\Gamma_{\nu}$ are pulled back to the standard cylinder, denoted by $\Gamma^*_{\nu}$ which are obviously Lipschitz. Therefore, these invariant circles is parameterized by the area bounded by this circle and a prescribed circle \cite{CY1}
\begin{equation}\label{holder}
\|\Gamma^*_{c(\sigma)}-\Gamma^*_{c(\sigma')}\|_{C^0}\le C^*\sqrt{|\sigma-\sigma'|}.
\end{equation}
By using the normal hyperbolicity, the pull-back of local stable and unstable manifolds are also parameterized by $\sigma$ with $\frac 12$-H\"older continuity. Since the map $\psi,\psi_0$ are smooth, in the homogenized coordinates, these stable and unstable manifold are also parameterized $\sigma$ with $\frac 12$-H\"older continuity, may with different H\"older coefficient. Since these cylinders do not touch the region where $G_{\epsilon}\le \epsilon^d$ with suitably small $d>0$, one obtains from Formula (2.20) in \cite{C15} that the coefficient is bounded by $C^*M\epsilon^{-2d\mu_6}$, where $\mu_6>0$ is small, both $M$ and $\mu_6$ are independent of $\epsilon$. Back to the $(p,q)$-coordinate, the coefficient is bounded by $C^*M\epsilon^{-2d\mu_6-\frac 12}$.

The rest of the proof is following the argument in Section 6 of \cite{CY1}. We emphasize that the large coefficient $C^*M\epsilon^{-2d\mu_6-\frac 12}$ does not damage the proof. By choosing $d>0$ sufficiently small, $\epsilon'\ge\frac 12\epsilon$ holds for all $\epsilon'P'\in \mathfrak{B}_d(\epsilon P)$.

To show the transversal intersection of the stable and unstable manifolds we only need to consider those cohomology classes for which the Mather is a $2$-torus. These classes are parameterized by the area $\sigma$ so that (\ref{holder}) holds. One Hamiltonian produces a set of barrier functions $H\to\{B^*_{c(\sigma)}\}$. Because $\sigma$ is defined on a Cantor set on line, the box dimension of $\{B^*_{c(\sigma)}\}$ is not larger than $2$ in $C^0$-topology, due to the H\"older exponent in (\ref{holder}) which equals $\frac 12$, no matter how large the coefficient is.

Under the perturbation of bump function as designed in \cite{CY1}, the set of barrier function undergoes a translation $\{B^*_{c(\sigma)}\}\to\{B^*_{c(\sigma)}+\delta G\}$ when they are restricted on a disk $D$ where the minimal homoclinic curves pass through.

We define a set $\mathfrak{Z}\subset C^0$. A function $U\in\mathfrak{Z}$ if the set $U^{-1}(\min U)\cap D$ is not totally disconnected. Intuitively, this set has infinite ``codimensions" in $C^0(\mathbb{T}^2)$. Since the box dimension of $\{B^*_{c(\sigma)}\}$ is finite, there are abundant perturbations of shit $\{B^*_{c(\sigma)}\}\to\{B^*_{c(\sigma)}+\delta G\}$ so that $\{B^*_{c(\sigma)}+\delta G\}\cap\mathfrak{Z}=\varnothing$. It implies that all minimal homoclinic orbits are totally disconnected for perturbed Hamiltonian. Obviously, the perturbation can be arbitrarily small.

The perturbation to the generating function $G$ can be achieved by perturbing the Hamiltonian $H\to H'=H+\delta H$. Let $\Phi'$ be the generating function $G+\delta G$, the symplectic diffeomorphism $\Psi=\Phi'\circ\Phi^{-1}$ is close to identity. Let $\rho(s)$ be a smooth function such that $\rho(0)=0$ and $\rho(1)=1$, let $\Phi'_s$ be the symplectic map produced by $G+\rho(s)\delta G$ and let $\Psi_s=\Phi'_s\circ\Phi^{-1}$. Obviously, $\Psi_s$ defines a symplectic isotopy between identity and $\Psi$. Therefore, there is a family of symplectic vector fields $X_s$: $T^*M\to TT^*M$ such that
$$
\frac d{ds}\Psi_s=X_s\circ\Psi_s.
$$
By the choice of perturbation, there is a simply connected and compact domain $D$ such that $\Psi_s|_{T^*M\backslash D}=\mathrm{id}$. It follows that there is a Hamiltonian $H_1(x,y,s)$ such that $dH_1(Y)=dy\wedge dx(X_s,Y)$ holds for any vector field $Y$. By using a Weinstein's result one sees that $dH_1$ is small. Since Hamiltomorphism is a subgroup of symplectic diffeomorphism, there is a Hamiltonian $H'$ close to $H$ such that $\Phi_{H_1}\circ\Phi_H=\Phi^t_{H'}|_{t=1}$. For details, one can refer to Section 6 of \cite{CY1}.
\end{proof}

As the set $\mathfrak{P}_0$ are covered by those small balls $\mathfrak{B}_d(\epsilon P)$, there exist a set $\mathfrak{P}'$ residual in $\mathfrak{P}_0$ such that for each $\epsilon P\in\mathfrak{P}'$ the Hamiltonian flow admits a generalized transition chain. It follows from Kuratowski-Ulam theorem that some cusp-residual set $\mathfrak{P}$ exists such that a generalized transition chain exists for each $\epsilon P\in\mathfrak{P}$.

\subsection{Genericity in the sense of Ma\~n\'e}
We are going to study of potential perturbation of integrable Hamiltonian $H=h(p)+\epsilon P(q)$. In this case, one also has a cusp-residual set $\mathfrak{P}_0\subset\mathfrak{B}_a\subset C^r(\mathbb{T}^3)$ such that for each $\epsilon P(q)\in\mathfrak{P}_0$  one has a candidate of generalized transition chain. This set is also open.

To obtain generalized transition chain, let us construct the potential perturbations. It has been shown in \cite{C15} that for each $\tilde c\in\mathrm{int}\tilde{\mathbb{W}}_g$ the Mather set stays on some normally hyperbolic invariant cylinder (NHIC), denoted by $\tilde\Pi_i$ which is diffeomorphic to $[p,p']\times\mathbb{T}^2$. By suitably choosing a section of $\tilde\Pi_i$, denoted by $\Pi_i$ which is diffeomorphic to $[p,p']\times\mathbb{T}$, one obtains a return map which is twist and area-preserving. By \cite{CY1}, all invariant curves on $\Pi_i$ can be parameterized by ``area'' element $\sigma$: $\mathbb{I}_i\to \Upsilon_{\sigma}$ so that
$$
|\Upsilon_{\sigma}-\Upsilon_{\sigma'}|\le C_1\sqrt{|\sigma-\sigma'|}.
$$
Emanating from in invariant curve $\Upsilon_{\sigma}$ the orbits make up an invariant torus $\tilde\Upsilon_{\sigma}\subset\tilde\Pi_i$. We choose a covering space $\pi:\bar M\to M$ so that the lift of $\tilde\Pi_i$ has two components, denoted by $\tilde\Pi_{i,\ell}$ and $\tilde\Pi_{i,r}$. Correspondingly, the lift of those invariant torus also has two components $\tilde\Upsilon_{\ell,\sigma}$ and $\tilde\Upsilon_{r,\sigma}$.

In the configuration space $\mathbb{T}^3$, we choose a 2-dimensional disk $D$ which transversally intersects the backward semi-static curves $\gamma_{\tilde x,\sigma_0}^-:(-\infty,0]\to\bar M$ with $\gamma_{\tilde x,\sigma_0}^-(0)=\tilde x\in D$. These curves approach $\tilde\Upsilon_{l,\sigma_0}$ as $t\to -\infty$. In suitable coordinate system we can assume that $D$ is located in the section
$$
D+d_1=\{(x_1,x_2,x_3):x_1=x_{10},|x_2-x_{20}|\le d+d_1, |x_3-x_{30}|\le d+d_1\}.
$$
Let $D=(D+d_1)|_{d_1=0}$. We write the curve $\gamma^-_{\tilde x_0,\sigma_0}$ in the coordinate form
\begin{equation*}
\gamma^-_{\tilde x_0,\sigma_0}(t)=(x_{10}(t),x_{20}(t),x_{30}(t))
\end{equation*}
where $x_{10}$ is monotonely increases for $t\in [-T,0]$. Since continuous function can be approximated by smooth function, for small $\delta>0$, a tubular neighborhood of the semi-static curve $\gamma_{x_0,\sigma_0}^-|_{[-T,0]}$ admits smooth foliation of curves $\zeta_{x}$: $(x,t)\in (D+d_1)\times[-T,0]\to\mathbb{T}^3$ such that each semi-static curve $\gamma^-_{x,\sigma_0}|_{[-T,0]}$ remains $\delta$-close to $\zeta_{x}$ in the sense that $d(\zeta_x(t),\gamma_{x,\sigma_0}^-(t))<\delta$ for all $t\in[-T,0]$. Here we use $x$ to emphasize that the point is on the disk $D+d_1$.  The tubular neighborhood is defined by the form
$$
\text{\uj C}=\cup_{-T\le t\le 0}\{\zeta_{x}(t):x\in D+d_1\}.
$$

Let $\rho$: $(D+d_1)\times\mathbb{R}\to\mathbb{R}$ be a smooth function such that $\rho(x,t)=\rho(x',t)$, $\rho(x,t)=0$ if $t\notin[-T+t_0,-t_0]$ with small $t_0>0$ and $\rho(x,t)>0$ if $t\in (-T+t_0,-t_0)$. As $\zeta_x$ is a smooth foliation of the tubular domain, it can be thought as a differeomorphism $\Psi$: $(D+d_1)\times [-T,0]\to\text{\uj C}$, namely, for $\tilde x\in\text{\uj C}$ there exists unique $(x,t)\in (D+d_1)\times [-T,0]$ such that $\Psi(x,t)=\zeta_x(t)=\tilde x$. With a smooth function $V$: $D+d_1\to\mathbb{R}$ one obtains a smooth function $\tilde V$ defined on $\text{\uj C}$
\begin{equation}\label{completeeq5}
\tilde V(\tilde x)=\rho(\Psi^{-1}(\tilde x))V(\zeta_{x}(0)).
\end{equation}
By the construction of $\tilde V$, some constant $C_2>0$ exists such that
\begin{equation}\label{completeeq6}
\int_{-T+t_0}^{-t_0}\tilde V(\zeta_{x}(t))dt=C_2V(x), \qquad \forall\ x\in D+d_1.
\end{equation}

The potential perturbation is constructed in the form of (\ref{completeeq5}) where $V$ ranges over the function space spanned by
\begin{align*}
\mathfrak{V}_{2}=&\mu\Big(\sum_{\ell=1,2}a_{\ell}\cos2\ell\pi(x_2-x_{20}) +b_{\ell}\sin2\ell\pi(x_2-x_{20})\Big),\\
\mathfrak{V}_{3}=&\mu\Big(\sum_{\ell=1,2}c_{\ell}\cos2\ell\pi(x_3-x_{30}) +d_{\ell}\sin2\ell\pi(x_3-x_{30})\Big),
\end{align*}
each parameter of $(a_{\ell},b_{\ell},c_{\ell},d_{\ell})$ ranges over an unit interval $[1,2]$. If we construct a grid for the parameters $(a_{\ell},b_{\ell},c_{\ell},d_{\ell})$ by splitting the domain equally into a family of cubes and setting the size length by
$$
\Delta a_{\ell}=\Delta b_{\ell}=\Delta c_{\ell}=\Delta d_{\ell}=\mu,
$$
the grid consists of as many as $[\mu^{-8}]$ cubes.

Let us choose a neighborhood of the parameter $\sigma_0$, denoted by $\mathbb{I}_{\sigma_0}$ which satisfies the conditions:

1, for each $(x,\sigma)$ with $x\in D$, $\sigma\in\mathbb{I}_{\sigma_0}$, there is a unique backward semi-static curve $\gamma^-_{x,\sigma}$ such that $\gamma^-_{x,\sigma}(0)=x$ and $\gamma^-_{x,\sigma}(t)\to\tilde\Upsilon_{l,\sigma}$ as $t\to -\infty$. It is guaranteed by the existence of unstable manifold and if $D$ is suitably close to $\tilde\Upsilon_{l,\sigma_0}$. By the definition, $\gamma_{x,\sigma}^-(t)\in\text{\uj C}$ for $t\in [-T,0]$ and $x\in D$, so each $\sigma\in\mathbb{I}_{\sigma_0}$ defines a linear operator
\begin{equation}\label{completeeq7}
\mathscr{K}_{\sigma}\tilde V=\int_{-T}^0\tilde V(\gamma_{x,\sigma}^-(t))dt;
\end{equation}

2, as each curve $\gamma_{x,\sigma_0}^-(t)$ stays in $\delta$-neighborhood of the fiber $\zeta_{x}$ for $t\in [-T,0]$ with small $\delta>0$, by choosing suitably small neighborhood $\mathbb{I}_{\sigma_0}$ (depending on the size of $D$) some constant $C_3>0$ exists such that
\begin{align}\label{completeeq8}
\text{\rm Osc}_{x\in D}(\mathscr{K}_{\sigma}\tilde V-\mathscr{K}_{\sigma}\tilde V')&=\max_{x,x'\in D}|\mathscr{K}_{\sigma}\tilde V(x)-\mathscr{K}_{\sigma}\tilde V'(x')|\notag\\
&>\frac 12C_2\text{\rm Osc}_{x\in D}(V-V')\\
&>C_3\mu\Delta\notag
\end{align}
with $\Delta=\max\{|a_{\ell}-a'_{\ell}|,|b_{\ell}-b'_{\ell}|,|c_{\ell}-c'_{\ell}|, |d_{\ell}-d'_{\ell}|\}$. Indeed, as $V$ is a linear combination of the functions $\{\sin\ell x_j,\cos\ell x_j:\ell =1,2,j=2,3\}$, there exists some number $d=d(D)>0$ depending on the size of $D$ only such that the Hausdorff distance
$$
d_H\Big(V_{D}^{-1}\Big(\min_DV+\frac 14\Delta\Big),V_{D}^{-1}\Big(\max_DV-\frac 14\Delta\Big)\Big)\ge d(D)
$$
where $V_{D}^{-1}(\min_DV+\frac 14\Delta)=\{x\in D:V(x)\le\min _DV+\frac 14|(\max_DV-\min_DV)|\}$ and $ V_{D}^{-1}(\max_DV-\frac 14\Delta)=\{x\in D:V(x)\ge\max _DV-\frac 14|(\max_DV-\min_DV)|\}$. By requiring $\sigma$ suitably close to $\sigma_0$ and using the notation $\pi_x(x,t)=x$, we have
$$
\pi_x\Psi^{-1}\gamma_{x,\sigma}(t)\in V_{D}^{-1}(\min_DV+\frac 14\Delta)\qquad \text{\rm if}\ V(x)=\min_D V;
$$
and
$$
\pi_x\Psi^{-1}\gamma_{x,\sigma}(t)\in V_{D}^{-1}(\max_DV-\frac 14\Delta)\qquad \text{\rm if}\ V(x)=\max_D V.
$$
Therefore, one obtains (\ref{completeeq8}) from (\ref{completeeq5}), (\ref{completeeq6}) and (\ref{completeeq7});

3, for each $\sigma\in \mathbb{I}_{\sigma_0}$ and each $x\in D$, the forward semi-static curve $\gamma_{x,\sigma}^+$ with $\gamma_{x,\sigma}(0)=x\in D$ does not touch the support of $\rho\subset\text{\uj C}$ and approaches $\Upsilon_{r,\sigma}$ as $t$ increases to infinity.

Let $u^+_{r,\sigma}$ and $u^-_{\ell,\sigma}$ denote the weak KAM solution generating forward semi-static orbits approaching $\tilde\Upsilon_{r,\sigma}$ and backward semi-static orbits approaching $\tilde\Upsilon_{\ell,\sigma}$ respectively. Let $u^+_{r,\sigma,\tilde V}$ and $u^-_{l,\sigma,\tilde V}$ be the weak KAM solutions defined in the same way for the system under potential perturbation $L(\dot x,x)\to L(\dot x,x)+\tilde V(x)$ of \ref{completeeq5}.

Under such potential perturbation, the invariant cylinder remains unchanged. Restricted on the disk $D$, the function $u^+_{r,\sigma,\tilde V}$ is unchanged $(u^+_{r,\sigma,\tilde V}-u^+_{r,\sigma})|_{x\in D}=0$, but the function $u^-_{l,\sigma,\tilde V}$ undergoes small perturbation $u^-_{l,\sigma,\bar V}\neq u^-_{l,\sigma}$. To see how it is related to the potential, let us recall the following relations
$$
u^-_{l,\sigma}(\gamma_{x,\sigma}(0))-u^-_{l,\sigma}(\gamma_{x,\sigma}(-t))= \int_{-t}^0(L-\eta_{c})(d\gamma_{x,\sigma}(t))dt+Et
$$
if $\gamma_{x,\sigma}$ is a semi-static curve determined by $u^-_{l,\sigma}$ with $\gamma_{x,\sigma}(0)=x$ and $c\in I_{\sigma}$. We also have
$$
u^-_{l,\sigma,\tilde V}(\gamma_{x,\sigma}(0))-u^-_{l,\sigma,\tilde V}(\gamma_{x,\sigma}(-t))\le \int_{-t}^0(L+\tilde V-\eta_{c})(d\gamma_{x,\sigma}(t))dt+Et.
$$
Clearly, for suitably large $t$ the backward weak-KAM solution $\gamma_{x,\sigma}(-t)$ shall retreat into a small neighborhood of $\tilde\Upsilon_{l,\sigma}$ where the weak KAM solution $u^-_{l,\sigma}$ also remains unchanged. Therefore we deduce from the last two formulae that
$$
u^-_{l,\sigma,\tilde V}(x)-u^-_{l,\sigma}(x)\ge\int^0_{-T}\tilde V(\gamma_{x,\sigma}(t))dt.
$$
In a similar way, we find
$$
u^-_{l,\sigma,\tilde V}(x)-u^-_{l,\sigma}(x)\le\int^0_{-T}\tilde V(\gamma_{x,\sigma,\tilde V}(t))dt
$$
where $\gamma_{x,\sigma,\tilde V}$ stands for the backward semi-static curve determined by the weak-KAM solution $u^-_{l,\sigma,\tilde V}$ with $\gamma_{x,\sigma,\tilde V}(0)=x$. As $x$ lies in the region where $u^-_{l,\sigma,\tilde V}$ is differentiable, we have $|\gamma_{x,\sigma,\tilde V}(t)-\gamma_{x,\sigma}(t)| \to 0$ as $\tilde V\to 0$, guaranteed by the upper-semi continuity of semi-static curves. Therefore, it follows that for $x\in D$
\begin{align}\label{completeeq9}
u^-_{l,\sigma,\tilde V}(x)-u^-_{l,\sigma,\tilde V'}(x)=&\int_{-T}^0(\tilde V-\tilde V')(\gamma_{x,\sigma,\tilde V}^-(t))dt+o(\|\tilde V-\tilde V'\|),\\
=&(\mathscr{K}_{\sigma}+ \mathscr{R}_{\sigma})(\tilde V-\tilde V')\notag
\end{align}
where the linear operator $\mathscr{K}_{\sigma}$ is defined in (\ref{completeeq7}) and $\mathscr{R}_{\sigma}(\tilde V-\tilde V')=o(\|V-V'\|)$ in the sense of $C^0$-topology.

Next, let us consider all backward weak-KAM solutions for $\sigma\in\mathbb{I}_{\sigma}$.  Each parameter $\sigma\in\mathbb{I}_{\sigma}$ determines an interval $I_{c(\sigma)}$ for cohomology class. We restricted ourselves on a curve of first cohomology classes contained in the set $\cup I_{c(\sigma)}$ and intersecting each $I_{c(\sigma)}$ transversally. In this sense, we think the class defined on the interval $\mathbb{I}_c\ni c$ and the map $\sigma\to c(\sigma)$ is continuous. As  $h^{\infty}_{c(\sigma)}(\tilde x,\tilde x')=u^-_{l,\sigma}(\tilde x')-u^-_{l,\sigma}(\tilde x)$ if $\tilde x\in\tilde\Upsilon_{l,\sigma}$ and $h^{\infty}_{c(\sigma)}(\tilde x,\tilde x')=u^+_{r,\sigma}(\tilde x')-u^+_{r,\sigma}(\tilde x)$ if $\tilde x'\in\tilde\Upsilon_{r,\sigma}$, we obtain from Lemma 6.4 in \cite{CY2}
\begin{equation}\label{completeeq10}
\begin{aligned}
&|u^-_{l,\sigma}(x)-u^-_{l,\sigma'}(x)|\le
C_4(\sqrt{|\sigma-\sigma'|}+|c(\sigma)-c(\sigma')|),\\
&|u^+_{r,\sigma}(x)-u^+_{r,\sigma'}(x)|\le
C_4(\sqrt{|\sigma-\sigma'|}+|c(\sigma)-c(\sigma')|).
\end{aligned}
\end{equation}

We split the interval $\mathbb{I}_{\sigma}$ equally into $K_{\sigma}[\mu^{-2}]$ parts and split the interval $\mathbb{I}_c$ equally into $K_c[\mu^{-1}]$, where
$$
K_{\sigma}=\Big[L_{\sigma}\Big(\frac{12C_4}{C_3}\Big)^2\Big],\qquad
K_c=\Big[L_c\frac{12C_4}{C_3}\Big],
$$
$L_{\sigma}$ and $L_c$ are the length of $\mathbb{I}_{\sigma}$ and of $\mathbb{I}_c$ respectively. The grid over $\mathbb{I}_c\times\mathbb{I}_{\sigma}$ consists of as many as $K_{\sigma}K_c[\mu^{-3}]$ cuboids in which $K_{\sigma},K_c$ are independent of $\mu$. We pick up all cuboids which contain the points $(c,\sigma(c))$ and denote them by $\text{\uj c}_j$ with $j\in\mathbb{J}$, then the cardinality of the set $\mathbb{J}$ is not bigger than $K_{\sigma}K_c[\mu^{-3}]$.

According to the definition, a point $(c_j,\sigma(c_j))\in\text{\uj c}_j$ corresponds to a barrier function $u^-_{l,\sigma_j}-u^+_{r,\sigma_j}$. Let us assume that some parameters $(a_{\ell,j},b_{\ell,j})$ exist such that
$$
\text{\rm Osc}_{x\in D}\min_{x_3}\Big(u^-_{l,\sigma_j}-u^+_{r,\sigma_j}-(\mathscr{K}_{\sigma_j}+ \mathscr{R}_{\sigma_j})\tilde V_j\Big)=0
$$
where $\tilde V_j=\rho\Psi^{-1} V_j$ is defined as in (\ref{completeeq5}) with $V_j\in\mathfrak{V}_2$ determined by the parameters. We consider another perturbation determined by the parameters $(a'_{\ell},b'_{\ell})$
$$
V'=\mu\Big(\sum_{\ell=1,2}a'_{\ell}\cos2\ell \pi(x_2-x_{20})+b'_{\ell}\sin2\ell\pi(x_2-x_{20})\Big)
$$
and set $\tilde V'=\rho\Psi^{-1} V'$. By using the formula (\ref{completeeq9}) we write the identity
\begin{align*}
u^-_{l,\sigma,\tilde V'}-u^+_{r,\sigma,\tilde V'}&=(u^-_{l,\sigma,\tilde V'}-u^-_{l,\sigma_j,\tilde V'})- (u^+_{r,\sigma,\tilde V'}- u^+_{r,\sigma_j,\tilde V'})\\
&+(u^-_{l,\sigma_j}-u^+_{r,\sigma_j})-(\mathscr{K}_{\sigma_j}+\mathscr{R}_{\sigma_j})\tilde V_j\\
&+(\mathscr{K}_{\sigma_j}+\mathscr{R}_{\sigma_j})(\tilde V_j-\tilde V').
\end{align*}
For any point $(c,\sigma(c))\in\text{\uj c}_j$, in virtue of the formulae in (\ref{completeeq10}) the first term on the right-hand-side of the identity is not bigger than $C_3\mu^2/3$. For small $\|\tilde V_j-\tilde V'\|$ we have $\|(\mathscr{K}_{\sigma_j}+\mathscr{R}_{\sigma_j})(\tilde V_j-\tilde V')\|<\frac 13\|\mathscr{K}_{\sigma_j}(\tilde V_j-\tilde V')\|$. As both $V'$ and $V_j$ are independent of $x_3$, if the parameters $(a'_{\ell},b'_{\ell})$ satisfy
$$
\max\{|a_{\ell,j}-a'_{\ell}|,|b_{\ell,j}-b'_{\ell}|\}\ge\mu
$$
we find from above identities and the estimate (\ref{completeeq8}) that
\begin{equation}\label{completeeq11}
\text{\rm Osc}_{x\in D}\min_{x_3}\Big(u^-_{l,\sigma}-u^+_{u,\sigma}-(\mathscr{K}_{\sigma}+ \mathscr{R}_{\sigma})\tilde V'\Big)\ge \frac 13C_3\mu^2>0.
\end{equation}
It implies that, for each small rectangle $\text{\uj c}_j$ we only need to cancel out at most $2^4$ $\mu$-cubes from the grid for $\{\Delta a_{\ell},\Delta b_{\ell}:\ell=1,2\}$ so that the formula (\ref{completeeq11}) holds for the all other cubes. Let $j$ ranges over the set $\mathbb{J}$, we obtain a set $\text{\uj S}^c_2\subset\{a_{\ell}\in [1,2],b_{\ell}\in[1,2]:\ell=1,2\}$ with Lebesgue measure
$$
\text{\rm meas}\text{\uj S}^c_2\ge 1-2^4K_{\sigma}K_c\mu,
$$
such that the formula (\ref{completeeq11}) holds for each $(a'_{\ell},b'_{\ell})\in\text{\uj S}^c_2$ and for each $\sigma\in\mathbb{I}_{\sigma_0}$.

By considering $V'\in\mathfrak{V}_3$, in the same way we can see that some set $\text{\uj S}^c_3\subset\{c_{\ell}\in [1,2],d_{\ell}\in[1,2]:\ell=1,2\}$ with Lebesgue measure
$$
\text{\rm meas}\text{\uj S}^c_3\ge 1-2^4K_{\sigma}K_c\mu,
$$
such that the formula
\begin{equation}\label{completeeq12}
\text{\rm Osc}_{x\in D}\min_{x_2}\Big(u^-_{l,\sigma}-u^+_{u,\sigma}-(\mathscr{K}_{\sigma}+ \mathscr{R}_{\sigma})\tilde V'\Big)>0
\end{equation}
for each $(c'_{\ell},c'_{\ell})\in\text{\uj S}^c_3$ and each $\sigma\in\mathbb{I}_{\sigma_0}$.

Therefore, for each $(a_{\ell},b_{\ell},c_{\ell},d_{\ell},)\in\text{\uj S}^c_2\times\text{\uj S}^c_3$, the formulae (\ref{completeeq11}) and (\ref{completeeq12}) implies that for all $\sigma\in\mathbb{I}_{\sigma_0}$ the diameter of each connected component of the set
$$
\arg\min(u^-_{l,\sigma,\tilde V}-u^+_{r,\sigma,\tilde V})|_D
$$
is smaller than $D$. As $\mu>0$ can be arbitrarily small, for each disk $D$, an open-dense set $\mathfrak{V}_D$ exists such that this disconnect property holds for the system $L+\tilde V$ with $\tilde V\in\mathfrak{V}_D$. Since $\sigma$ is restricted on a closed set in the line which can be covered by finitely many $\mathbb{I}_{\sigma_i}$, this property is also open-sense for all $\sigma$ under our consideration.

Each section $D$ admits a hierachy of partition of small disks $D=\cup_jD_{kj}$ such that the size $D_{jk}$ approaches zero as $k\to\infty$, the intersection $\cap_k\mathfrak{V}_{D_{kj}}$ is a residual set. Therefore, we have proved
\begin{lem}\label{chainthm1}
There exists a set $\mathfrak{P}$ residual in $\mathfrak{P}_0$ such that for each $\epsilon P(q)\in\mathfrak{P}$ the set
$$
\arg\min(u^-_{l,\sigma}-u^+_{r,\sigma})\backslash((\Upsilon_{l,\sigma}\cup\Upsilon_{r,\sigma})+\delta)
$$
consists of totally disconnected semi-static curves.
\end{lem}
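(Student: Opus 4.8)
The plan is to assemble the quantitative estimates built up in \S4.3 into a Baire--category argument over the perturbation space $\mathfrak{P}_0$. First I would fix one backward semi-static curve $\gamma^-_{x_0,\sigma_0}$, its tubular neighborhood carrying the smooth foliation $\zeta_x$ and the diffeomorphism $\Psi$, and a codimension-one transverse disk $D$ on which it ends, and then work with the finite-dimensional family of potential perturbations $\tilde V=\rho\,\Psi^{-1}V$, $V\in\mathfrak{V}_2\oplus\mathfrak{V}_3$, of the form (\ref{completeeq5}). The mechanism is formula (\ref{completeeq9}): on $D$ the forward weak KAM solution $u^+_{r,\sigma}$ is left unchanged, while $u^-_{l,\sigma,\tilde V}-u^-_{l,\sigma}=(\mathscr{K}_\sigma+\mathscr{R}_\sigma)\tilde V$ with $\mathscr{R}_\sigma\tilde V=o(\|V\|)$; thus the barrier function $u^-_{l,\sigma}-u^+_{r,\sigma}$ restricted to $D$ is translated essentially affinely by the perturbation, the ``direction'' of the translation being nondegenerate because of the oscillation lower bound (\ref{completeeq8}) for $\mathscr{K}_\sigma$.

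Next I would run the grid-counting argument. Since $\sigma$ and the transverse cohomology coordinate $c(\sigma)$ enter the barrier function only with $\frac12$-H\"older modulus --- this is (\ref{holder}) together with (\ref{completeeq10}) --- a $\sqrt\mu$-scaled partition of $\mathbb{I}_\sigma$ and a $\mu$-scaled partition of $\mathbb{I}_c$ produce only $O(\mu^{-3})$ cuboids meeting the curve $\sigma\mapsto(c(\sigma),\sigma)$, whereas the perturbation parameters $(a_\ell,b_\ell,c_\ell,d_\ell)$ are discretized on a $\mu$-grid of $[\mu^{-8}]$ cubes. For each such cuboid I would split $u^-_{l,\sigma,\tilde V'}-u^+_{r,\sigma,\tilde V'}$ into a H\"older-controlled $\sigma$-variation term of size at most $\frac13C_3\mu^2$, the reference quantity $u^-_{l,\sigma_j}-u^+_{r,\sigma_j}-(\mathscr{K}_{\sigma_j}+\mathscr{R}_{\sigma_j})\tilde V_j$, which for a suitable ``bad'' parameter has zero oscillation over $D$ after minimizing in the transverse variable, and the main linear term $(\mathscr{K}_{\sigma_j}+\mathscr{R}_{\sigma_j})(\tilde V_j-\tilde V')$; moving the parameters by at least $\mu$ away from that bad choice then forces $\mathrm{Osc}_{x\in D}\min_{x_3}(\cdots)\ge\frac13C_3\mu^2>0$ and symmetrically $\min_{x_2}(\cdots)>0$, i.e.\ (\ref{completeeq11}) and (\ref{completeeq12}). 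Summing over the $O(\mu^{-3})$ cuboids discards only $2^4$ cubes per cuboid, so the admissible parameters fill a set of measure $\ge 1-2^4K_\sigma K_c\mu$; letting $\mu\downarrow0$ and covering the closed, bounded $\sigma$-range by finitely many $\mathbb{I}_{\sigma_i}$ gives an open--dense set of perturbations $\mathfrak{V}_D$ for which, uniformly in $\sigma$, every connected component of $\arg\min(u^-_{l,\sigma,\tilde V}-u^+_{r,\sigma,\tilde V})|_D$ has diameter strictly less than $\mathrm{diam}\,D$.

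The conclusion is then Baire category. For the section $D$ choose a nested hierarchy of partitions $D=\cup_jD_{kj}$ with $\mathrm{diam}\,D_{kj}\to0$ as $k\to\infty$; each $\mathfrak{V}_{D_{kj}}$ is open--dense, hence the countable intersection $\bigcap_{k,j}\mathfrak{V}_{D_{kj}}$ is residual, and for a perturbation in it every connected component of the argmin set meeting $D$ reduces to a single point, i.e.\ a point of a semi-static curve. Repeating the construction over a countable family of transverse sections $D$ covering the complement of $(\Upsilon_{l,\sigma}\cup\Upsilon_{r,\sigma})+\delta$, and intersecting the resulting residual subsets of $\mathfrak{P}_0$, yields the desired residual set $\mathfrak{P}$: for $\epsilon P(q)\in\mathfrak{P}$ the set $\arg\min(u^-_{l,\sigma}-u^+_{r,\sigma})\setminus((\Upsilon_{l,\sigma}\cup\Upsilon_{r,\sigma})+\delta)$ consists of totally disconnected semi-static curves.

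The step I expect to be the main obstacle is precisely the one already carried out in the preceding pages: making the two scales compatible. One must know a priori that the whole family $\{u^-_{l,\sigma}-u^+_{r,\sigma}\}_\sigma$ has ``finite box dimension'', i.e.\ the $\frac12$-H\"older estimate (\ref{holder}), which rests on the area parametrization of invariant circles on the normally hyperbolic cylinder together with the hyperbolicity bounds imported from \cite{C15} and \cite{CY1}; only then are there merely polynomially many (in $\mu^{-1}$) cuboids to treat, while the $\mu^{-8}$-cube perturbation family is large enough to kill all of them on a uniformly small per-cuboid budget. Keeping the linearization error $\mathscr{R}_\sigma=o(\|V\|)$ genuinely subordinate to the main term $\mathscr{K}_\sigma$ throughout this count, and ensuring that the perturbations $\tilde V$ disturb neither $u^+_{r,\sigma}$ nor the cylinder itself, is the delicate bookkeeping.
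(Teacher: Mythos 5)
Your proposal is correct and follows essentially the same route as the paper's own proof: the perturbation family $\tilde V=\rho\,\Psi^{-1}V$ acting affinely on $u^-_{l,\sigma}$ through $\mathscr{K}_\sigma+\mathscr{R}_\sigma$, the oscillation lower bound, the $O(\mu^{-3})$-cuboid versus $[\mu^{-8}]$-cube counting enabled by the $\frac12$-H\"older dependence on $\sigma$, and the final Baire-category intersection over a shrinking hierarchy of disks. The only slip is cosmetic: the paper partitions $\mathbb{I}_\sigma$ into cells of size $\mu^2$ (so that $\sqrt{|\sigma-\sigma'|}\le\mu$ within a cell), not ``$\sqrt\mu$-scaled'' cells, but your cuboid count and the role you assign to the H\"older estimate are the correct ones.
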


This lemma verifies the existence of generalized transition chain connecting any two class $\tilde c,\tilde c'\in\tilde{\mathbb{W}}_{k',\tilde E}\backslash\cup(\tilde{\mathbb{F}}_{i,\tilde E}+\epsilon^{\frac{1+d}2})$. As cohomology equivalence has been established along circles around the disk $\tilde{\mathbb{F}}_{i,\tilde E}$ which make up an annulus with width $O(\epsilon)\gg\epsilon^{\frac{1+d}2}$,  we obtains a generalized transition chain along the whole resonant channel $\tilde{\mathbb{W}}_{k',\tilde E}$. This completes the proof of Theorem \ref{mainresult2}.

\noindent{\bf Acknowledgement} The main content of this paper comes from the previous preprint \cite{C13} which seems quite long. The author splits it into several parts with modifications so that they are easier to read. This is one of them. The author is greatly appreciated for the discussion with Jinxin Xue.

This work is supported by National Basic Research Program of China (973 Program, 2013CB834100), NNSF of China (Grant 11171146) and a program PAPD of Jiangsu Province, China.

\end{document}